\pgfplotsset{compat=1.13}
\theoremstyle{plain}
\newtheorem{toptheorem}{Theorem}
\newtheorem{topcorollary}[toptheorem]{Corollary}
\newtheorem{theorem}{Theorem}
\newtheorem{proposition}[theorem]{Proposition}
\newtheorem{corollary}{Corollary}
\numberwithin{corollary}{theorem}
\newtheorem{lemma}{Lemma}
\theoremstyle{definition}
\newtheorem{definition}[theorem]{Definition}
\newtheorem{example}[theorem]{Example}
\theoremstyle{remark}
\newtheorem{warning}[theorem]{Warning}
\newtheorem{remark}[theorem]{Remark}
\numberwithin{theorem}{section}
\numberwithin{equation}{section}
\numberwithin{lemma}{theorem}
\def\overnorm#1{\overline{#1}\vphantom{#1}}
\def\undernorm#1{\underline{#1}\vphantom{#1}}
\mathchardef\mhyphen="2D
\def\Hom{\operatorname{Hom}}
\def\ctM{{}^{\mathrm{ct}}\kern-2pt\mathcal M}
\def\Div{\mathbf{Div}}
\def\ctDiv{{}^{\mathrm{ct}}\Div}
\def\Pic{\mathbf{Pic}}
\def\ctPic{{}^{\mathrm{ct}}\Pic}
\def\Rub{\mathbf{Rub}}
\def\Gm{\mathbf{G}_m}
\def\BGm{{\mathrm{B}\mathbf{G}_m}}
\def\DR{\mathsf{DR}}
\def\Log{\mathbf{Log}}
\def\LogSch{\mathbf{LogSch}}
\def\Mlog{\mathfrak M^{\log}}
\def\can{\rm can}
\def\kcan{k\mhyphen\can}
\def\logGm{\Gm^{\log}}
\def\tropGm{\Gm^{\rm trop}}
\def\Mss{\mathfrak M^{\rm ss}}
\def\stab{\mathrm{stab}}
\def\vir{\mathrm{vir}}
\def\aj{\mathsf{aj}}
\def\Spec{\operatorname{Spec}}
\def\ord{\operatorname{ord}}
\def\NS{\operatorname{NS}}
\begin{document}

%\author[Marcus and Wise]{Steffen Marcus and Jonathan Wise}
\date{\today}
\title{Logarithmic compactification of the Abel--Jacobi section}\

\author{Steffen Marcus}
\address[Marcus]{Mathematics and Statistics\\
The College of New Jersey\\
Ewing, NJ 08628\\
USA}
\email{marcuss@tcnj.edu}

\author{Jonathan Wise}
\address[Wise]{University of Colorado, Boulder\\
Boulder, Colorado 80309-0395\\ USA}
\email{jonathan.wise@math.colorado.edu}

\begin{abstract} 
Given a smooth curve with weighted marked points, the Abel--Jacboi map produces a line bundle on the curve.  This map fails to extend to the full boundary of the moduli space of stable pointed curves.  Using logarithmic and tropical geometry, we describe a modular modification of the moduli space of curves over which the Abel--Jacobi map extends.  We also describe the attendant deformation theory and virtual fundamental class of this moduli space.  This recovers the double ramification cycle, as well as variants associated to differentials.
\end{abstract}

\keywords{Abel--Jacobi map, logarithmic geometry, tropical geometry, moduli of curves, divisors}
\subjclass[2010]{14D20, 14D23, 14C17, 14T05}

\maketitle

\setcounter{tocdepth}{1}
\tableofcontents

\section{Introduction}

Let $C$ be a smooth curve containing distinct marked points $x_1, \ldots, x_n$.  For any vector $\mathbf a = (a_1, \ldots, a_n)$ of integers, we obtain an invertible sheaf $\mathcal O_C(\sum a_i x_i)$ on $C$.  This construction behaves well in families, and gives a section
\begin{equation*}
\mathcal M_{g,n} \to \Pic_{g,n}
\end{equation*}
where $\Pic_{g,n}$ is the universal Picard group.  We call this map the \emph{Abel--Jacobi section}.

This section fails to extend over $\overline{\mathcal M}_{g,n}$.  As we will recall in Section~\ref{sec:ct}, it does extend to compact type (and even a bit further, though we do not discuss this), but we demonstrate by example in Section~\ref{sec:indeterminacy} that it does not extend near a point corresponding to a curve with two components joined at two nodes.

Our main contribution is the observation that, in compact type, the integer weighting $\mathbf a$ on the marked points of a stable curve $C$ corresponds to an equivalence class of piecewise linear functions on the tropicalization of $C$.  This notion makes sense on the boundary of $\overnorm{\mathcal M}_{g,n}$, and  in Sections~\ref{sec:trop-lines} and \ref{sec:logdiv} we construct a moduli space $\Div_{g,\mathbf a}$ parameterizing stable curves with the choice of such a function.  We show that this additional datum produces a line bundle on $C$ and that this construction resolves the indeterminacy of the Abel--Jacobi section.

\begin{toptheorem} \label{thm:div}
The Abel--Jacobi section extends to a closed embedding $\aj : \Div_{g,\mathbf a} \to \Pic_{g,n}$ over $\overline{\mathcal M}_{g,n}$.
\end{toptheorem}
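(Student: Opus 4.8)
The plan is to construct the morphism $\aj$ directly from the modular description of $\Div_{g,\mathbf a}$ and then recognize it as a closed embedding by exhibiting it as a proper monomorphism over $\overline{\mathcal M}_{g,n}$.

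First I would make the line bundle construction functorial. A point of $\Div_{g,\mathbf a}$ over a logarithmic scheme $S$ is a logarithmically smooth curve $C/S$ with its markings, together with a piecewise linear function $\alpha$ on the tropicalization whose slopes along the legs are the $a_i$. Interpreting $\alpha$ as a $\logGm$-torsor (a logarithmic line bundle) on $C$ and using the integrality of its slopes to descend to an honest line bundle, I would produce $L$ equal to $\mathcal O_C(\sum a_i x_i)$ twisted by the vertical divisor cut out by $\alpha$, normalized so that $L$ has multidegree $0$ on every component. This defines the map to $\Pic_{g,n}$. Over $\mathcal M_{g,n}$ the only admissible $\alpha$ is the trivial function and $L = \mathcal O_C(\sum a_i x_i)$, so $\aj$ restricts to the Abel--Jacobi section, as required.

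Next comes properness. Because $\Div_{g,\mathbf a}$ is a logarithmic modification of $\overline{\mathcal M}_{g,n}$, the structure morphism $\Div_{g,\mathbf a} \to \overline{\mathcal M}_{g,n}$ is proper, whereas $\Pic_{g,n} \to \overline{\mathcal M}_{g,n}$ is separated. Since $\aj$ is a morphism over $\overline{\mathcal M}_{g,n}$ from a space proper over the base to one separated over the base, $\aj$ is automatically proper. Should one prefer to argue directly, the valuative criterion reduces to extending $\alpha$ across a trait once $L$ is known to extend: the completeness of the subdivision defining $\Div_{g,\mathbf a}$ supplies existence, and the non-overlapping of its cones supplies uniqueness.

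The main work is to show that $\aj$ is a monomorphism; together with properness and representability this yields a closed immersion, since a proper representable monomorphism is a closed embedding. Concretely, I must show that for every log scheme $S$ the datum of $L$ on $C/S$ recovers $\alpha$ uniquely, subject to the equivalence already imposed in the definition of $\Div_{g,\mathbf a}$. The idea is that the way the multidegree-$0$ structure of $L$ is distributed across the components, read off from the logarithmic structure near the nodes, pins down the slopes and breaks of $\alpha$, while the residual ambiguity is exactly the global linear functions that $\Div_{g,\mathbf a}$ quotients away. I expect this injectivity to be the hard part: it should be deduced not merely at the level of points but as unramifiedness, using the deformation-theoretic comparison of $\Div_{g,\mathbf a}$ and $\Pic_{g,n}$, so that $\aj$ is injective on tangent vectors and an isomorphism on automorphism groups. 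Granting this, $\aj$ is a proper monomorphism and hence the desired closed embedding.
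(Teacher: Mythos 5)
Your properness argument is the critical gap, and it fails because both of its inputs are false. First, $\Div_{g,\mathbf a} \to \overline{\mathcal M}_{g,n}$ is \emph{not} proper: the paper proves it is logarithmically \'etale and birational (Corollary~\ref{cor:divbirational}), but it is not even quasi-compact. Over the stable curve of Section~\ref{sec:indeterminacy} --- two components joined at two nodes, with smoothing parameters $\delta_1,\delta_2$ --- every pair of positive integers $(m_1,m_2)$ yields a distinct point of $\Div_{g,\mathbf a}$ with the prescribed leg slopes $\mathbf a$ but edge slopes $m_1,m_2$, whose minimal characteristic monoid is the image of $\mathbf N^2$ in $\mathbf Z^2/\langle m_1\delta_1-m_2\delta_2\rangle$; the fiber of $\Div_{g,\mathbf a}$ over that point of $\overline{\mathcal M}_{g,n}$ is therefore infinite and discrete. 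Second, $\Pic_{g,n} \to \overline{\mathcal M}_{g,n}$ is \emph{not} separated: a line bundle on the generic fiber of a curve over a trait extends in many ways, differing by twists by components of the special fiber, and this non-separatedness is precisely the source of the indeterminacy analyzed in Section~\ref{sec:indeterminacy}. So properness of $\aj$ cannot be inherited from the base; it is irreducibly a statement \emph{relative to} $\Pic_{g,n}$, in which the limiting line bundle $L$ is an input. The paper's valuative-criterion argument lifts $\alpha_\eta$ to a global section of $\overnorm M_C^{\rm gp}$ --- the obstruction lives in $H_1(\Gamma,\cartier)$, which vanishes because $\cartier$, the sheaf of vertical Cartier divisors, is freely generated by the vertices of $\Gamma$ --- and then corrects $\alpha$ by a vertical Cartier divisor so that $\aj(\alpha)=L$ exactly; quasi-compactness requires a further genuine argument (finiteness of slope configurations with fixed multidegree, then the Seesaw theorem). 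Your fallback appeal to ``completeness of the subdivision'' cannot replace this: $\Div_{g,\mathbf a}$ is not a complete subdivision of anything, and without using $L$ there is nothing to select a limit from among the infinitely many points exhibited above.

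The monomorphism step is likewise a plan rather than a proof. That ``the multidegree pins down the slopes and breaks of $\alpha$'' is exactly what must be shown, and it is not formal: on a dual graph with cycles, the multidegree of $\mathcal O_C(\alpha)$ only records the sum of outgoing slopes at each vertex, hence determines $\alpha$ only up to a harmonic function. The paper closes this gap in Lemma~\ref{lem:monomorphism}: if $\aj(\alpha)=\aj(\beta)$, then $\alpha-\beta$ is harmonic, and it is bounded because $\alpha$ and $\beta$ have the same slopes on the legs, so it is constant. Finally, a smaller misreading in your construction of the map: you normalize $L$ to have multidegree $0$, but the theorem concerns the full stack $\Pic_{g,n}$ and the image of $\aj$ is not contained in the multidegree-$0$ locus --- the multidegree of $\aj(\alpha)$ is dictated by the failure of linearity of $\alpha$ at the vertices, and multidegree $0$ is in general not achievable at all (the odd-weight example of Section~\ref{sec:indeterminacy}).
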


The Abel--Jacobi \emph{map}, as opposed to the Abel--Jacobi \emph{section}, sometimes refers to the map $\mathcal M_{g,\mathbf a} \to \Pic_g$, that takes in a smooth curve of genus~$g$ with~$n$ distinct, weighted markings and outputs the associated line bundle, \emph{but forgets the location of the markings}.  Unlike the Abel--Jacobi section that is the focus of this paper, this Abel--Jacobi map is usually not an embedding.  Our methods, along with some technical bookkeeping, also resolve the indeterminacy of the Abel--Jacobi map, as we indicate in Section~\ref{sec:aj-map}.  It would be very interesting to understand the fibers of this map, but we make no attempt to do so here.

Theorem~\ref{thm:div} is proven in Section~\ref{sec:logdiv} where it appears as Theorem~\ref{thm:proper}. In Section~\ref{sec:def}, we show that $\Div_{g,\mathbf a}$ is logarithmically \'etale over $\overline{\mathcal M}_{g,n}$, from which it follows that $\Div_{g,\mathbf a}$ has a logarithmic perfect relative obstruction theory and hence a virtual fundamental class:

\begin{toptheorem} \label{thm:vfc}
The Abel--Jacobi section has a logarithmic perfect relative obstruction theory whose obstruction bundle is the dual of the Hodge bundle.
\end{toptheorem}

\subsection*{Rubber targets}

This work grew out of an attempt to understand the moduli space of stable maps from curves to so-called ``rubber'' targets and its virtual fundamental class, and generalizes our earlier work on that problem over the rational tails locus~\cite{CMW} and compact type locus~\cite{MW}.  This space parameterizes maps from prestable curves into unparameterized rational targets.  It was compactified, and equipped with a virtual fundamental class, by Graber and Vakil~\cite{GV}, based on earlier work of Li~\cite{Li1,Li2}, by expanding the rational target in families. 

Section~\ref{sec:rubber} is devoted to comparing our approach to Graber and Vakil's.  We define $\Rub_{g,\mathbf a}$ to be the subfunctor of $\Div_{g,\mathbf a}$ parameterizing those piecewise linear functions on tropicalizations of prestable curves whose values are \emph{totally ordered} in a naturally defined partial order.

\begin{toptheorem}\label{thm:rubbervfc}
The stack $\Rub_{g,\mathbf a}$ is a logarithmic modification of $\Div_{g,\mathbf a}$ and its base change under the trivial section of $\Pic_{g,n}$ over $\overline{\mathcal M}_{g,n}$ is the moduli space of stable maps to rubber targets.  Under this identification, the virtual fundamental classes coincide.
\end{toptheorem}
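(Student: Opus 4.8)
The plan is to establish the three assertions in turn, treating the identification of spaces before the comparison of virtual classes.

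For the first assertion, I would work with the tropical (cone-complex) descriptions underlying $\Div_{g,\mathbf a}$ and $\Rub_{g,\mathbf a}$. A point of the cone over a fixed dual graph records a metric together with a piecewise linear function $f$ having the prescribed slopes $\mathbf a$ at the legs, and the values $f(v)$ at the vertices vary linearly over the cone. The condition defining $\Rub_{g,\mathbf a}$ — that these values be totally ordered — is not closed on such a cone but instead cuts it into chambers according to which vertex is assigned which level, the walls being the hyperplanes $f(v) = f(w)$. I would check that these chamber decompositions are compatible under the face maps of the cone complex, so that together they assemble into a genuine subdivision of the tropicalization of $\Div_{g,\mathbf a}$. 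Since subdivisions of the cone stack (equivalently, of the Artin fan) correspond to logarithmic modifications, and since the subdivision here refines without enlarging the support and is an isomorphism over the locus where $f$ is trivial, this exhibits $\Rub_{g,\mathbf a} \to \Div_{g,\mathbf a}$ as a proper, representable, logarithmically \'etale birational morphism, that is, a logarithmic modification.

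For the second assertion, I would pass through the equivalence between Li's expanded degenerations and logarithmic maps. Base changing $\aj$ along the trivial section extracts the locus of $\Div_{g,\mathbf a}$ on which the Abel--Jacobi line bundle is trivialized up to scaling — equivalently, on which the divisor $\sum a_i x_i$ becomes principal — which is precisely the data of a logarithmic rubber map with ramification $\mathbf a$. Imposing the totally ordered condition of $\Rub_{g,\mathbf a}$ forces the tropical target to be one-dimensional: the distinct values of $f$, being totally ordered, are the levels of an expansion into a chain of $\mathbf P^1$'s, and the vertex values of $f$ record the assignment of components to levels. I would therefore construct mutually inverse morphisms between the base change of $\Rub_{g,\mathbf a}$ and the Graber--Vakil rubber space, matching predeformability with the slope conditions along edges and matching the $\Gm$-rigidification on the two sides. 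The technical input is the known dictionary between predeformable expanded maps and logarithmic stable maps, specialized here to the rubber setting.

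For the third assertion the two spaces now agree, and I must match the virtual classes. The logarithmic perfect obstruction theory of Theorem~\ref{thm:vfc} has obstruction space $H^1(C, \mathcal O_C)$, which coincides with the obstruction space of Graber and Vakil's rubber theory, since deformations and obstructions of a map to the rigidified, unparameterized target reduce to those of the underlying line bundle. The plan is to show that under the identification of the second assertion the two obstruction theories agree in the derived category, compatibly with the isomorphism of spaces; the virtual classes then coincide by construction. Here I would use that $\Rub_{g,\mathbf a} \to \Div_{g,\mathbf a}$ is logarithmically \'etale, so that the logarithmic cotangent complex — hence the obstruction theory — pulls back, and that forming the base change along the trivial section is compatible with the obstruction theory. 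The hard part will be reconciling the two obstruction theories rather than merely the two obstruction spaces: the logarithmic theory is built from the Abel--Jacobi/$\Div$ construction while Graber and Vakil's is built from expansions, so I expect the main obstacle to be tracking how the logarithmic modification and the base change interact with each theory and checking that the comparison isomorphism is canonical. I would anchor the argument on the smooth interior, where both reduce to the classical Abel--Jacobi deformation theory, and propagate the agreement to the boundary using properness together with the compatibilities just described, consistent with the partial comparisons already obtained over the rational tails and compact type loci in \cite{CMW,MW}.
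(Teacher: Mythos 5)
Your first two assertions are in reasonable shape. For the logarithmic modification, your chamber-decomposition argument is essentially the paper's proof of Theorem~\ref{thm:rublogmod}: there the subdivision is produced locally by pulling back toric blowups of $\mathbf A^2$ along $(\tilde\alpha(v),\tilde\alpha(w))$, one incomparable pair of vertices at a time, which is exactly the subdivision along the hyperplanes $\alpha(v)=\alpha(w)$ that you describe. For the identification with rubber maps you take a genuinely different route: the paper does not invoke an external dictionary between predeformable expanded maps and logarithmic maps, but constructs the expanded target internally --- from $(C,\alpha)\in\Rub(S)$ it builds the divided tropical line $\mathcal Q$, the family $R$ of $2$-marked semistable rational curves, and the section over the semistable model $\tilde C$, with the key descent step (Proposition~\ref{prop:rub-stab}) resting on the fact that $M_C^{\rm gp}\to\tau_\ast M_{\tilde C}^{\rm gp}$ is an isomorphism when $\tau$ contracts rational components. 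Your appeal to the comparison literature is a legitimate shortcut, at the cost of verifying that the rubber ($\Gm$-rigidified) case is actually covered by the results you would cite.

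The genuine gap is in the virtual class comparison. Your plan is to show the two obstruction theories ``agree in the derived category'' and to propagate this agreement from the smooth interior to the boundary by properness. Neither half works. Graber--Vakil's theory is a two-step, \emph{schematic} (non-logarithmic) obstruction theory relative to $\Mlog_{g,n}\times\mathfrak M_{0,2}^{\rm ss}$, with primary obstruction in $H^0\bigl(C, f^{-1}\mathit{Ext}^1(\Omega_{\undernorm Q}(\log D),\mathcal O_Q)\bigr)$ (deformations of the nodes of the target) and secondary obstruction in $H^1(C,f^\dagger T_Q)$; neither group is $H^1(C,\mathcal O_C)$, and the paper says explicitly that the filtered pieces of its own theory are \emph{not} the same as Graber--Vakil's --- the two theories only admit a common refinement. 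So there is no isomorphism of obstruction theories to exhibit; what the paper proves instead is a chain of identifications of virtual classes: (i) Manolache's virtual pullback theorem applied to diagram~\eqref{eqn:21}, passing from the Abel--Jacobi theory of Theorem~\ref{thm:vfc} to a theory relative to $\Rub_{g,\mathbf a}$ with obstruction bundle $R^1\pi_\ast\mathcal O_C$ (Section~\ref{sec:second-obs}); (ii) a factorization of Graber--Vakil's non-strict schematic morphism into a logarithmic morphism followed by a change of logarithmic structure (Section~\ref{sec:filtration}), the latter a birational modification whose obstruction complex is given in Proposition~\ref{prop:gv-obs1}; and (iii) the exact sequence $0\to f^\ast T_Q\to f^\dagger T_Q\to\prod\mathcal O_e\to 0$ of Proposition~\ref{prop:TQ}, which matches the resulting pieces against Graber--Vakil's primary and secondary obstructions.

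Moreover, ``anchor on the interior and propagate by properness'' is not a valid mechanism: an obstruction theory, and a fortiori a virtual fundamental class, is not determined by its restriction to a dense open substack. Two theories agreeing generically can produce classes differing by cycles supported on the boundary, and in a double-ramification-type statement the boundary contributions are the entire content. Properness supplies no propagation principle for identifications of obstruction theories, so without steps (i)--(iii), or a substitute for them, your third assertion remains unproved.
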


Both $\Rub_{g,\mathbf a}$ and $\Div_{g,\mathbf a}$ resolve the indeterminacy of the Abel--Jacobi section, but $\Div_{g,\mathbf a}$ is the more efficient of the two, requiring less modification of $\overline{\mathcal M}_{g,n}$.

\subsection*{The double ramification cycle}

One particularly notable application of stable maps to rubber targets has been to complete the double ramification cycle.  As before, let $\mathbf a$ be a set of integer weights whose sum is zero, and let $\DR_{g,\mathbf a}$ be the locus in $\mathcal M_{g,n}$ consisting of those $(C, \mathbf x)$ such that $\mathcal O_C(\sum a_i x_i)$ is isomorphic to $\mathcal O_C$.  In other words, $\DR_{g,\mathbf a}$ is the pullback of the zero locus $\mathcal M_{g,n} \to \Pic_{g,n}$ of $\Pic_{g,n}$ along the Abel--Jacobi section $\aj : \mathcal M_{g,n} \to \Pic_{g,n}$.

The locus $\DR_{g,\mathbf a}$ can also be seen as the collection of marked curves $(C, \mathbf x)$ such that there is a map $f : C \to \mathbf P^1$ with $f^{-1} [\infty] - f^{-1} [0] = \sum a_i x_i$.  Its associated cycle (as a refined intersection) is therefore called the \emph{double ramification cycle}.

Theorem~\ref{thm:div} says that $\aj$ extends to a closed embedding $\Div_{g,\mathbf a} \to \Pic_{g,n}$, and therefore gives a naturally defined extension of $\DR_{g,\mathbf a}$ to a proper morphism:
\begin{equation*}
\Div_{g,\mathbf a}(\mathcal O) = \Div_{g,\mathbf a} \mathop\times_{\Pic_{g,n}} \overnorm{\mathcal M}_{g,n} \to \overnorm{\mathcal M}_{g,n}
\end{equation*}
Theorem~\ref{thm:vfc} gives $\Div_{g,\mathbf a}(\mathcal O)$ a virtual fundamental class in dimension $2g - 3 + n$.

This is not the only way to complete the double ramification cycle, and the original approach relies on the interpretation of $\DR_{g,\mathbf a}$ in terms of stable maps to rational curves.  If $C \to \mathbf P^1$ degenerates so that a component of $C$ maps either to $0$ or to $\infty$, the divisor $f^{-1} [\infty] - f^{-1} [0]$ ceases to have any meaning.  To prevent this from happening, one works in $\Rub_{g,\mathbf a}$ and allows the target $\mathbf P^1$ to expand.  Pulling back $\Rub_{g,n} \mathop\times_{\Pic_{g,n}} \overnorm{\mathcal M}_{g,n}$ yields a second compactification of the double ramification locus, $\Rub_{g,\mathbf a}(\mathcal O)$, and pushing forward its virtual fundamental class gives a second completion of the double ramification cycle.  

\begin{topcorollary}
The two completions $[\Div_{g,\mathbf a}(\mathcal O)]^{\rm vir}$ and $[\Rub_{g,\mathbf a}(\mathcal O)]^{\rm vir}$ of the double ramification cycle coincide.
\end{topcorollary}

Since the two cycle classes coincide, we introduce $[\overnorm\DR_{g,\mathbf a}]$ for either and call it the \emph{(completed) double ramification cycle}.

The double ramification cycle has received considerable attention recently, culminating in the proof of an explicit formula for $[\overnorm\DR_{g,\mathbf a}]$ by Janda, Pandharipande, Pixton, and Zvonkine~\cite{Janda2017} (the formula was previously conjectured by Pixton~\cite{FarkasP}) and attendant tautological relations by Clader and Janda~\cite{CJ}.  It has also led to a search for pluricanonical variants of the double ramification cycle, completing the locus $\DR_{g,\mathbf a}(\omega^{\otimes k})$ consisting of those marked curves $(C,\mathbf x)$ such that $\mathcal O(\sum a_i x_i) \simeq \omega_C^{\otimes k}$.  This question --- which was asked us by F.\ Janda and R.\ Pandharipande --- precipitated the present work.

Theorems~\ref{thm:div} and~\ref{thm:vfc} answer this question.  Let $\Div_{g,\mathbf a}(\omega^{\otimes k})$ be the pullback of the Abel--Jacobi section along the morphism $\overnorm{\mathcal M}_{g,n} \to \Pic_{g,n}$ that sends $(C, \mathbf x)$ to $\omega_C^{\otimes k}$.

\begin{topcorollary}
The projection $\Div_{g,\mathbf a}(\omega^{\otimes k}) \to \overnorm{\mathcal M}_{g,n}$ is proper, and $\Div_{g,\mathbf a}(\omega^{\otimes k})$ has a virtual fundamental class in dimension $2g - 3 + n$.
\end{topcorollary}

The same class can also be constructed using $\Rub_{g,\mathbf a}$.  This facilitates a stable maps interpretation of the pluricanonical double ramification cycle, which we do not pursue here.  Understanding the details of this interpretation would presumably connect our approach to Gu\'er\'e's~\cite{Guere}.

Pixton's conjecture --- but not its proof --- was at least partly inspired by calculations of the double ramification cycle over the compact type locus due to Hain~\cite{H} and Grushevsky--Zakharov~\cite{GZ1} (in a follow up, Grushevsky and Zakharov are able to go slightly beyond compact type~\cite{GZ2}).  Those proofs rely on an extension of the Abel--Jacobi section beyond compact type, and we hope that our work might help to extend the ideas of Hain and Grushevsky--Zakharov to all of $\overline{\mathcal M}_{g,n}$.  For the moment, however, it seems that the lack of a principal polarization on the Picard group of a curve that is not of compact type presents an additional obstacle, and that we will need to work in the logarithmic Picard group instead~(see \cite{logpic1} and a forthcoming
paper by Molcho, Ulirsch and Wise).

\subsection*{Related work}

This paper replaces our preprint~\cite{MW}, which defined closely related moduli spaces but lacked the logarithmic and tropical perspectives employed here, and claimed weaker results.  In the intervening years, several others have achieved similar results, also using logarithmic methods~\cite{Guere,CC1,Holmes}.  We feel that the tropical perspective we employ is natural, and likely to be useful elsewhere (see \cite{genus1}, for example).

Our work can also be seen as a functorial perspective on, and generlization of, the spaces of twisted canonical divisors introduced by Farkas and Pandharipande~\cite{FarkasP} and generalized to $k$-canonical divisors by Schmitt~\cite{Schmitt}.

\subsection*{Logarithmic geometry, tropical geometry, and their relevance to divisors}

We rely heavily on logarithmic geometry as an interpolant between algebraic geometry (narrowly construed) and tropical geometry.  It is a convenient language by which combinatorial data, like dual graphs of algebraic curves, can be permitted to move in algebraic families.  While we summarize some of the main features of logarithmic geometry, as we use it, in Section~\ref{sec:loggeom}, we leave the matter of a thorough introduction to others, such as Ogus's textbook~\cite{Ogus}, K.\ Kato's original papers~\cite{Kato-LogStr,Kato-Toric}, or the surveys~\cite{handbook,skelfan}.  The introduction to logarithmic structures and logarithmic curves in~\cite[Sections~6.1, 6.2, 7.1, and~7.2]{CCUW} is closely aligned with the way we use logarithmic structures here.

Within the logarithmic category, it is possible to speak of morphisms from logarithmic schemes to combinatorial fans.  The details about how this works were the subject of~\cite[Section~6]{CCUW} (see also \cite[Sections~2.2 and~2.3]{LM}).  Perhaps surprisingly, morphisms between these combinatorial data can require meaningful \emph{geometric} input, and therefore be parameterized by geometrically interesting moduli spaces.

Without trying to be too precise yet, a logarithmic structure on a scheme $C$ is a family of line bundles on $C$, indexed by a sheaf of partially ordered abelian groups $\overnorm M_C^{\rm gp}$.  When $C$ is a logarithmic curve over a logarithmic base $S$, we define a logarithmic divisor on $C$ to be a section of this sheaf, $\overnorm M_C^{\rm gp}$ (technically, it will be an equivalence class of such sections).  Our notation for the moduli space of genus~$g$, $n$-marked logarithmic curves equipped with logarithmic divisors is $\Div_{g,n}$.

The dual graph of the logarithmic curve $C$ can be given an integral piecewise linear structure (see \cite[Section~7.2]{CCUW} for the construction) and the sheaf $\overnorm M_C^{\rm gp}$ can be interpreted combinatorially as the sheaf of affine, piecewise linear functions on the dual graph, with respect to this structure (see Section~\ref{sec:pwl} and \cite[Remark~7.3]{CCUW}).  To give such a function requires certain relations to hold among the lengths of the edges of the dual graph, which correspond to certain identities inside $\overnorm M_S^{\rm gp}$, the logarithmic structure of the base.

But the logarithmic structure of $S$ also includes a family of line bundles indexed by $\overnorm M_S^{\rm gp}$, and if one is to impose identities between elements of $\overnorm M_S^{\rm gp}$, one must also have isomorphisms between the corresponding line bundles.  The choices of these isomorphisms are parameterized by tori, which naturally appear as the interiors of the exceptional loci of the birational modification $\Div_{g,n} \to \mathfrak M_{g,n}$ promised in Theorem~\ref{thm:rubbervfc}.

Once $\Div_{g,n}$ has been constructed, the map to $\Pic_{g,n}$ comes for free: a point of $\Div_{g,n}$ is a logarithmic curve, equipped with a section $\alpha$ of $\overnorm M_C^{\rm gp}$; by definition, $\overnorm M_C^{\rm gp}$ parameterizes line bundles on $C$, and sending $(C, \alpha)$ to the corresponding line bundle gives the required map.

\subsection*{Conventions and notation}

For the most part, we work in the logarithmic category, so $T_X$ and $\Omega_X$ denote the logarithmic tangent and cotangent bundles of a logarithmic scheme $X$.  We write $\undernorm X$ for the underlying scheme of a logarithmic scheme, so $T_{\undernorm X}$ and $\Omega_{\undernorm X}$ denote the tangent and cotangent sheaves of the scheme underlying $X$.

The main exception to this convention is for moduli spaces that already have a widely recognized notation, particularly the moduli space of curves.  We use $\mathcal M_{g,n}$ for the moduli space of smooth curves with $n$ distinct markings ($2g -2 + n > 0$), and $\overline{\mathcal M}_{g,n}$ for the Deligne--Mumford--Knudsen compactification.  We write $\mathcal M_{g,n}^{\log}$ for the moduli space of logarithmic curves, whose underlying Deligne--Mumford stack is $\overnorm{\mathcal M}_{g,n}^{\log}$~\cite{FKato_curves}.  We use a fraktur $\mathfrak M$ for pre-stable variants.

\subsection*{Acknowledgements} This work has benefited from helpful and encouraging conversation with Gabriele Mondello and Dhruv Ranganathan.  The questions and comments of Tom Graber, David Holmes, and Rahul Pandharipande have also been very helpful to us.  We owe a particular debt to Felix Janda and Rahul Pandharipande for asking the questions that motivated this project.  We are very grateful as well to the anonymous referees, whose careful reading and suggestions have simplified, corrected, and otherwise improved our work considerably.

J.W.\ was partially supported by NSA grants H98230-14-1-0107 and H98230-16-1-0329 and a Simons Collaboration Grant.

\section{The naive Abel--Jacobi map}

We begin by describing the Abel-Jacobi section over the locus of compact type curves, in order to motivate the main construction of this paper.  This section is not logically necessary for what follows, and may be omitted.  In this section, we do not use any logarithmic structures.

\subsection{Compact type}
\label{sec:ct}
\numberwithin{theorem}{subsection}

\begin{definition}
Let $\ctDiv_g$ be the space of tuples $(C, \mathbf x, \mathbf a)$ consisting of a pre-stable, compact type curve $\pi : C \to S$ of genus $g$, marked sections $x_1, \ldots, x_n$ of $C$ over $S$, and a vector $\mathbf{a} = (a_1, \ldots, a_n)$ of integer weights.  We decorate $\ctDiv$ with subscripts and superscripts to fix combinatorial data:  $\ctDiv^0_g$ is the open and closed substack where $\sum a_i = 0$ and $\ctDiv_{g,\mathbf a}$ is the open and closed substack where the vector $\mathbf a$ has been fixed.
\end{definition}

Note that $\ctDiv_{g,\mathbf a}$ is isomorphic to the stack ${}^{\mathrm{ct}}\mathfrak M_{g,n}$ of $n$-marked, pre-stable, compact type curves.

\begin{definition} \label{def:pic-ct}
We denote by $\ctPic_{g,n}$ the stack whose $S$-points are triples $(C, \mathbf x, L)$ where $(C, \mathbf x)$ is a family of $n$-marked, pre-stable, compact type curves $C$ of genus~$g$, and $L$ is a section of $\pi_\ast(\BGm_C) / \BGm_S$.  We denote by $\ctPic^{0}_{g,n}$ the open substack in which $L$ has degree zero on every component of the fibers of $C$.
\end{definition}

We construct a map $\ctDiv_{g,\mathbf a}^0 \rightarrow \ctPic_{g,n}^0$.  Suppose that $(C, \mathbf x)$ is an $S$-point of $\ctDiv_{g,\mathbf a}$.  We must give a line bundle on $C$, well defined up to tensoring by a line bundle pulled back from $S$.  Since $\ctDiv_{g,\mathbf a}$ is smooth, it is sufficient to treat the case where $S$ is smooth and the family $C$ is a versal deformation of its fibers.  In particular, the total space of $C$ is smooth in this case.

It is also sufficient to work in an \'etale neighborhood of each geometric point of $S$, so we fix one and call it $s$.  We may therefore assume that the boundary of $S$ is a \emph{simple} normal crossings divisor and that all of its components contain $s$.
Each of these components $D_j$ of the boundary of $S$ corresponds to an edge in the dual graph of $C_s$.  Since $C_s$ is of compact type, removing this edge will disconnect the dual graph.  The two components correspond locally in $S$ to divisors in the total space of $C$, so that after further \'etale localization around $s$, we can assume that there are divisors $C_j$ and $C'_j$ such that $\pi^{-1} D_j = C_j + C'_j$.

Now there is a choice of integers $b_j$ and $b'_j$ such that the line bundle $L = \mathcal O_{C}(\sum a_i x_i + \sum b_j C_j + \sum b'_j C'_j)$ has degree zero on every component of $C_s$.  The line bundle $L$ will continue to have degree zero on every component of $C_t$ for every $t$ in an open neighborhood of $s$, so after replacing $S$ by this open neighborhood, we obtain an object of $\ctPic_{g,n}^0$.
Moreover, the construction of $L$ is unique up to replacing $b_j$ and $b'_j$ by $b_j + c$ and $b'_j + c$.  This has the effect of twisting $L$ by $c D_j$, which defines the same object of $\ctPic_{g,n}^0$.  It follows that our construction is independent of our choices and consistent with further localization, and therefore that it glues to a map $\ctDiv_{g,\mathbf a}^0 \rightarrow \ctPic^0_{g,n}$.

There is a twisted variant of this construction taking values in $\ctPic^{\kcan}_{g,n}$, where the superscript $\rm can$ refers to the open and closed substack parameterizing line bundles of the $k$-canonical multidegree ($k(2h-2+m)$ on a component of genus $h$ with $m$ special points).  Let $\mathbf a$ be a tuple of integers such that $\sum a_i = k(2g - 2)$.  Then, as above, there is a unique way of twisting $\mathcal O_C(\sum a_i x_i)$ so that it has the $k$-canonical multidegree on each component.  This gives a map $\ctDiv_{g,\mathbf a}^{\kcan} \to \Pic^{\kcan}_{g,n}$. 

\begin{remark}
We emphasize that our Abel--Jacobi map retains the configuration of the marked points, and is therefore injective on $\ctDiv_{g,\mathbf a}$ for any fixed $\mathbf a$ of total weight zero.  One can, of course, subsequently forget the marked points, which vary in a proper family.
\end{remark}

\begin{proposition}\label{thm:ct-proper}
The map $\ctDiv_{g,\mathbf a}^{\kcan} \to \ctPic_{g,n}^{\kcan}$ is a closed embedding.
\end{proposition} 
\begin{proof}
The projection $\ctPic_{g,n}^{\kcan} \to \ctM_{g,n}$ is separated and the composition $\ctDiv_{g,\mathbf a}^{\kcan} \to \ctM_{g,n}$ is an isomorphism.  Therefore the map $\ctDiv_{g,\mathbf a}^{\kcan} \to \ctPic_{g,n}^{\kcan}$ is a section of a separated map, hence a closed embedding.\footnote{We thank D.\ Holmes and an anonymous referee for pointing out that this simple explanation could replace our original argument.  The same conclusion holds for canonical and pluricanonical variants.}
\end{proof}

It may be important to consider multidegrees other than $0$ and $2g-2$, so we emphasize here that the same proof would show that if $L$ had a different multidegree, there would be a unique twist of $M$ by a Cartier divisor supported in the special fiber that recovers $L$.  It seems difficult to formulate Proposition~\ref{thm:ct-proper} in a global way for other multidegrees using the terminology of this section.  Later on, the more general version will fall out naturally from the formulation in terms of logarithmic geometry.

\begin{definition}
Given a family of compact type curves $C$ over $S$ and a line bundle $L$ of multidegree~$0$ on $C$, let $\ctDiv_g^0(C, L)$ be the locus in $\ctDiv_g^0(C)$ where $\mathcal O_{C}(\sum a_i x_i) \simeq L$.  More generally, if $L$ has $k$-canonical multidegree, define $\ctDiv_g^{\kcan}(C,L)$ to be the locus in $\ctDiv_g^{\kcan}(C,L)$ to be the locus in $\ctDiv_g^{\kcan}(C)$ where $\mathcal O_C(\sum a_i x_i) \simeq L$.
\end{definition}

The following statement follows immediately from the definitions, and is included for the sake of comparison to the logarithmic results that follow.

\begin{proposition}\label{thm:divcartesian}
There are cartesian diagrams:
\begin{equation*} \xymatrix{
\ctDiv_{g, \mathbf a}^0(C,L) \ar[r] \ar[d] & S \ar[d]^{(C,L)} \\
\ctDiv_{g,\mathbf a}^0 \ar[r] & \ctPic_g^0
} \qquad \xymatrix{
\ctDiv_{g, \mathbf a}^{\kcan}(C,L) \ar[r] \ar[d] & S \ar[d]^{(C,L)} \\
\ctDiv_{g,\mathbf a}^{\kcan} \ar[r] & \ctPic_g^{\kcan}
} \end{equation*}
\end{proposition}

In particular, let $S = \ctM_{g,n}$, let $C$ be its universal curve, let $L$ be its structure sheaf, and let $\mathbf a$ be a vector of total weight zero.  Then let $\DR_{g,\mathbf a}$ be the homology class of $\ctDiv_{g,\mathbf a}^0(C,L)$ in $\ctM_{g,n}$ under the horizontal projection (which is a closed embedding).  We will see later that this coincides with the double ramification cycle as defined via stable maps.

\subsection{Indeterminacy of the Abel--Jacobi map}
\label{sec:indeterminacy}

In this section we will use $\mathbf D_{g,\mathbf a}^0$ to denote a \emph{na\"ive} extension of $\ctDiv_{g,\mathbf a}^0$ out of the compact-type locus, which is to say the moduli space of pre-stable marked curves with integral weights summing to zero.  We will see momentarily that this definition is not suitable for defining an Abel--Jacobi map, and we correct it in Section~\ref{sec:log-aj}.

The Abel--Jacobi map becomes indeterminate outside of the compact type locus.  There are two sorts of problems that can occur.  The first, and perhaps more familiar, is when a divisor of total degree zero degenerates so that the degree is not zero on every component, and this defect cannot be corrected because of the topology of the dual graph and the rates of smoothing of the nodes (see Figure~\ref{fig:degen1}).

\begin{figure} 
\begin{center}

\begin{tikzpicture} 
	\draw (2,2.2) .. controls (3,2.5) and (4,2.4) .. (5,2.2) .. controls (6, 2) and (7,1.9) .. (8,2.2);
	\draw (2,5.5) .. controls (3,5.8) and (4,5.7) .. (5,5.5) .. controls (6, 5.3) and (7,5.2) .. (8,5.5);
	\draw (2.5,5.2) to[out=280, in=100] node [circle, fill=black,pos=0.25,inner sep=0pt,minimum size=3pt,label=right:{$p$}] {} node [circle, fill=black,pos=0.75,inner sep=0pt,minimum size=3pt,label=right:{$p'$}] {} (2.5,2.9);
	\draw [name path=one] (6.7,5) to[out=320,in=40] node [circle,fill=black, pos=0.5,inner sep=0pt,minimum size=3pt,label=right:{$p'$}] {} node [pos=1,label=left:{$E$}] {}
(6.7,2.7);
	\draw [name path=two] (7.3,5) to[out=220,in=140] node [circle,fill=black, pos=0.5,inner sep=0pt,minimum size=3pt,label=left:{$p$}] {} node [pos=1,label=right:{$D$}] {} (7.3,2.7);
	\path [name intersections={of=one and two, by={[label=right:{$q$}]Q,[label=right:{$r$}]R}}];
    \node [circle, fill=black,inner sep=0pt,minimum size=3pt] at (Q) {};
    \node [draw=red, circle, thick, inner sep=0pt,minimum size=0.7cm] at (Q) {};
    \node [draw=teal, circle, thick, inner sep=0pt,minimum size=0.7cm] at (R) {};
    \node [circle, fill=black,inner sep=0pt,minimum size=3pt] at (R) {};
    \draw [->] (5,1.8) -- (5,0.5);
	\draw [->] (0,3.5) -- (0,0.5);
	\node at (0,4) {$C$};
	\node at (0,0) {$S$};
	\draw (2,0) -- (7,0) -- (8,0);
	\filldraw (7,0) circle [radius=2pt];
	\node [below] at (7,0) {$t=0$};
    \draw [draw=red,thick] ($(Q)+(0,0.35)$) -- (9.5,5.863);
    \draw [draw=red,thick] ($(Q)-(0,0.35)$) -- (10,4);
    \node [name path=redcircle,draw=red, circle, thick, inner sep=0pt, minimum size=2cm,label=right:{$xy=u$}] at (10,5) {};
    \draw [draw=teal,thick] ($(R)+(0,0.35)$) -- (10,3.5);
    \draw [draw=teal,thick] ($(R)-(0,0.35)$) -- (9.5,1.629);
    \node [name path=tealcircle,draw=teal, circle, thick, inner sep=0pt, minimum size=2cm,label=right:{$zw=v$}] at (10,2.5) {};
    \path [name path=ex1] ($(10,5)-(1,1)$) to ($(10,5)+(1,1)$);
    \path [name path=ex2] ($(10,5)-(-1,1)$) to ($(10,5)+(-1,1)$);
    \path [name path=ex3] ($(10,2.5)-(1,1)$) to ($(10,2.5)+(1,1)$);
    \path [name path=ex4] ($(10,2.5)-(-1,1)$) to ($(10,2.5)+(-1,1)$);
    \node [name intersections={of=ex1 and redcircle, by={red_tr,red_bl}}] {};
    \node [name intersections={of=ex2 and redcircle, by={red_tl,red_br}}] {};
    \node [name intersections={of=ex3 and tealcircle, by={teal_tr,teal_bl}}] {};
    \node [name intersections={of=ex4 and tealcircle, by={teal_tl,teal_br}}] {};
    \draw (red_bl) to (red_tr);
    \draw (red_br) to (red_tl);
    \draw (teal_bl) to (teal_tr);
    \draw (teal_br) to (teal_tl);
    \node [above left] at (red_tl) {$x=0$};
    \node [above right] at (red_tr) {$y=0$};
    \node [below left] at (teal_bl) {$z=0$};
    \node [below right] at (teal_br) {$w=0$};
    \draw [decorate,decoration={brace,amplitude=5pt}] (6.5,5.5) -- (7.5,5.5);
    \node [above] at (7,5.6) {$C_0$};
\end{tikzpicture}
\caption{A family of curves deforming to a singular special fiber $C_0$ consisting of two components meeting at two distinct nodes.}
\label{fig:degen1}
\end{center}
\end{figure} 
If $u$ and $v$ are both unit multiples of the local parameter $t$, then the total space of the family is smooth, and twisting by one of the components will change the degree on each of the two components of the special fiber by an even number.  If we assign odd weights $k$ and $-k$ to $p$ and $p'$ then, since the degree of $\mathcal O_{C_0}(k p - k p')$ is odd, no twist can achieve zero degree on both components.  However, this is really a defect of $\Pic^0$ --- the line bundle with degree zero on the general fiber has no limit on the special fiber that has degree zero on both components --- and is not our concern in this paper.  This defect is corrected by the logarithmic Picard group~\cite{logpic1}.

More serious, at least for our present concerns, is the case where there is a limiting line bundle of degree zero, \emph{but this line bundle depends on the choice of the degenerating family}.  The divisor $k p - k p'$ on the special fiber clearly depends only on the location of $p$ and $p'$ on the special fiber, and not on the family, so the Abel--Jacobi section must become indeterminate at such a point.  Indeed, assuming that $k = 2 \ell$ is even, we can twist by $\ell D$ to get a line bundle of degree zero on the total space.  However, we shall demonstrate below that if $\ell \neq 0$ then twisting by $\ell D$ changes the gluing data at the two nodes in a manner depending on the relative rates of smoothing of the two nodes.
The upshot here is that we cannot have a well-defined map $\mathbf D_{g,\mathbf a}^0 \rightarrow \Pic_{g,n}^0$ on curves of the sort illustrated in Figure~\ref{fig:degen1} unless we modify $\mathbf D_{g,\mathbf a}^0$ to include additional data that can resolve the indeterminacy.

We explain precisely how this indeterminacy arises in the example above (see Figure~\ref{fig:degen1} for notation).  It is interesting to consider a general situation where $u$ and $v$ are not necessarily unit multiples of the smoothing parameter, but where there are nonzero integers $a$ and $b$ such that the ratio of $u^a$ and $v^b$ is a unit, and $a + b = k$.  Of course, $a$ and $b$ must have the same sign if this is to happen.

Under these conditions, we can describe a Cartier divisor $E'$ supported on $E$  by the following local equations:
\begin{equation*} \begin{cases}
x^a  = 0  \quad\text{near $q$} \\
z^b  = 0  \quad\text{near $r$} \\
u^a  = v^b  = 0  \quad\text{near $D$, away from $q$ and $r$} \\
1  = 0 \quad \text{elsewhere}
\end{cases} \end{equation*}
This divisor has numerical intersection $a + b = k$ with $D$.  A similar construction gives a divisor $D'$ having numerical intersection $k$ with $E$.  Then $D' + E'$ is a multiple of a fiber, hence is trivial, and $D'.D = E'.E = -k$.  We note that $\mathcal O_C(D'+E')$ is not canonically trivial, but rather is trivialized by the choice of some $\alpha u^{-a} = \beta v^{-b}$.  We fix one such trivialization.

Now take $L = \mathcal O_C(k p - k p')$.  A line bundle on $C_0$ can be specified by giving a line bundle $L'_D$ on $D$, a line bundle $L'_E$ on $E$, and isomorphisms $L'_D \big|_q \simeq L'_E \big|_q$ and $L'_D \big|_r \simeq L'_E \big|_r$.  We work out these gluing isomorphisms when $L' = L(D')$.  

Set $L_D = L \big|_D$ and $L_E = L \big|_E$.  We certainly have $L_D \big|_q = L_E \big|_q$ and $L_D \big|_r = L_E \big|_r$.  We set $L'_D = L_D(-a q - b r)$ and $L'_E = L_E(a q + b r)$.  The gluing isomorphism is:
\begin{equation*}
L'_D \big|_q = L_D(-a q) \big|_q = L(-E') \big|_q \xrightarrow[\alpha u^{-a}]{\sim} L(D') \big|_q = L_E(a q) \big|_q = L'_E \big|_q
\end{equation*}
A local generator $x^a$ for $L'_D \big|_q$ is carried under this isomorphism to $\alpha u^{-a} x^a = \alpha y^a$ since $xy = u$ near $q$.  Therefore changing the smoothing parameter $u$ at $q$ by a unit $\lambda$ has the effect of scaling the gluing parameter by $\lambda^{-a}$.

A similar calculation near $r$ shows that adjusting $v$ by a unit $\mu$ has the effect of scaling the gluing parameter of $L'$ at $r$ by $\mu^{-b}$.  If $u$ and $v$ are scaled by units so that $u^a / v^b$ is changed, then so will be the limiting line bundle $L'$ in $\Pic^0$ determined by this family.  In particular, if $k \neq 0$, then $L'$ depends on the relative rates of smoothing of the nodes $q$ and $r$.

This calculation also illustrates how to resolve the indeterminacy, at least in this example:  we should blow up $\mathbf D_{g,\mathbf a}$ at the intersection of the boundary divisors corresponding to $Q$ and $R$ so as to introduce a coordinate that keeps track of the ratio between $u^a$ and $v^b$.

Now let us consider a $2$-parameter base, with local coordinates $u$ and $v$.  Then $\mathcal O_C(kp - kp')$ is a section of $\Pic^0(C)$ over the complement of the special point of $S$.  Let $Z$ be its closure.  We will see in Section~\ref{sec:ex-res} that our main theorem implies that $Z$ is not normal if $D$ and $E$ have genus~$0$ and $k > 2$.  As a sanity check, we verify this directly.

We have just seen that the special fiber $Z_0$ of $Z$ is all of $\Pic^0(C_0) \simeq \Gm$.  For any nonzero integers $a$ and $b$ of the same sign such that $a + b = k$, we can find a discrete valuation $\gamma$ of the residue field at the generic point of $S$ such that $\gamma(u^a) = \gamma(v^b)$. Let $T_{a,b}$ be the spectrum of this valuation ring.

By the discussion above, the restriction $Z_{a,b}$ of $Z$ to $T_{a,b}$ has a limit in $Z$, which must be the generic point of $Z_0$.  Each choice of $a$ and $b$ as above therefore gives a distinct valuation of the local ring of $Z$ at the generic point of $Z_0$.  Since $k > 2$, there is more than one choice for $a$ and $b$, which implies that the local ring at $Z_0$ is a $1$-dimensional, noetherian, local ring but not a discrete valuation ring, hence is not integrally closed.

\section{Review of logarithmic geometry} 
\label{sec:loggeom}

\subsection{Logarithmic structures}
\label{sec:logstr}

We briefly recall the needed definitions from logarithmic geometry and refer the reader to \cite{Kato-LogStr} or \cite{Ogus} for a more thorough introduction.

Recall that a monoid is a commutative semigroup with a unit and a homomorphism of monoids is required to preserve the unit element. Given a monoid $M$ we denote by $M^{\rm gp}$ its groupification defined by the universal property that any morphism from $M$ to an abelian group factors through $M^{\rm gp}$ uniquely. A monoid $M$ is called \emph{integral} if the universal map $M\rightarrow M^{\rm gp}$ is injective.  \emph{All monoids in this paper will be integral.}

It is important to note that an integral monoid $M$ can be seen as the set of elements $\geq 0$ in a partial semiorder on the associated group $M^{\rm gp}$.  If the monoid is \emph{sharp}, meaning its only invertible element is the identity, then the partial semiorder is a partial order.  We will use this perspective frequently, particularly in Section~\ref{sec:rub-div}.

A monoid is called \emph{fine} if it is integral and finitely generated. An integral monoid is called \emph{saturated} if $m\in M^{\rm gp}$  and $nm\in M$ for some positive integer $n$ implies $m\in M$. We apply the same adjectives for sheaves of monoids and we often write simply monoid in place of ``sheaf of monoids".

A \emph{logarithmic structure} on a scheme $S$ is an \'etale sheaf of monoids $M_S$ and a homomorphism $\varepsilon : M_S \to \mathcal O_S$ where the target is given its multiplicative monoid structure and such that every unit of $\mathcal O_S$ has a unique preimage under $\varepsilon$.  In other words, $\varepsilon$ restricts to a bijection $\varepsilon^{-1}(\mathcal O_S^\ast) \to \mathcal O_S^\ast$.  We therefore use $\varepsilon^{-1}$ to denote the composition $\mathcal O_S^\ast \to M_S\to M_S^{\rm gp}$. The monoid structure on $M_S$ will be written \emph{multiplicatively}. A morphism $M_S \to N_S$ of logarithmic structures on $S$ is a morphism of sheaves of monoids compatible with their morphisms to $\mathcal O_S$. 

We define $\overnorm M_S = M_S / \varepsilon^{-1}(\mathcal O_S^\ast)$ and call it the \emph{characteristic sheaf of monoids} or simply the \emph{characteristic monoid} of the logarithmic structure.  We notate the monoid structure of $\overline M_S$ \emph{additively}.  By construction, $\overnorm M_S$ is sharp, so the induced semiorder on $\overnorm M_S^{\rm gp}$ is a partial order.  

By definition, there is an exact sequence of \'etale sheaves of abelian groups
\begin{equation*}
0 \to \mathcal O_S^\ast \xrightarrow{\varepsilon^{-1}} M_S^{\rm gp} \to \overnorm M_S^{\rm gp} \to 0
\end{equation*}
which induces a map
\begin{equation*}
\Gamma(S, \overnorm M_S^{\rm gp}) \to H^1(S, \mathcal O_S^\ast) 
\end{equation*}
that will play a crucial role in this paper.  That is, every section $\alpha$ of the characteristic monoid $\overnorm M_S^{\rm gp}$ induces a $\mathcal O_S^\ast$-torsor $\mathcal O_S^\ast(-\alpha)$ on $S$.  In concrete terms, $\mathcal O_S^\ast(-\alpha)$ is the coset of $\varepsilon^{-1} \mathcal O_S^\ast$ in $M_S^{\rm gp}$ classified by $\alpha$.  The restriction of $\varepsilon$ to $\mathcal O_S^\ast(-\alpha)$ induces a homomorphism of line bundles:
\begin{equation*}
\varepsilon_\alpha : \mathcal O_S(-\alpha) \to \mathcal O_S.
\end{equation*}
We also use $\varepsilon_\alpha$ to denote the dual map $\mathcal O_S \to \mathcal O_S(\alpha)$. This construction of the line bundle $\mathcal O_S(\alpha)$ will be used throughout the remainder of the paper.

Logarithmic schemes will be defined to be schemes equipped with logarithmic structures that possess local charts.  We prefer a slightly more permissive definition of a chart, and therefore of quasicoherence, than does Kato~\cite[Section~2.1]{Kato-LogStr}:

\begin{definition} \label{def:chart}
A \emph{chart} for a logarithmic structure $M$ on $X$ is a monoid $P$ and a homomorphism $P \to \Gamma(X, \overnorm M_X)$ such that $M_X$ is the initial example of a logarithmic structure on $X$ admitting such a map.  A logarithmic structure is called \emph{quasicoherent} if it has charts \'etale-locally, and is called \emph{coherent} if it is quasicoherent and the monoids $P$ can be chosen finitely generated.
\end{definition}

\noindent Preferences aside, the difference between Kato's definition of quasicoherence and ours has no logical bearing on this paper.  Our charts are equivalent to Kato's when $P$ is finitely generated, and therefore our coherent logarithmic structures coincide with Kato's, although our quasicoherent logarithmic structures are more inclusive.  Almost all logarithmic structures in this paper will be quasicoherent, and in fact coherent.  The only place where we make use of non-coherent logarithmic structures is in the valuative criterion for properness, proved in Section~\ref{sec:valuative}, but the notion of quasicoherence does not intervene there.

\begin{example}
Suppose that $X$ is a smooth curve of genus $1$ and $A \subset X$ is a set of closed points.  Each finite subset of $A$ generates a divisorial logarithmic structure on $X$.  The union of these logarithmic structures, under the natural inclusion maps, is a quasicoherent logarithmic structure in our sense, but if $A$ is not contained in a finitely generated subgroup of $X$ then this logarithmic structure is not quasicoherent in Kato's sense.
\end{example}

We will use the term \emph{logarithmic scheme} to mean a scheme $S$ equipped with a logarithmic structure $\varepsilon:M_S\to \mathcal O_S$ that is quasicoherent and integral.  If $S$ and $T$ are logarithmic schemes equipped with logarithmic structures and $f : S \to T$ is a morphism of schemes, there is a univeral (initial) way of extending the composition $f^{-1} M_T \to f^{-1} \mathcal O_T \to \mathcal O_S$ to a logarithmic structure $f^\ast M_T$ on $S$.  The characteristic monoid of $f^\ast M_T$ coincides with the \'etale pullback of sheaves $f^{-1} \overnorm M_T$.  A morphism of logarithmic schemes from $(S, M_S)$ to $(T, M_T)$ is a morphism of schemes $f : S \to T$ and a morphism of logarithmic structures $f^\ast M_T \to M_S$.

%The pullback of logarithmic structures $f^\ast$ has a right adjoint~\cite[Section~(1.4)]{Kato-LogStr}.  We will only need the special case where $f : S \to T$ is the inclusion of an open subset (or a cofiltered intersection of open subsets), so we will not introduce a special notation for it.  In this case, we call it the \emph{maximal extension} of $M_T$ to $S$; in general, it can be constructed explicitly as the fiber product $f_\ast (M_T) \mathop\times_{f_\ast \mathcal O_T} \mathcal O_S$.  We caution that the maximal extension of a coherent logarithmic structure often fails to be coherent.  We do not know if the maximal extension of a quasicoherent logarithmic structure is necessarily quasicoherent.

Morphisms of logarithmic schemes are logarithmically smooth (resp.\ \'etale) if they are locally of finite presentation on the underlying schemes, their logarithmic structures are also locally finitely presented, and they satisfy the right lifting property (resp.\ the unique right lifting property) for strict infinitesimal extensions of logarithmic schemes.  See~\cite[Section~3]{Kato-LogStr} for a thorough introduction to logarithmic differentials and smoothness.

Logarithmic structures will not be assumed saturated, although a saturation hypothesis may usually be imposed with no significant change to the results.  The exception to this rule is Section~\ref{sec:rubber}, where the comparison to Graber and Vakil's moduli space is applies only to the \emph{non-saturated} variants.

\subsection{Logarithmic curves}
\label{sec:logcurves}

We recall the local description of a logarithmic curve. We refer the reader to \cite{FKato_curves} for further background.

\numberwithin{theorem}{subsection}
\begin{definition} \label{def:log-curve}
Let $S$ be a logarithmic scheme.  A \emph{logarithmic curve} over $S$ is a morphism $\pi : C \to S$ of logarithmic schemes that is logarithmically smooth with connected geometric fibers and has the following local description at every geometric point $x$ of $C$ over a geometric point $s$ of $S$: either
\begin{enumerate}[label=(\roman{*})]
\item $x$ is a smooth point of the underlying curve of $C$ and $\overnorm M_{C,x} \simeq \overnorm M_{S,s}$, or
\item $x$ is a smooth point of the underlying curve of $C$ and $\overnorm M_{C,x} \simeq \overnorm M_{S,s} + \mathbf N$, with the additional generator corresponding to a local parameter of $C$ at $x$, or
\item $x$ is a node of the underlying curve of $C$ and $\overnorm M_{C,x} \simeq \overnorm M_{S,s} + \mathbf N \alpha + \mathbf N \beta / (\alpha + \beta = \delta)$ for some $\delta \in \overnorm M_{S,s}$, with $\alpha$ and $\beta$ corresponding to the parameters for $C$ along its branches at $x$ and $\delta$ corresponding to a parameter on the base smoothing the node.
\end{enumerate} 
\noindent In the third part, the element $\delta \in \overnorm M_{S,s}$ is known as the \emph{smoothing parameter} of the node. An alternative local description to (iii) is 
\begin{equation*}
\overnorm M_{C,x} \simeq \{\:(\rho,\sigma)\in \overnorm M_{S,s}\times \overnorm M_{S,s} \:\big|\: \rho-\sigma\in\mathbf Z\delta\:\}.
\end{equation*}
These two descriptions may be identified by sending $\gamma+a\alpha+b\beta$ to $(\gamma+a\delta,\gamma+b\delta)$.
\end{definition}

Logarithmic curves can be defined more parsimoniously as logarithmically smooth, integral, saturated families with connected fibers (see \cite[Definition~1.1 and Theorem~1.1]{FKato_curves} and \cite[Theorem~4.2]{Tsuji}).  The local description will be more important in this paper.

\subsection{Tropical curves}\label{sec:tropcurves}

Suppose that $S$ is a logarithmic point whose underlying scheme is the spectrum of an algebraically closed field and $C$ is a proper logarithmic curve over $S$.  The \emph{tropicalization} of $C$ is the dual graph of $C$, with a leg for each marked point and each edge metrized by its smoothing parameter (Definition~\ref{def:log-curve}). 

\begin{remark} \label{rmk:curve-family}
The smoothing parameters live inside the characteristic monoid $\overnorm M_S$ of the base, which is not necessarily equal to $\mathbf R_{\geq 0}$, so we are not endowing the dual graph with a metric in the usual sense.  However, each homomorphism $\overnorm M_S \to \mathbf R_{\geq 0}$ induces a metric on the dual graph, by applying the homomorphism to the smoothing parameters, so what we have is actually a \emph{family} of metrics, parameterized by $\Hom(\overnorm M_S, \mathbf R_{\geq 0})$.  We therefore ask the reader to indulge us in our abuse of terminology, and permit us to call this labelling a metric.
\end{remark}

\begin{remark}
It is possible to make sense of a varying family of tropical curves over a logarithmic base scheme \cite[Section~7]{CCUW}, but we will not need to do so here.
\end{remark}

Let $\Gamma_C$ be the tropicalization of $C$. Following \cite{CCUW} we understand sections of $\overnorm M^{\rm gp}_C$ through their correspondence to piecewise linear functions on $\Gamma$ valued in $\overnorm M_S^{\rm gp}$ and linear with integer slopes along the edges of $\Gamma$. We summarize this correspondence below, with further details to be found in \cite[Remark~7.3]{CCUW}.

\setcounter{subsubsection}{\value{theorem}}
\subsubsection{Piecewise linear functions} \label{sec:pwl}
Suppose that the underlying scheme of $S$ is the spectrum of an algebraically closed field, that $C$ is a logarithmic curve over $S$, and that $\alpha$ is a section of $\overnorm M_S^{\rm gp}$.  As $\overnorm M_C^{\rm gp}$ is constructible, a section can be described by giving a value in the stalk of $\overnorm M_C$ at a geometric generic point of each stratum, in such that these values are compatible with geometric specialization.  The open strata correspond to the vertices of the dual graph of $C$, and we have $\overnorm M^{\rm gp}_{C,x} = \overnorm M^{\rm gp}_S$ at such a point.  Therefore $\alpha$ induces $\alpha(v) \in \overnorm M^{\rm gp}_S$ for each vertex $v$ of the dual graph of $C$.

These values are not arbitrary, but are constrained by the values at the nodes.  Suppose that $x$ is a node of $C$ generizing to geometric generic points $y$ and $z$.  (If $x$ connected a component of $C$ to itself then $y = z$ but the generization maps are distinct.)  This induces a map
\begin{equation*}
\overnorm M^{\rm gp}_{C,x} \to \overnorm M^{\rm gp}_{C,y} \times \overnorm M^{\rm gp}_{C,z} \simeq \overnorm M^{\rm gp}_S \times \overnorm M^{\rm gp}_S
\end{equation*}
that is injective and furnishes an identification
\begin{equation*}
\overnorm M^{\rm gp}_{C,x} \simeq \{ (a,b) \in \overnorm M^{\rm gp}_S \times \overnorm M^{\rm gp}_S \: \big| \: b - a \in \mathbf Z \delta \}
\end{equation*}
where $\delta \in \overnorm M_S$ is the smoothing parameter of the node $x$.  In the special case where $\overnorm M_S = \mathbf R_{\geq 0}$, we can recognize this characterization of $\overnorm M^{\rm gp}_{C,x}$ as the set of affine linear functions on an interval of length $\delta$ having integer slope.

Applying this observation to every edge of the dual graph of $C$, we see that, if $\overnorm M_S = \mathbf R_{\geq 0}$, a section of $\overnorm M_C^{\rm gp}$ amounts to a piecewise linear function on the dual graph of $C$ that is linear with integer slope along the edges and takes values in $\mathbf R$.  When $\overnorm M_S$ is not $\mathbf R_{\geq 0}$, this interpretation ceases to make literal sense, but we continue to employ the same terminology and regard $\alpha \in \Gamma(C, \overnorm M^{\rm gp}_C)$ as a piecewise linear function on the dual graph of $C$, taking values in $\overnorm M^{\rm gp}_S$.  As in Remark~\ref{rmk:curve-family}, this is not merely an analogy, since each homomorphism $\overnorm M_S \to \mathbf R_{\geq 0}$ induces a piecewise linear function on the graph, as described above, and therefore $\alpha$ may be regarded as a \emph{family} of piecewise linear functions, parameterized by $\Hom(\overnorm M_S, \mathbf R_{\geq 0})$.

\subsubsection{Line bundles on logarithmic curves}

Let $S$ be a logarithmic scheme whose underlying scheme is the spectrum of an algebraically closed field.  As we have seen above, (piecewise linear) maps from a logarithmic curve $C$ over $S$ into $\tropGm$ correspond to piecewise linear functions on the tropicalization, valued in $\overnorm M_S$.  By way of the coboundary map,
\begin{equation*}
H^0(C, \overnorm M_C^{\rm gp}) \to H^1(C, \mathcal O_C^\ast)
\end{equation*}
any such function $\alpha$ induces a line bundle $\mathcal O_C(-\alpha)$ with associated $\mathcal O_C^\ast$-torsor $\mathcal O_C^\ast(-\alpha)$.  Let $C_v$ be the irreducible component of $C$ corresponding to a vertex $v$ of the tropicalization of $C$.  We calculate the restriction, $\mathcal O_{C_v}(-\alpha)$, of $\mathcal O_C(-\alpha)$ to $C_v$.  

Replacing $\alpha$ by $\alpha - \alpha(v)$ will change $\mathcal O_{C_v}(-\alpha)$ by $\mathcal O_{C_v}(\alpha(v))$, which is pulled back from $S$ because $\alpha(v) \in \overnorm M_S^{\rm gp}$.  It will therefore suffice to compute $\mathcal O_{C_v}(-\alpha)$ under the assumption that $\alpha(v) = 0$.

Consider a node or marked point $p$ of $C_v$ along which $\alpha$ has negative slope.  Let $x \in M_{C_v,p}$ be a local coordinate of $C_v$ at $p$ with image $\overnorm x$ in $\overnorm M_{C_v,p}$.  Then $\overnorm M_{C_v,p}^{\rm gp} = \overnorm M_S^{\rm gp} + \mathbf Z \overnorm x$.  As $\alpha(v) = 0$, we have $\alpha_p = \lambda \overnorm x$ for some $\lambda \in \mathbf Z$ --- the slope of $\alpha$ along the edge corresponding to $p$.

By definition, $\mathcal O_{C_v}^\ast(-\alpha)$ is the $\mathcal O_{C_v}^\ast$-torsor of sections of $M_C^{\rm gp}$ projecting to $\alpha$ in $\overnorm M_C^{\rm gp}$.  This means we may identify $\mathcal O_{C_v}^\ast(-\alpha)$ canonically, in a neighborhood of $p$, with $x^{\lambda} \mathcal O_{C_v}^\ast = \mathcal O_{C_v}^\ast(-\lambda p)$.

These observations demonstrate the following proposition:
\begin{proposition} \label{prop:associated-line-bundle}
Let $C$ be a logarithmic curve over $S$, where the underlying scheme of $S$ is the spectrum of an algebraically closed field.  Let $\alpha$ be a piecewise linear function on the tropicalization of $C$.  Then, for each vertex $v$ of the tropicalization of $C$, we have a canonical identification:
\begin{equation*}
\mathcal O_{C_v}(-\alpha) = \mathcal O_S(-\alpha(v)) \otimes \mathcal O_{C_v}(-\sum \lambda_p p)
\end{equation*}
The sum is taken over nodes and marked points of $C$ lying on $C_v$ and $\lambda_p$ is the slope of $\alpha$ along the edge or leg of the dual graph of $C$ corresponding to $p$, oriented away from $v$.
\end{proposition}

\subsection{Logarithmic modifications} \label{sec:log-mod}

For us, a \emph{toric variety} is a normal, equivariant partial compactification of an algebraic torus, with logarithmic structure associated to the inclusion of the dense torus.  Suppose that $\Sigma$ is a rational polyhedral fan in a real vector space with integral lattice $L$.  For any logarithmic scheme $Z$, define $Y_\Sigma(Z)$ to be the set of $\alpha \in \Hom(L, \Gamma(Z, M_Z^{\rm gp})$ such that, locally in $Z$, there is a choice of $\sigma \in \Sigma$ such that $\alpha(\sigma) \subset \Gamma(Z, M_Z)$.  Then $Y_\Sigma$ is representable by a toric variety, and every toric variety arises this way.  If $\Sigma'$ is a subdivision of $\Sigma$ then $Y_{\Sigma'} \to Y_{\Sigma}$ is a birational modification.

\begin{definition}
Let $X$ be a logarithmic scheme.  
A morphism of logarithmic schemes $X' \to X$ that is locally pulled back from a toric modification of toric varieties is called a \emph{logarithmic modification}.
\end{definition}

\begin{proposition} \label{prop:log-mod}
Logarithmic modifications are proper, logarithmically \'etale monomorphisms.  Logarithmic modifications of logarithmically regular\footnote{The definition of logarithmic regularity in \cite[Definition~2.1]{Kato-Toric} requires Zariski-local charts, but is an \'etale-local property by \cite[Theorem~3.1]{Kato-Toric}.  There it is reasonable to apply the term `logarithmically regular' to logarithmic schemes that only have charts \'etale-locally.} logarithmic schemes are also birational.
\end{proposition}
\begin{proof}
In a logarithmically regular logarithmic scheme, the locus where the logarithmic structure is trivial is a dense open subset.  This follows from \cite[Theorem~(3.1)]{Kato-Toric}.  There are no nontrivial logarithmic modifications of a logarithmic scheme with trivial logarithmic structure, so it follows as well that logarithmic modifications of logarithmically regular logarithmic schemes are birational.

The remaining assertions are all local on the base and stable under base change, so it suffices to prove them for an affine toric variety with its toric logarithmic structure.  Suppose $X$ is the affine toric variety corresponding to the rational polyhedral cone $\tau$.  Then the toric modifications $X' \to X$ correspond to the subdivisions of $\tau$ into fans $\Sigma$.  We can describe the functor of points of $X'$ directly in terms of this subdivision:  a map $S \to X'$ is a map $S \to X$ such that $\tau^\vee \to \Gamma(S, \overnorm M_S)$ factors through $\sigma^\vee$ for some $\sigma \in \Sigma$.

We observe that the functor of points of $X'$ is defined by a condition (as opposed to an additional datum) relative to that of $X$, which shows that logarithmic modifications are monomorphisms.  Since this condition can be checked on the level of characteristic monoids, it is automatically logarithmically \'etale.  Properness is well-known from the theory of toric varieties.
\end{proof}

\subsection{The valuative criterion} \label{sec:valuative}

Recall that the pullback of logarithmic structures has a right adjoint~\cite[Section~1.4]{Kato-LogStr}, called pushforward, but that the pushforward of a quasicoherent logarithmic structure is not necessarily quasicoherent.  If $f : X \to Y$ is a morphism of logarithmic schemes and $M_X$ is a logarithmic struture on $X$ then the pushforward logarithmic structure is $\mathcal O_X \mathop\times_{f_\ast \mathcal O_Y} f_\ast M_X$.  

\begin{definition} \label{def:maximal}
Let $j : \eta \to S$ be the inclusion of the generic point in the spectrum of a valuation ring.  If $M_\eta$ is a logarithmic structure on $\eta$, we refer to the pushforward logarithmic structure $\mathcal O_S \mathop\times_{j_\ast \mathcal O_\eta} j_\ast M_\eta$ as the \emph{maximal} extension of the logarithmic structure of $\eta$.
\end{definition}

We again caution that the maximal extension $M_S$ of the logarithmic structure of $\eta$ may fail to be quasicoherent.  This will not be a problem, since we will only be interested in maps \emph{out} of $S$ with its maximal logarithmic structure, and one can make sense of maps into a logarithmic scheme, or moduli problem over logarithmic schemes, from an arbitrary locally ringed space with logarithmic structure.

\begin{theorem}
Let $f : X \to Y$ be a morphism of logarithmic schemes.  The morphism of schemes $\undernorm f : \undernorm X \to \undernorm Y$ underlying $f$ satisfies the valuative criterion for properness if and only if $f$ has the right lifting property with respect to the inclusion of the generic point in the spectrum of a valuation ring with maximal logarithmic structure:
\begin{equation} \label{eqn:23} \vcenter{\xymatrix{
	\eta \ar[r] \ar[d] & X \ar[d]^f \\
	S \ar@{-->}[ur] \ar[r] & Y
} } \end{equation}
The valuative criterion also applies if we assume that the generic logarithmic structure of $S$ is valuative.
\end{theorem}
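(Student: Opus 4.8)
The plan is to prove the statement by reducing the valuative criterion for the scheme morphism $\undernorm f$ to the logarithmic lifting problem, exploiting the fact that the maximal extension is designed precisely so that the logarithmic data extends for free. The key observation is that giving a lift in the logarithmic square amounts to two pieces of data: a lift of underlying schemes, and an extension of logarithmic structures compatible with $f$. The maximal logarithmic structure $M_S = \mathcal O_S \mathop\times_{j_\ast \mathcal O_\eta} j_\ast M_\eta$ on $S$ is characterized by a universal property: for any logarithmic scheme $Z$ over $S$ whose restriction to $\eta$ receives a map from $M_\eta$, the logarithmic structure extends uniquely over all of $S$. This is exactly the right adjoint (pushforward) property recalled just above the statement.

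First I would set up the correspondence between the two lifting problems. Suppose $\undernorm f$ satisfies the valuative criterion for properness. Given a logarithmic square as in the statement, I forget logarithmic structures to obtain a square of underlying schemes; the valuative criterion produces a lift $\undernorm S \to \undernorm X$ of underlying schemes, unique because $\undernorm f$ is separated (properness includes separatedness). It then remains to upgrade this to a morphism of logarithmic schemes over $Y$. Here I would use the universal property of the maximal extension: the logarithmic structure $M_X$ pulls back along the generic lift $\eta \to X$ to a logarithmic structure on $\eta$ equipped with a map from (the pullback of) $M_X$, and since $M_S$ is the maximal extension, any such logarithmic datum defined over $\eta$ extends uniquely to $S$ in a way compatible with the map to $Y$. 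This gives the desired logarithmic lift $S \to X$.

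Conversely, if $f$ has the stated right lifting property, I would recover the valuative criterion for $\undernorm f$ by equipping a given scheme-theoretic square over the valuation ring with the maximal logarithmic structure on $S$ and the pullback logarithmic structure on $\eta$, applying the hypothesis to extract a logarithmic lift, and then forgetting logarithmic structures to obtain the desired scheme-theoretic lift. The uniqueness needed for separatedness is handled identically, with the adjunction guaranteeing that distinct logarithmic lifts would restrict to distinct generic data.

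The main obstacle I expect is verifying that the maximal extension genuinely has the universal property needed to produce the logarithmic lift canonically, \emph{without} any coherence hypothesis on $M_S$. This is precisely the subtlety flagged in the paragraph preceding the statement: the pushforward of a quasicoherent logarithmic structure need not be quasicoherent, so one cannot invoke the usual theory of coherent charts. The crux is therefore to check that maps out of an arbitrary (possibly non-quasicoherent) logarithmic scheme behave well---i.e. that the functor of points of $X$ in the logarithmic category is insensitive to the pathology of $M_S$---so that the adjunction $\Hom(S, X) \leftrightarrow \Hom(\eta, X)$ restricted over $Y$ is a bijection. Making this precise, rather than the formal bookkeeping of the two-way implication, is where the real work lies.
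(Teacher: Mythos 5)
Your proposal is correct and takes essentially the same route as the paper: both arguments shuttle lifts between the scheme category and the logarithmic category via the pullback--pushforward adjunction, using the universal property of the maximal extension to upgrade a scheme-theoretic lift to a logarithmic one (and adjunction-uniqueness to handle the uniqueness of lifts). The one difference is emphasis: the "real work" you flag about non-quasicoherence is dispatched in the paper by the one-line observation that morphisms out of an arbitrary logarithmic scheme still make sense, since the adjunction is purely formal at the level of sheaves of monoids, so no coherence hypothesis is ever needed.
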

\begin{proof}
Consider a lifting problem~\eqref{eqn:19}:
\begin{equation} \label{eqn:19} \vcenter{\xymatrix{
	\undernorm \eta \ar[r] \ar[d] & \undernorm X \ar[d] \\
	\undernorm S \ar[r] \ar@{-->}[ur] & \undernorm Y
}} \end{equation}
Let $M_\eta$ be the logarithmic structure on $\undernorm\eta$ pulled back from $X$ and let $M'_S$ be the logarithmic structure on $\undernorm S$ pulled back from $Y$.  Let $M_S$ be the maximal extension of $M_\eta$ to $S$.  Let $N_\eta$ be an extension of the logarithmic structure $M_\eta$, and let $N_S$ be the maximal extension of $N_\eta$ to $S$.  By definition, we have $M_S = N_S \mathop\times_{j_\ast N_\eta} j_\ast M_\eta$.

The commutative square~\eqref{eqn:19} gives a morphism $M'_\eta \to M_\eta$, from which we obtain $M'_S \to M_S$ by adjunction.  We then have a commutative diagram~\eqref{eqn:20} of solid arrows:
\begin{equation} \label{eqn:20} \vcenter{\xymatrix@R=15pt@C=8em{
	(\eta, N_\eta) \ar[r] \ar[d] & (\eta, M_\eta) \ar[r] \ar[d] & X \ar[dd] \\
	(S, N_S) \ar@{-->}[urr]^(.35)h \ar[r] & (S, M_S) \ar@{-->}[ur]_(.45)g \ar[d] \\
	& (S, M'_S)  \ar@{-->}[uur]_(.45){g'} \ar[r] & Y
}} \end{equation}	
Lifting diagram~\eqref{eqn:19} in the category of schemes is equivalent to finding a lifting $h$ in diagram~\eqref{eqn:20}.  We claim that to produce any of the three dashed arrows in diagram~\eqref{eqn:20} is equivalent.  Indeed, $h$ determines $g$ and $g'$ by composition.  But $h$ determines $g$ by the observation that $M_S = N_S \mathop\times_{j_\ast N_\eta} j_\ast M_\eta$.  Likewise, $g$ determines $g'$ by the universal property of the pulled back logarithmic structure $M'_S$.  

Since liftings $g'$ induce liftings $h$, we conclude that if $\undernorm X \to \undernorm Y$ satisfies the valuative criterion for properness then~\eqref{eqn:23} has a lift whenever the logarithmic structure on $S$ is the maximal extension of a logarithmic structure on $\eta$.  Conversely if we have lifting whenever the logarithmic structure is valuative then we can choose $N_S$ above to be valuative (since there is always a map from the logarithmic structure $M_\eta$ to a valuative one) and then we obtain a lift of~\eqref{eqn:19}, proving that $\undernorm X \to \undernorm Y$ satisfies the valuative criterion for properness.
\end{proof}

While it would be possible to formulate a version of this criterion that involves only quasicoherent logarithmic strutures, this will not be necessary for our applications.

\begin{definition}
We say that a homomorphism of integral monoids $f : \overnorm M \to \overnorm N$ is \emph{relatively valuative} if, for every $x \in \overnorm M^{\rm gp}$, if $f(x) = 0$, then either $x \in \overnorm M$ or $-x \in \overnorm M$.
\end{definition}

\begin{remark}
The terminology is motivated from the case where $\overnorm N = 0$, in which the condition says that every $x \in \overnorm M^{\rm gp}$ or its inverse appears in $\overnorm M$.  These are the sorts of monoids that appear as the non-negative elements in the value groups of valuation rings.
\end{remark}

\begin{proposition} \label{prop:rel-val}
Suppose that $S$ is the spectrum of a valuation ring, with the maximal logarithmic structure $M_S$ extending a logarithmic structure at its generic point.  Let $j : \eta \to S$ denote the inclusion of the generic point.  Then  the generization map $\overnorm M_S \to j_\ast \overnorm M_\eta$ is relatively valuative.
\end{proposition}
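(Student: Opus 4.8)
The plan is to compute the maximal extension explicitly and then let the single defining axiom of a valuation ring do all the work. Write $R$ for the valuation ring and $K$ for its fraction field, so that $\undernorm S = \operatorname{Spec} R$ and $\undernorm \eta = \operatorname{Spec} K$. Since $\undernorm S$ is local, every sheaf may be read off from its stalk at the closed point $s$, and unwinding Definition~\ref{def:maximal} identifies the stalk of $M_S$ at $s$ with the submonoid
\[
M_S = \{\, m \in M_\eta : \varepsilon(m) \in R \,\} \subseteq M_\eta
\]
of those sections of $M_\eta$ whose image in $K$ already lies in $R$; here I use that the stalk of $j_\ast M_\eta$ at $s$ is $M_\eta$ and that the fibre product defining the maximal extension is taken over $\mathcal O_\eta = K$. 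Under this identification the structure map is $m \mapsto \varepsilon(m)$, the units are $\varepsilon^{-1}(R^\ast)$, and the generization map $\overnorm M_S \to j_\ast \overnorm M_\eta$ is, on the stalk at $s$, simply the homomorphism $\overnorm M_S \to \overnorm M_\eta$ induced by the inclusion $M_S \hookrightarrow M_\eta$.

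Next I would analyse the kernel of the induced group homomorphism $g \colon \overnorm M_S^{\rm gp} \to \overnorm M_\eta^{\rm gp}$. Because $M_S$ is a submonoid of the integral monoid $M_\eta$, the inclusion induces an injection $M_S^{\rm gp} \hookrightarrow M_\eta^{\rm gp}$, and $g$ is obtained from it by passing to the quotients $\overnorm M^{\rm gp} = M^{\rm gp}/\mathcal O^\ast$ of the exact sequence of Section~\ref{sec:logstr}, using $\mathcal O_S^\ast = R^\ast$ and $\mathcal O_\eta^\ast = K^\ast$. Fix $\bar x \in \overnorm M_S^{\rm gp}$ with $g(\bar x) = 0$ and lift it to $x \in M_S^{\rm gp} \subseteq M_\eta^{\rm gp}$. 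The vanishing of $g(\bar x)$ says exactly that the image of $x$ in $\overnorm M_\eta^{\rm gp} = M_\eta^{\rm gp}/K^\ast$ is trivial, i.e. that $x$ is the class of a unit: there is $c \in K^\ast$ with $x = \varepsilon^{-1}(c)$ in $M_\eta^{\rm gp}$. Any two lifts of $\bar x$ differ by an element of $R^\ast \subseteq K^\ast$, so $c$ is well defined up to $R^\ast$ and this step is independent of the choice of lift.

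At this point the valuation hypothesis finishes the argument. Since $R$ is a valuation ring of $K$, either $c \in R$ or $c^{-1} \in R$. In the first case $\varepsilon^{-1}(c)$ lies in $M_S$ because its image $c$ lies in $R$, so its class in $\overnorm M_S$ is a lift of $\bar x$ and hence $\bar x \in \overnorm M_S$; in the second case the same reasoning applied to $c^{-1}$ gives $-\bar x \in \overnorm M_S$. Either way the defining condition of a relatively valuative homomorphism holds. The underlying mechanism is that $\ker g$ is carried isomorphically onto the value group $K^\ast/R^\ast$, whose total order \emph{is} the valuation axiom. I expect the only genuine work to be in the first paragraph: pinning down the stalk of the (possibly non‑quasicoherent) maximal extension and keeping the two unit groups $R^\ast \subseteq K^\ast$ straight, since it is precisely the discrepancy between them --- and nothing about the structure of $M_\eta$ --- that produces the kernel on which the valuative condition is tested.
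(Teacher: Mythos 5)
Your proof is correct and takes essentially the same route as the paper: both identify the maximal extension as the submonoid of $M_\eta$ consisting of sections whose image under $\varepsilon$ lies in $R$, recognize the kernel of $\overnorm M_S^{\rm gp} \to \overnorm M_\eta^{\rm gp}$ as the value group $K^\ast/R^\ast$ (with positive cone the valuation monoid $(R\setminus\{0\})/R^\ast \subseteq \overnorm M_S$), and conclude from the valuation-ring dichotomy $c \in R$ or $c^{-1} \in R$. The paper phrases this at the level of monoids while you verify it element by element, but the content is identical.
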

\begin{proof}
We observe that, by the construction of $M_S = \mathcal O_S \mathop\times_{j_\ast \mathcal O_\eta} j_\ast \overnorm M_\eta$, the kernel of $\overnorm M_S^{\rm gp} \to j_\ast \overnorm M_\eta^{\rm gp}$ is the associated group of the monoid
\begin{equation*}
(\mathcal O_S \mathop\times_{j_\ast \mathcal O_\eta} j_\ast \mathcal O_\eta^\ast) / \mathcal O_S^\ast ,
\end{equation*}
which is simply the valuation monoid of $S$.  It follows immediately that the generization map is relatively valuative.
\end{proof}

\subsection{Logarithmic structures of curves in degenerating families}

Proposition~\ref{prop:ss} is technical, but important in our proof that the Abel--Jacobi map satisfies the valuative criterion for properness (Lemma~\ref{lem:valprop}).  We begin with a local cohomology calculation that will be used in the proof of Proposition~\ref{prop:ss}.  The reader who prefers to do so may omit these preliminaries and rely instead on \cite[Theorem~4.1]{Holmes-Neron} for the same result.

\numberwithin{lemma}{subsection}
\numberwithin{theorem}{subsection}
\numberwithin{equation}{lemma}
\begin{lemma}
Suppose that $A$ is a ring and $t \in A$ is not a zero divisor.  Let $C = \Spec A[x,y] / (xy - t)$, let $i : Z \subset C$ be the inclusion of the vanishing locus of $x$ and $y$, and let $U \subset C$ be its complement.  Then $\mathcal O_C \to j_\ast \mathcal O_U$ is an isomorphism.
\end{lemma}
\begin{proof}
For a quasicoherent sheaf $F$ on $C$ and a closed subset $Z$ of $C$, let $\mathcal H^0_Z(F)$ denote the subsheaf of sections of $F$ with support in $Z$.  We have an exact sequence~\cite[Corollaire~I.2.11]{sga2}:
\begin{equation*}
0 \to \mathcal H^0_Z(\mathcal O_C) \to \mathcal O_C \to \mathcal O_{C-Z} \to \mathcal H^1_Z(\mathcal O_C) \to 0
\end{equation*}
We must show that $\mathcal H^0_Z(\mathcal O_C) = \mathcal H^1_Z(\mathcal O_C) = 0$.  The vanishing of $\mathcal H^0_Z(\mathcal O_C)$ is clear, because $\mathcal O_C$ has no nonzero torsion.  Let $Y$ be the vanishing locus of $y$ in $C$.  As $\mathcal H^0_Z$ has an exact left adjoint, we have a spectral sequence $\mathcal H^p_Z \mathcal H^q_Y (\mathcal O_C) \Rightarrow \mathcal H^{p+q}_Z(\mathcal O_C)$.   It will therefore suffice to show 
\begin{equation}
\mathcal H^0_Z \mathcal H^1_Y (\mathcal O_C) = 0 \qquad \text{and} \qquad \mathcal H^1_Z \mathcal H^0_Y(\mathcal O_C) = 0.
\end{equation}
As $y$ is not a zero divisor in $A[x,y]/ (xy - t)$, we have $\mathcal H^0_Y(\mathcal O_C) = 0$, which gives the latter vanishing above.  We compute $\mathcal H^1_Y(\mathcal O_C)$ with the following exact sequence (using the quasicoherence of the local cohomology sheaves~\cite[Corollaire~II.4]{sga2}):
\begin{equation}
0 \to A[x,y] / (xy - t) \to A[x,y,y^{-1}] / (x - ty^{-1}) \to \mathcal H^1_Y(\mathcal O_C) \to 0
\end{equation}
It is convenient to grade these rings with $x$ having degree $1$ and $y$ having degree $-1$, so that we have the following formula for $\mathcal H^1_Y(\mathcal O_C)$:
\begin{equation*}
\mathcal H^1_Y(\mathcal O_C) = A[y,y^{-1}] / A[ty^{-1},y] = \sum_{n = 1}^\infty  y^{-n} A / y^{-n} t^n A
\end{equation*}
The element $x \in \mathcal O_C$ acts as $ty^{-1}$.  We verify that it is not a zero divisor.  Since $x$ is homogeneous in our grading, it is sufficient to show that $x$ does not act as a zero divisor on any graded piece of $\mathcal H^1_Y(\mathcal O_C)$.  Let $y^{-n} c \in y^{-n} A$ represent an arbitrary element of $y^{-n} A / y^{-n} t^n A$.  If $x c y^{-n} = 0$ then $c t = b t^{n+1}$ for some $b \in A$.  But $t$ is not a zero divisor in $A$, so this implies $c = b t^n$, whence $c y^{-n}$ represents $0$ in $\mathcal H^1_Y(\mathcal O_C)$.  It follows that $x$ is not a zero divisor in $\mathcal H^1_Y(\mathcal O_C)$, so $\mathcal H^0_Z \mathcal H^1_Y(\mathcal O_C) = 0$, as required.
\end{proof}

\setcounter{theorem}{\value{lemma}}
\begin{remark}
In the case where $A$ is noetherian, we could observe that $x$ and $y$ form a regular sequence, and therefore their vanishing locus has depth~$2$ \cite[Proposition~III.3.3]{sga2}.
\end{remark}

\numberwithin{corollary}{subsection}
\setcounter{corollary}{\value{theorem}}
\begin{corollary} \label{cor:local-cohom}
Let $C$ be a nodal curve over $S$ and let $j : U \to C$ be the inclusion of the open subset of $C$ where $C$ is smooth over $S$.  Assume that every node of $Z$ has a local equation $xy - t$ where $t$ is not a zero divisor in $\mathcal O_C$.  Then $\mathcal O_C \to j_\ast \mathcal O_U$ is an isomorphism.
\end{corollary}
\begin{proof}
The assertion is local in the \'etale topology, so we may assume that $S = \Spec A$ and that $C = \Spec A[x,y] / (xy - t)$ where $t$ is not a zero divisor in $A$.  Then the corollary is precisely the assertion of the lemma.
\end{proof}

\numberwithin{theorem}{subsection}
\setcounter{theorem}{\value{corollary}}
\numberwithin{equation}{theorem}
\begin{proposition} \label{prop:ss}
Let $C$ be a logarithmic curve over the spectrum, $S$, of a valuation ring.  Assume that $S$ has the maximal logarithmic structure extending a valuative logarithmic structure on its generic point, $\eta$.  Let $j : C_\eta \to C$ be the inclusion of the generic fiber.  Then the following maps are isomorphisms:
\begin{gather}
M_C \to \mathcal O_C \mathop\times_{j_\ast \mathcal O_{C_\eta}} M_{C_\eta} \label{eqn:25} \\
M_C^{\rm gp} \to j_\ast M_{C_\eta}^{\rm gp} \label{eqn:24}
\end{gather}
In other words, the logarithmic structure of $C$ is the maximal extension of that of $C_\eta$.
\end{proposition}
\begin{proof} 
Our assertion is \'etale-local on $S$, so we may assume $S$ is the spectrum of a \emph{henselian} valuation ring.  

Throughout the proof, we will write $j : \eta \to S$ denote the inclusion of the generic point, as well as the inclusion of the generic fiber $C_\eta \subset C$.  Note that $j_\ast (\mathcal O_\eta^\ast) / \mathcal O_S^\ast$ is the value group of $S$; we denote it $V$.

We write $M_C$ for the logarithmic structure of $C$ and $N_C$ for the pushforward logarithmic structure $\mathcal O_C \mathop\times_{j_\ast \mathcal O_{C_\eta}} j_\ast M_{C_\eta}$ extending $M_{C_\eta}$.  By the universal property of $N_C$, there is a morphism $M_C \to N_C$, which we aim to verify is an isomorphism.  It will be sufficient to check that $\overnorm M_C \to \overnorm N_C$ is an isomorphism.

We check first that~\eqref{eqn:24} is an isomorphism.  We have a commutative diagram with exact rows:
\begin{equation} \label{eqn:15} \vcenter{ \xymatrix{
0 \ar[r] & \mathcal O_C^\ast \ar[r] \ar[d] & M_C^{\rm gp} \ar[r] \ar[d] & \overnorm M_C^{\rm gp} \ar[r] \ar[d] & 0 \\
0 \ar[r] & j_\ast \mathcal O_{C_\eta}^\ast \ar[r] & j_\ast M_{C_\eta}^{\rm gp} \ar[r] & j_\ast \overnorm M_{C_\eta}^{\rm gp} \ar[r] & R^1 j_\ast \mathcal O_{C_\eta}^\ast
}} \end{equation}

Let $\mathscr E$ denote the kernel of $\overnorm M_C^{\rm gp} \to j_\ast \overnorm M_{C_\eta}^{\rm gp}$ and let $\mathscr D$ denote the cokernel of $\mathcal O_C^\ast \to j_\ast \mathcal O_{C_\eta}^\ast$.  We will show that~\eqref{eqn:24} is an isomorphism by showing $\mathcal O_C^\ast \to j_\ast \mathcal O_{C_\eta}^\ast$ is injective (this is immediate), $\overnorm M_C^{\rm gp} \to j_\ast \overnorm M_{C_\eta}^{\rm gp}$ is surjective, and that the connecting homomorphism $\mathscr E \to \mathscr D$ is an isomorphism.

We verify that the $\overnorm M_C^{\rm gp} \to j_\ast \overnorm M_{C_\eta}^{\rm gp}$ is surjective.  Indeed, let $\frak X_\xi \to \frak X_\eta$ be the contraction of dual graphs associated to a specialization $\eta \leadsto \xi$ in $S$.  A local section of $\overnorm M_C^{\rm gp}$ corresponds to function on a subgraph $\Gamma$ of $\frak X_\xi$ that is linear with integer slope along the edges, and takes values in $\overnorm M_\xi^{\rm gp}$.  Let $\Gamma'$ be the image of $\Gamma$ under the edge contraction $\frak X_\xi \to \frak X_\eta$.  A section of $j_\ast \overnorm M_{C_\eta}^{\rm gp}$ corresponds to a function on $\Gamma'$ that is linear with integer slope along the edges and takes values in $\overnorm M_\eta^{\rm gp}$.  Locally, $\Gamma$ is either an edge or a vertex and it is immediate from the surjectivity of $\overnorm M_\eta^{\rm gp} \to \overnorm M_\xi^{\rm gp}$ that linear functions on $\Gamma'$ valued in $\overnorm M_\eta^{\rm gp}$ lift to linear functions on $\Gamma$ valued in $\overnorm M_\xi^{\rm gp}$.

Now we check that $\mathscr E \to \mathscr D$ is an isomorphism.  Sections of $\mathscr E$ correspond to functions on the dual graph of the special fiber of $C$, valued in the kernel of $M_S^{\rm gp} \to M_\eta^{\rm gp}$ and linear along the edges with integral slope.  By definition of the maximal extension $M_S^{\rm gp}$, the kernel of the restriction $M_S^{\rm gp} \to M_\eta^{\rm gp}$ is the value group $V$.  Thus sections of $\mathscr E$ correspond to piecewise linear functions on the dual graph, valued in $V$.

The sections of $\mathscr D$ correspond to invertible sheaves $\mathscr L$ on $C$ with specified trivialization $\sigma$ along $C_\eta$.  Equivalently, $\mathscr D$ is the sheaf of Cartier divisors on $C$ whose support is disjoint from $C_\eta$.  We caution that, because $C$ is not necessarily noetherian, the coefficient of a component $D$ of the special fiber, which is obtained by evaluating $\sigma$ at the generic point $\omega$ of $D$, lies in the value group of the valuation ring $\mathcal O_{\omega}$.  As $C$ is smooth over $S$ at $\omega$, the value group of $\mathcal O_\omega$ is $V$ for all components of the special fiber.  Near a node $Z$ with local equation $xy = t$, the trivialization $\sigma$ must take the form $x^a y^b \sigma_0$, for some integers $a$ and $b$, where $\sigma_0$ is a local trivialization of $\mathscr L$.  Indeed, if $a$ and $b$ are the orders of zero or pole of $\sigma$ along the branches $\{ x = 0 \}$ and $\{ y = 0 \}$ of the node, then $x^{-a} y^{-a} \sigma_0^{-1} \sigma$ is a unit of $\mathcal O_{C-Z}$, hence extends uniquely to a unit of $\mathcal O_C$ by Proposition~\ref{cor:local-cohom}.  If this node connects components $\omega$ and $\psi$ then $\ord \sigma(\omega) = \ord \sigma(\psi) + (b-a)\delta$, where $\delta$ is the image of $t$ in the value group $V$.  Therefore sections of $\mathscr D$ also correspond to functions on the dual graph of the special fiber of $C$ that are valued in $V$ and are linear with integral slope along the edges.

We leave it to the reader to verify that the map $\mathscr E \to \mathscr D$ constructed above commutes with the identifications of $\mathscr E$ and $\mathscr D$ with sheaves of piecewise linear functions and is therefore an isomorphism.  This concludes the demonstration that~\eqref{eqn:24} is an isomorphism.

To prove that~\eqref{eqn:25} is an isomorphism, it is sufficient to show that $\overnorm M_C \to \overnorm N_C$ is an isomorphism.  Note first that we have 
\begin{equation*}
\overnorm M_C^{\rm gp} \to \overnorm N_C^{\rm gp} \subset j_\ast (M_C^{\rm gp}) / \mathcal O_C^\ast
\end{equation*} 
and we have just seen that the composition is an isomorphism.  It follows that $\overnorm N_C^{\rm gp} = \overnorm M_C^{\rm gp}$ and therefore that sections of $\overnorm N_C$ correspond to piecewise linear functions on the dual graph of $C$.  It is immediate that~\eqref{eqn:25} holds away from the nodes of the special fiber of $C$ over $S$, because there we have $\overnorm M_C = \pi^{-1} \overnorm M_S$.  On the other hand, a local section of $\overnorm M_C^{\rm gp}$ lies in $\overnorm M_C$ if and only if its restriction to the complement of the nodes lies in $\overnorm M_C$.  This can be seen from the interpretation in terms of piecewise linear functions on the dual graph, where a section valued in $\overnorm M_S^{\rm gp}$ is $\geq 0$ if and only if its restriction to each vertex is $\geq 0$.  We conclude that $\overnorm N_C$ coincides with $\overnorm M_C$ away from the nodes of the special fiber of $C$, so they agree everywhere.

%Since sections of $\overnorm M_C^{\rm gp}$ correspond to piecewise linear functions on the tropicaliation of $C$, we may observe that $\mathscr E$ consists of those functions that take values in the value group, $j_\ast (\mathcal O_\eta^\ast) / \mathcal O_S^\ast$.  Viewing $\mathscr D$ as the group of line bundles on $C$ trivialized over $C_\eta$, the map $\mathscr E \to \mathscr D$ is the restriction of the map sending a piecewise linear function $\alpha$ to $\mathcal O_C(\alpha)$.  On the other hand, viewing $\mathscr D$ as the sheaf of divisors on $C$ not meeting $C_\eta$, we have an inverse $\mathscr D \to \mathscr E$ sending a divisor $D$ to the piecewise linear function $\alpha$ where $\alpha(v) \in j_\ast (\mathcal O_\eta^\ast) / \mathcal O_S^\ast$ is the multiplicity of $D$ on the component of the special fiber corresponding to $v$.  This shows that $\mathscr E \to \mathscr D$ is an isomorphism and concludes the proof.
\end{proof}

\section{The logarithmic compactification of the Abel--Jacobi section} 
\label{sec:log-aj}

\subsection{Tropical line bundles}
\label{sec:trop-lines}

\begin{definition}
For any logarithmic scheme $X$, we define
\begin{align*}
\logGm(X) & = \Gamma(X, M_X^{\rm gp}) \\
\tropGm(X) & = \Gamma(X, \overnorm M_X^{\rm gp})
\end{align*}
and call these, respectively, the \emph{logarithmic multiplicative group} and the \emph{tropical multiplicative group}.  We also call $\tropGm$ the \emph{tropical line}.
\end{definition}

\begin{remark}
As described in Section~\ref{sec:tropcurves}, maps from a curve $C$ over $S$ into $\tropGm$ correspond to piecewise linear functions on the tropicalization, valued in $\overnorm M_S^{\rm gp}$.  Notably, there is no balancing condition, so we use the term `tropical' here with some license.

It seems difficult to characterize balanced functions on the tropicalization sheaf theoretically on $C$.  The image of $\Gamma(C, M_C^{\rm gp}) \to \Gamma(C, \overnorm M_C^{\rm gp})$ consists of some, but in general not all, of the balanced functions on $C$; there is no subsheaf of $\overnorm M_C^{\rm gp}$ whose sections correspond to balanced piecewise linear functions on the tropicalization, because balancing is not a local condition in the \'etale (or Zariski) topology on $C$.  In \cite{logpic1}, balanced functions are characterized as the kernel of the homomorphism $\Gamma(C, \overnorm M_C^{\rm gp}) \to \NS(C)$, where $\NS(C)$ is the N\'eron--Severi group.  This definition seems to generalize well (for example, to surfaces), but it is not a local condition in the Zariski topology on $C$.
\end{remark}

Neither $\logGm$ nor $\tropGm$ is representable by an algebraic stack with a logarithmic structure.  However, $\logGm$ has a logarithmic modification that is representable by $\mathbf P^1$ with its toric logarithmic structure~\cite[Proposition~1]{RW}.  Likewise, $\tropGm$ has a logarithmic modification that is representable by the stack $[\mathbf P^1 / \Gm]$.

\begin{remark} \label{rmk:smoothing}
It may seem odd that a logarithmic modification of the \emph{functor} $\tropGm$ should be representable by the \emph{stack} $[\mathbf P^1 / \Gm]$, but this is a typical phenomenon in logarithmic geometry.  When $[\mathbf P^1 / \Gm]$ is given the right logarithmic structure, it actually does represent a functor and not merely an algebraic stack, because the logarithmic structure serves to rigidify the moduli problem.  Put another way, when $\mathbf P^1$ is given its toric logarithmic structure, $\Gm$ acts on it without fixed points (in a logarithmic sense).  Put still another way, the map from $[\mathbf P^1 / \Gm]$ to Olsson's stack of logarithmic structures~\cite{Olsson-Log} is representable.
\end{remark}

A map $X \to \tropGm$ corresponds to a section $\alpha$ of $\overnorm M_X^{\rm gp}$, and therefore to a $\Gm$-torsor $\mathcal O_X^\ast(\alpha)$ on $X$.  The identity map $\tropGm \to \tropGm$ should therefore induce a universal $\Gm$-torsor on $\tropGm$.  The following lemma characterizes it.

\begin{proposition} \label{prop:univ-torsor}
The universal $\Gm$-torsor over $\tropGm$ is $\logGm$.
\end{proposition}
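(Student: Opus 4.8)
The plan is to unwind both sides into functors on logarithmic schemes and to recognize the exact sequence
\[
0 \to \mathcal O_X^\ast \to M_X^{\rm gp} \to \overnorm M_X^{\rm gp} \to 0
\]
as the statement that $\logGm \to \tropGm$ is a $\Gm$-torsor, and indeed the universal one.

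First I would spell out the universal torsor. By definition a map $X \to \tropGm$ is a section $\alpha \in \Gamma(X, \overnorm M_X^{\rm gp})$, and the construction recalled just before the statement attaches to it the $\Gm$-torsor whose sections are the lifts of $\alpha$ along $M_X^{\rm gp} \to \overnorm M_X^{\rm gp}$. The universal $\Gm$-torsor is then the functor sending such an $(X, \alpha)$ to this torsor; equivalently, its $X$-points lying over $\alpha$ are the pairs $(\alpha, \tilde\alpha)$ with $\tilde\alpha \in \Gamma(X, M_X^{\rm gp})$ mapping to $\alpha$.

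Next I would compare this with $\logGm$. The map $\logGm \to \tropGm$ is induced by the epimorphism $M^{\rm gp} \to \overnorm M^{\rm gp}$, so an $X$-point of $\logGm$ is an element $\tilde\alpha \in \Gamma(X, M_X^{\rm gp})$ lying over its image $\alpha$. Thus the fibered product $\logGm \times_{\tropGm} X$ formed along a section $\alpha$ is exactly the set of lifts of $\alpha$, which is the torsor above. The $\Gm = \mathcal O^\ast$-action on $\logGm$ induced by the inclusion $\mathcal O_X^\ast \hookrightarrow M_X^{\rm gp}$ matches the torsor action, and the displayed exact sequence shows that this action is simply transitive on each fiber. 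This produces the desired identification of $\logGm \to \tropGm$ with the universal torsor.

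The point requiring care — more a matter of framework than a genuine obstacle — is that $\tropGm$ is not representable, so \emph{$\Gm$-torsor over $\tropGm$} must be read functorially: one verifies the torsor condition after pulling back along an arbitrary $X \to \tropGm$. This reduces to the claim that $M^{\rm gp} \to \overnorm M^{\rm gp}$ is an epimorphism of \'etale sheaves with kernel $\mathcal O^\ast$, i.e. that sections of $\overnorm M^{\rm gp}$ lift \'etale-locally, which is immediate from $\overnorm M = M/\varepsilon^{-1}\mathcal O^\ast$ being the \'etale quotient. A cleaner packaging of the same computation is to note that the exact sequence exhibits the classifying map $\tropGm \to \BGm$ of the extension $0 \to \Gm \to \logGm \to \tropGm \to 0$, so that the universal torsor is the pullback $\tropGm \times_{\BGm} \mathrm{pt}$, which the sequence identifies with $\logGm$.
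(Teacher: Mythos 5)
Your proof is correct and is essentially the paper's own argument: the paper likewise identifies the $\Gm$-torsor attached to a section $\alpha$ of $\overnorm M_X^{\rm gp}$ with the sheaf of lifts of $\alpha$ to $M_X^{\rm gp}$, and observes this is the same as the sheaf of lifts of $X \to \tropGm$ to $\logGm$. Your additional care about the functorial meaning of ``torsor'' over the non-representable $\tropGm$ and the \'etale-local lifting is a fleshing-out of what the paper leaves implicit, not a different route.
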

\begin{proof}
Suppose that $X \to \tropGm$ corresponds to a section $\alpha$ of $\overline M_X^{\rm gp}$.  The associated $\Gm$-torsor is the sheaf of lifts of $\alpha$ to $M_X^{\rm gp}$.  This is precisely the same as the sheaf of lifts of $X \to \tropGm$ to $\logGm$.
\end{proof}

\begin{definition}
Let $S$ be a logarithmic scheme.  A \emph{(piecewise linear) tropical line bundle} over $S$ is a $\tropGm$-torsor on $S$ and a \emph{logarithmic line bundle} is a $\logGm$-torsor over $S$.\footnote{In \cite{logpic1}, the definition of a logarithmic line bundle includes an additional bounded monodromy condition that does not concern us here.}  We will drop the words ``piecewise linear'' in this paper, since we will not encounter any other sort of tropical line bundle here.
\end{definition}

In other words, a tropical line bundle is a torsor under $\overline M_S^{\rm gp}$.  We note that the sections of a tropical line bundle $\mathcal P$ over $S$ are equipped with a partial order, in which $\beta \leq \gamma$ if there is some $\delta \in \Gamma(S, \overline M_S)$ such that $\beta + \delta = \gamma$.

If $P$ is a logarithmic line bundle over $S$ then $P$ carries a natural $\Gm$-action and $P/\Gm$ is a tropical line bundle over $S$.  This is the geometric manifestation of the morphism on first cohomology:
\begin{equation*}
H^1(S, M_S^{\rm gp}) \to H^1(S, \overnorm M_S^{\rm gp})
\end{equation*}

% \begin{example} \label{ex:div-to-trop-line}
% Let $(\pi : C \to S, \alpha) \in \Div$ be a logarithmic divisor.  Then $\alpha \in \Gamma(S, \pi_\ast(\overnorm M_C^{\rm gp}) / \overnorm M_S^{\rm gp})$.  The image of $\alpha$ under the coboundary map in the exact sequence
% \begin{equation*}
% \Gamma(S, \pi_\ast \overnorm M_C^{\rm gp}) \to \Gamma(S, \pi_\ast(\overnorm M_C^{\rm gp})/\overnorm M_S^{\rm gp}) \to H^1(S, \overnorm M_S^{\rm gp})
% \end{equation*}
% induces a family of tropical lines over $S$.
% \end{example}

% The following proposition is now immediate, but it gives a useful geometric perspective on $\Div_{g,n}$:

% \begin{proposition} \label{prop:trop-defn}
% $\Div$ is equivalent to the stack whose sections over a logarithmic scheme $S$ consist of the following data:
% \begin{enumerate}[label=(\roman{*})]
% \item a logarithmic curve $C$ over $S$,
% \item a tropical line $\mathcal P$ over $S$, and
% \item an $S$-morphism $C \to \mathcal P$.
% \end{enumerate}
% \end{proposition}

\subsection{Logarithmic divisors on logarithmic curves} \label{sec:logdiv}

\begin{definition} \label{def:logdiv}
Let $C$ be a logarithmic curve over a logarithmic scheme $S$.  A \emph{logarithmic divisor} on $C$ is a tropical line bundle $\mathcal P$ over $S$ and an $S$-morphism $C \to \mathcal P$.  We write $\Div$ for the stack in the strict \'etale topology on logarithmic schemes whose $S$-points are triples $(C, \mathcal P, \alpha)$ where $C$ is a logarithmic curve over $S$, where $\mathcal P$ is a tropical line bundle over $S$, and where $\alpha : C \to \mathcal P$ is an $S$-morphism.
\end{definition}

\begin{example} \label{ex:apwl}
Let $S$ be a logarithmic scheme whose underlying scheme is the spectrum of an algebraically closed field and let $C$ be a logarithmic curve over $S$.  Then $\Div(C/S)$ is the set of piecewise linear functions  on the tropicalization of $C$ that are linear with integer slope along the edges and take values in $\overline M_S^{\rm gp}$ (see Section~\ref{sec:pwl}), up to translation by constants.
\end{example}

The following proposition gives a more concrete formulation of Example~\ref{ex:apwl}:

\begin{proposition}
Let $\pi : C \to S$ be a family of logarithmic curves.  Then $\Div(C/S) = \Gamma\bigl(S, \pi_\ast (\overline M_C^{\rm gp}) / \overline M_S^{\rm gp}\bigr)$.
\end{proposition}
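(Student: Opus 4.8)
The plan is to identify both sides with a common description obtained from a strict étale cover of $S$ that trivializes the tropical line, reducing everything to the universal case of maps into $\tropGm$, which Section~\ref{sec:trop-lines} already identifies with sections of $\overline M_C^{\rm gp}$.

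First I would record the short exact sequence of strict étale sheaves on $S$
\[
0 \to \overline M_S^{\rm gp} \to \pi_\ast(\overline M_C^{\rm gp}) \to \pi_\ast(\overline M_C^{\rm gp})/\overline M_S^{\rm gp} \to 0 ,
\]
where the quotient is the defining one and injectivity of the first map follows from the local description of logarithmic curves: on each geometric fiber the comparison $\overline M_{S,s}^{\rm gp} \to \overline M_{C,x}^{\rm gp}$ is a split injection in all three local forms, so a section of $\overline M_S^{\rm gp}$ that pulls back to $0$ on $C$ is already $0$. Taking cohomology gives
\[
0 \to \Gamma(S, \overline M_S^{\rm gp}) \to \Gamma(C, \overline M_C^{\rm gp}) \to \Gamma\bigl(S, \pi_\ast(\overline M_C^{\rm gp})/\overline M_S^{\rm gp}\bigr) \xrightarrow{\ \partial\ } H^1(S, \overline M_S^{\rm gp}),
\]
and I recall that $H^1(S, \overline M_S^{\rm gp})$ classifies $\overline M_S^{\rm gp}$-torsors, i.e. tropical lines over $S$. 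The content of the proposition is then that $\partial$ sends a logarithmic divisor to the isomorphism class of its underlying tropical line.

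For the bijection I would argue with the Čech description. Given a logarithmic divisor $(C, \mathcal P, \alpha)$, choose a strict étale cover $\{U_i \to S\}$ on which $\mathcal P$ is trivialized, $\mathcal P|_{U_i} \simeq \tropGm$. Restricting $\alpha$ along each trivialization gives $S$-morphisms $\alpha_i : C|_{U_i} \to \tropGm$, which by the identification $\tropGm(X) = \Gamma(X, \overline M_X^{\rm gp})$ of Section~\ref{sec:trop-lines} are sections $\alpha_i \in \Gamma(\pi^{-1}U_i, \overline M_C^{\rm gp}) = \Gamma(U_i, \pi_\ast \overline M_C^{\rm gp})$. On overlaps the two trivializations of $\mathcal P$ differ by a section $c_{ij} \in \Gamma(U_{ij}, \overline M_S^{\rm gp})$, so $\alpha_i - \alpha_j = c_{ij}$ in $\Gamma(\pi^{-1}U_{ij}, \overline M_C^{\rm gp})$; hence the images of the $\alpha_i$ in the quotient sheaf agree on overlaps and glue to a global section, independent of the trivializations since changing them alters each $\alpha_i$ by a section of $\overline M_S^{\rm gp}$. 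Conversely, a global section $s$ of the quotient is represented over a cover by $\alpha_i \in \Gamma(\pi^{-1}U_i, \overline M_C^{\rm gp})$ with $\alpha_i - \alpha_j \in \Gamma(U_{ij}, \overline M_S^{\rm gp})$; the cocycle $c_{ij} = \alpha_i - \alpha_j$ defines the tropical line $\mathcal P = \partial(s)$, and the maps $\alpha_i : C|_{U_i} \to \tropGm$ glue against this cocycle into a single morphism $\alpha : C \to \mathcal P$. I would then check the two constructions are mutually inverse and natural in $S$, which is immediate from the cocycle bookkeeping and the fact that $\Div$ is a stack for the strict étale topology.

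I expect the main obstacle to be the core claim of the third paragraph: that a morphism $C \to \mathcal P$ into the non-representable $\tropGm$-torsor $\mathcal P$ is the same datum as a cocycle-compatible family of maps $C|_{U_i} \to \tropGm$. This requires pinning down what an $S$-morphism into a torsor means and commuting $\pi_\ast$ with the trivializing restrictions; concretely, one must verify that evaluating the pulled-back torsor on $C|_{U_i}$ recovers $\tropGm(C|_{U_i}) = \Gamma(\pi^{-1}U_i, \overline M_C^{\rm gp})$, so that sections of $\mathcal P$ along $C$ are measured by $\overline M_C^{\rm gp}$ rather than by $\pi^{-1}\overline M_S^{\rm gp}$. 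Once this compatibility is established, the remaining verifications are routine.
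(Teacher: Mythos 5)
Your proposal is correct and takes essentially the same route as the paper: the paper's (much terser) proof also locally trivializes $\mathcal P$, notes the trivialization is unique up to adding a section of $\overline M_S^{\rm gp}$, and identifies maps $C \to \mathcal P \simeq \tropGm$ with sections of $\overline M_C^{\rm gp}$ up to translation by $\overline M_S^{\rm gp}$. Your \v{C}ech cocycle bookkeeping and the exact-sequence framing with the connecting map to $H^1(S, \overline M_S^{\rm gp})$ simply make explicit the gluing that the paper leaves implicit.
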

\begin{proof}
Let $\mathcal P$ be a tropical line over $S$.  Locally in $S$ we may choose an isomorphism between $\mathcal P$ and $\tropGm$, which is unique up to addition of a section of $\overline M_S^{\rm gp}$.  Then a map $C \to \mathcal P \simeq \tropGm$ is identified with a global section of $\overline M_C^{\rm gp}$, up to translation by $\overline M_S^{\rm gp}$.
\end{proof}

It is possible to separate $\Div$ into components using several locally constant data.  The first is the genus of the curve in question.  Second, the slopes of the piecewise linear function introduced in Example~\ref{ex:apwl} on the legs of the tropicalization are locally constant in families.  If $\mathbf a = (a_1, \ldots, a_n)$ is a vector of integers, we write $\Div_{g,\mathbf a}$ for the component of the $S_n$ cover of $\Div$ where the curve has genus $g$, the marked points are labeled by the integers $1, \ldots, n$, and the outgoing slope of the piecewise linear function on the tropicalization on the $i$th leg is $a_i$.

\begin{theorem}\label{thm:divisalgebraic}
$\Div$ is representable by an algebraic stack with a logarithmic structure that is locally of finite presentation.
\end{theorem}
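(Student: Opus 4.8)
The plan is to study the forgetful morphism $\Div \to \mathfrak M$ to the stack of $n$-marked prestable logarithmic curves, which is logarithmically smooth, algebraic, and locally of finite presentation. Since $\mathfrak M$ is a logarithmic algebraic stack of this kind, it suffices to prove that this morphism is representable by logarithmic algebraic spaces and locally of finite presentation: concretely, that for every logarithmic scheme $S$ equipped with a strict morphism $S \to \mathfrak M$ classifying a logarithmic curve $\pi : C \to S$, the fiber product $\Div \mathop\times_{\mathfrak M} S = \Div(C)$ is representable by a logarithmic algebraic space, locally of finite presentation over $S$. Algebraicity of $\Div$ and the existence of its logarithmic structure then follow by descent along a strict smooth chart $U \to \mathfrak M$ by a logarithmic scheme. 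By the proposition identifying $\Div(C)$ with $\Gamma\bigl(S, \pi_\ast(\overnorm M_C^{\rm gp})/\overnorm M_S^{\rm gp}\bigr)$, the task becomes to represent the functor of sections of this sheaf on the strict \'etale site of $S$.

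First I would separate $\Div$ into locally closed pieces indexed by the discrete invariants that are constant in connected families: the genus $g$, the leg slopes $\mathbf a$, and, after stratifying $S$ by the combinatorial type of the tropicalization, the dual graph $\Gamma$ together with the integer slopes assigned to its edges. This reduction is what yields local finite presentation, since over each stratum only finitely many bounded graphs and slope vectors occur, and it reduces the representability question to a single fixed combinatorial type.

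For a fixed combinatorial type the sections of $\pi_\ast(\overnorm M_C^{\rm gp})/\overnorm M_S^{\rm gp}$ are piecewise linear functions with the prescribed slopes, determined by their values at the vertices in $\overnorm M_S^{\rm gp}$ modulo global constants, subject to the compatibility that along each edge the function increases by slope times smoothing parameter. This is exactly the datum of a morphism into the tropical line, whose universal $\Gm$-torsor is $\logGm$ by Proposition~\ref{prop:univ-torsor}. Since $\tropGm$ and $\logGm$ admit representable logarithmic modifications $[\mathbf P^1/\Gm]$ and $\mathbf P^1$, I would identify $\Div$, \'etale-locally over $\mathfrak M$, with a morphism into an Artin fan (cone stack) assembled from these toric pieces, in the spirit of~\cite{CCUW}. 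The realization of such a cone stack as a logarithmic algebraic stack then supplies both the representing object and its canonical logarithmic structure.

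The hard part will be this last step: the functor of piecewise linear functions is \emph{a priori} purely combinatorial, and around each loop of $\Gamma$ the existence of a function with prescribed edge slopes forces a relation among the smoothing parameters $\delta_e \in \overnorm M_S$ that need not hold over all of $S$. Consequently $\Div$ is genuinely a logarithmic modification of $\mathfrak M$ rather than a strict cover, and the crux is to verify that the subdivision of the characteristic cones effecting this modification assembles into a well-defined cone stack whose Artin-fan realization represents $\Div$ and is locally of finite presentation. Everything else --- constructibility of $\overnorm M_C^{\rm gp}$, finiteness of the combinatorial types in each component, and the descent along a strict smooth atlas --- is routine.
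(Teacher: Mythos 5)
Your reduction breaks at the point where you declare that "the task becomes to represent the functor of sections of this sheaf on the strict \'etale site of $S$." The fiber product $\Div \mathop\times_{\Mlog_{g,n}} S$ is fibered over \emph{all} logarithmic schemes over $S$, including those whose structure map $T \to S$ is not strict, and the formation of $\pi_\ast(\overnorm M_C^{\rm gp})/\overnorm M_S^{\rm gp}$ commutes only with \emph{strict} base change. Concretely: if $C_s$ has two vertices joined by two edges whose smoothing parameters $\delta_1, \delta_2$ freely generate $\overnorm M_{S,s} = \mathbf N^2$, then the loop relation $k_1\delta_1 = k_2\delta_2$ forces $k_1 = k_2 = 0$, so the only logarithmic divisor over $S$ is the constant one; but after the non-strict base change $\mathbf N^2 \to \mathbf N$, $\delta_i \mapsto 1$, there are countably many. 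Hence the espace \'etal\'e of the sheaf on $S$ (with pulled-back logarithmic structure) has the wrong $T$-points for every non-strict $T \to S$ and cannot represent the fiber product --- the representing object must change the logarithmic structure of $S$. This is exactly why the paper's proof does not work over $\Mlog_{g,n}$ with strict charts but over Olsson's stack $\Log(\Mlog_{g,n})$, over which \emph{every} logarithmic scheme maps strictly: there the espace-\'etal\'e argument is valid and shows $\Log(\Div_{g,\mathbf a})$ is algebraic, \'etale, and locally of finite presentation over $\Log(\Mlog_{g,n})$; the theorem is then finished by exhibiting $\Div_{g,\mathbf a}$ as the \emph{open substack of minimal objects} in $\Log(\Div_{g,\mathbf a})$.

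Your closing paragraph does flag the loop-relation phenomenon, but the mechanism you name to fix it is wrong, and the step is left unproven even though it carries the entire weight of the theorem. The map $\Div_{g,\mathbf a} \to \Mlog_{g,n}$ is \emph{not} a logarithmic modification: it is not proper (over a one-node logarithmic point the fiber contains one point for each integer slope on the edge), and its effect on characteristic monoids is not a subdivision but a \emph{quotient}. Indeed, the paper's construction of minimal objects shows that the minimal characteristic monoid is the image of $\overnorm N_S$ in $\overnorm N_S^{\rm gp}/K$, where $K$ is the subgroup generated by the loop relations $\sum_{e \in \gamma} \delta_e = 0$; dually, the associated cone is a linear slice $\sigma \cap K^\perp$ of the cone $\sigma$ of $\Mlog_{g,n}$, not a cone of a subdivision of $\sigma$. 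So no Artin-fan realization of a subdivision can represent $\Div$, and what your proposal is missing is precisely the paper's second half: the explicit construction of minimal objects and the verification that minimality is stable under base change. (Separately, your finiteness claim for local finite presentation --- "only finitely many bounded graphs and slope vectors occur" per stratum --- is false, since slopes on non-separating edges range over all integers; it is also unnecessary, as local finite presentation follows from \'etaleness of $\Log(\Div_{g,\mathbf a})$ over $\Log(\Mlog_{g,n})$.)
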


\begin{proof}
We employ the standard approach using Gillam's criteria:  first we consider the stack $\Log(\Div_{g,\mathbf a})$ over schemes, whose objects are the same as those of $\Div_{g,\mathbf a}$ but whose arrows are only the cartesian arrows over schemes.  Equivalently, if $S$ is a scheme, then an $S$-point of $\Log(\Div_{g,\mathbf a})$ is the choice of a logarithmic structure $M_S$ on $S$ and an $(S, M_S)$-point of $\Div_{g,\mathbf a}$.  Once we have shown $\Log(\Div_{g,\mathbf a})$ is an algebraic stack, we equip it with the minimal logarithmic structure and identify $\Div_{g,\mathbf a}$ as an open substack.

We begin by showing $\Log(\Div_{g,\mathbf a})$ is algebraic.  Denote by $\Mlog_{g,n}$ the moduli stack of logarithmic curves of genus $g$ with $n$ marked points.  Working relative to $\Log(\Mlog_{g,n})$, we may assume a fixed logarithmic curve $\pi:C\rightarrow S$ has been given. As the formation of $\pi_\ast\overnorm M_C^{\rm gp}$ in the small \'etale topology commutes with base change, so does the formation of $\pi_\ast(\overnorm M_C^{\rm gp})/\overnorm M_S^{\rm gp}$. That is, if $f : T \to S$ is a strict morphism of logarithmic schemes, then $\pi_\ast(\overnorm M_{C_T}^{\rm gp}) / \overnorm M_T^{\rm gp} = f^\ast \bigl( \pi_\ast (\overnorm M_C^{\rm gp}) / \overnorm M_S^{\rm gp} \bigr)$.  Therefore $\Log(\Div_{g,\mathbf a})$ is the pullback to the large \'etale site of the sheaf $\pi_\ast (\overnorm M_C^{\rm gp}) / \overnorm M_S^{\rm gp}$ on the small \'etale site.  It is therefore representable by the espace \'etal\'e \cite[Theorem~V.1.5]{Milne} of $\pi_\ast(\overnorm M_C^{\rm gp})/\overnorm M_S^{\rm gp}$ over $\Log(\Mlog_{g,n})$. This must be locally of finite presentation, since it is \'etale over $\Log(\Mlog_{g,n})$.

Recall that a tuple $(S', C', \alpha')$, where $S'$ is a logarithmic scheme, $C'$ is a logarithmic curve over $S'$, and $\alpha'$ is a logarithmic divisor on $C'$, is called \emph{minimal} if, whenever $(S, C, \alpha)$ is pulled back from $(S', C', \alpha')$ by a morphism $S \to S'$ that is the identity on underlying schemes, and $(S, C, \alpha) \to (S'', C'', \alpha'')$ is a second morphism that is the identity on underlying schemes, there is a unique morphism $(S'', C'', \alpha'') \to (S',C',\alpha')$ completing the triangle below (see \cite{Gillam-Minimality} and \cite[Appendix~B]{minimal} for more details):
\begin{equation} \label{eqn:26} \vcenter{ \xymatrix{
(S,C,\alpha) \ar[dr] \ar[rr] && (S',C',\alpha') \\
& (S'',C'',\alpha'') \ar@{-->}[ur]
} } \end{equation}
By \cite[Proposition~B.1]{minimal}, $\Div_{g,\mathbf a}$ is induced from a logarithmic structure on a moduli problem on logarithmic schemes if and only if every object of $\Div_{g,\mathbf a}$ admits a morphism to a minimal object that is the identity on underlying schemes, and that the pullback of a minimal object under an arbitrary morphism of schemes is still minimal.  In the language of op.\ cit., $G = \Div_{g,\mathbf a}$ is representable by the substack of minimal objects $F \subset \Log(\Div_{g,\mathbf a})$.  Furthermore, \cite[Theorem~B.2]{minimal} implies that, if the minimal objects of $\Div_{g,\mathbf a}$ have coherent logarithmic structures then the substack of minimal objects, $F \subset \Log(\Div_{g,\mathbf a})$, is open.  We may therefore conclude that $\Div_{g,\mathbf a}$ is representable by an algebraic stack with a logarithmic structure once we show that (i) all objects of $\Div_{g,\mathbf a}$ admit morphisms to minimal objects, (ii) minimal objects are preserved by schematic base change, and (iii) minimal logarithmic structures are coherent.

Suppose $\alpha$ is a logarithmic divisor on a logarithmic curve $C$ over a logarithmic scheme $S$, and denote by $N_S$ the \emph{minimal} logarithmic structure on $S$ for the family of curves $C$ (ignoring the logarithmic divisor).  The is the same as the logarithmic structure pulled back from the moduli space of curves; the stalk of $\overnorm N_S$ at a geometric point $s$ is freely generated by the smoothing parameters of the nodes in the fiber $C_s$.  We will show that $\alpha$ is actually defined over the image of $N_S$ in $M_S$.  We work in a small neighborhood of a geometric point $s$ of $S$ so that we may view $\alpha$ as a piecewise linear function on the tropicalization of $C_s$, well-defined up addition of a constant.  Adding a suitable constant to $\alpha$, we can therefore assume that $\alpha$ takes the value $0$ at some vertex of the dual graph of $C_s$.  Since $\alpha$ is linear with integer slopes along the edges, and the lengths of all edges come from the image of $\overnorm N_S$, it follows that all values of $\alpha$ lie in the image of $\overnorm N_S^{\rm gp}$, and therefore that $\alpha$ is defined over the image of $\overnorm N_S$ in $\overnorm M_S$.

%Indeed, at any geometric point of $S$ we may represent $\alpha$ by a section of $\overnorm M_C^{\rm gp}$, well-defined up to addition of an element of $\overnorm N_S^{\rm gp}$: the section $\alpha$ corresponds to elements $\alpha_i\in \overnorm M_S^{\rm gp}$ over the generic point of every irreducible component $C_i\subset C$ and integers at every marked point. If two components $C_i$ and $C_j$ are connected by a node, then $\alpha_i$ and $\alpha_j$ differ by an integer multiple of the smoothing parameter of that node. Pick one component -- call it $o$ -- and replace $\alpha_i$ with $\alpha_i-\alpha_o$ over each component. Then the collection $\{\alpha_i-\alpha_o\}$ (with the same integers at the marked points) correspond to a logarithmic divisor on $C/S$ representing the same point of $\Div$ as $(C/S,\alpha)$. Moreover, the $\alpha_i-\alpha_o$ all lie in the image of $\overnorm N_S^{\rm gp}$ in $\overnorm M_S^{\rm gp}$.

Now, let $K\subset \overnorm N_S^{\rm gp}$ be the smallest subgroup such that $\alpha$ can be represented by an element of $\overnorm N_S^{\rm gp}/K$. The subgroup $K$ is generated pointwise as follows: for every directed edge $e$ from a vertex $v$ to a vertex $w$ in the dual graph of $C$, $\alpha$ determines an element $\delta_e = \alpha(w) - \alpha(v) \in \overnorm N_S^{\rm gp}$; the elements of $K$ are the sums $\sum_{e\in\gamma}\delta_e$ for each directed loop $\gamma$ in the dual graph. We will see that the image of $\overnorm N_S$ in $\overnorm N_S^{\rm gp} / K$ is the minimal monoid.  

If we write $(S',C',\alpha')$ for the object of $\Div_{g,\mathbf a}$ whose underlying scheme is the same as that of $S$, whose characteristic monoid $\overnorm M_{S'}$ is the image of $\overnorm N_S$ in $\overnorm N_S^{\rm gp} / K$, and where $\alpha'$ is the canonical lift of $\alpha$ to $\Div_{g,\mathbf a}(C'/S')$ constructed above, then we have a morphism $(S,C,\alpha) \to (S',C',\alpha')$.  To see that $(C',S',\alpha')$ is minimal, consider a diagram~\eqref{eqn:26}.  Since $N_S$ is minimal for the family of logarithmic curves, we have a map $N_S \to M_{S''}$ and, by the same reasoning as above, $\alpha''$ is defined over the image logarithmic structure.  All of the relations in $K$ must hold in $\overnorm M_{S''}$ for the same reasons they held in $\overnorm M_S$, and therefore the map $\overnorm N_S^{\rm gp} / K \to \overnorm N_{S}^{\rm gp}$ factors through $\overnorm M_{S''}^{\rm gp}$.  This completes the diagram~\eqref{eqn:26} and shows that $(S',C',\alpha')$ is minimal.

This construction is compatible with base change, so defines the minimal structures for our logarithmic moduli problem and therefore an algebraic stack.  Therefore $\Div_{g,\mathbf a}$ can be described by a logarithmic structure on a schematic moduli problem.  Since the construction produces coherent logarithmic structures (we have given explicit finitely generated charts), $\Div_{g,\mathbf a}$ is in fact the logarithmic moduli problem associated to an open substack of $\Log(\Div_{g,\mathbf a})$.  This substack is locally of finite presentation because $\Log(\Div_{g,\mathbf a})$ is.
\end{proof}

\begin{proposition}
The forgetful morphism $\Div_{g,\mathbf a}\rightarrow \Mlog_{g,n}$ to the moduli space of logarithmic curves is logarithmically \'etale.
\end{proposition}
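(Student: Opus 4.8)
The plan is to verify logarithmic étaleness directly through the infinitesimal lifting criterion, exploiting the fact that a logarithmic divisor is a piece of purely characteristic (tropical) data and is therefore insensitive to strict infinitesimal thickenings. By Kato's definition it suffices to show that the underlying morphism is locally of finite presentation---which is already part of Theorem~\ref{thm:divisalgebraic}---and that for every strict square-zero extension $T_0 \hookrightarrow T$ of logarithmic schemes fitting into a commutative square
\[
\xymatrix{
T_0 \ar[r] \ar[d] & \Div_{g,\mathbf a} \ar[d] \\
T \ar[r] \ar@{-->}[ur] & \Mlog_{g,n}
}
\]
there exists a \emph{unique} dashed lift.

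First I would unwind the data. The lower map specifies a logarithmic curve $\pi : C \to T$, and the upper map specifies the restricted curve $C_0 = C|_{T_0}$ together with a logarithmic divisor $\alpha_0$; by the concrete description of $\Div$ proved above, $\alpha_0$ is a section of $\pi_{0\ast}(\overnorm M_{C_0}^{\rm gp})/\overnorm M_{T_0}^{\rm gp}$. Lifting the square then amounts precisely to extending $\alpha_0$ uniquely to a section of $\pi_\ast(\overnorm M_C^{\rm gp})/\overnorm M_T^{\rm gp}$ over $T$.

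The key step---and essentially the only thing that needs checking---is that all of these characteristic sheaves are left untouched by the thickening. Since $T_0 \hookrightarrow T$ is strict, it induces an isomorphism $\overnorm M_{T_0} \cong \overnorm M_T$ (the underlying spaces coincide and strictness identifies the characteristic monoids); the induced inclusion $C_0 \hookrightarrow C$ is again a strict square-zero extension, so likewise $\overnorm M_{C_0}^{\rm gp} \cong \overnorm M_C^{\rm gp}$. Because the formation of $\pi_\ast \overnorm M_C^{\rm gp}$ in the small étale topology commutes with this base change---exactly the fact already invoked in the proof of Theorem~\ref{thm:divisalgebraic}---the two quotient sheaves of sections over $T_0$ and over $T$ are canonically identified. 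Hence $\alpha_0$ determines one and only one extension $\alpha$, which furnishes the dashed arrow and yields both existence and uniqueness, i.e.\ logarithmic étaleness rather than mere smoothness.

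I do not expect a serious obstacle: the entire content is that logarithmic divisors are built from characteristic data, which strict deformations cannot see. The one point requiring care is the bookkeeping around strictness---confirming that $\overnorm M$ genuinely does not change, and that the relevant pushforward is compatible with restriction along $T_0 \hookrightarrow T$---but this is precisely the base-change statement already established. As a consistency check, and in keeping with the approach already taken, the same conclusion follows at once from Olsson's comparison between logarithmic étaleness of a morphism and ordinary étaleness of the associated map of $\Log$ stacks, since the proof of Theorem~\ref{thm:divisalgebraic} exhibited $\Log(\Div_{g,\mathbf a})$ as étale over $\Log(\Mlog_{g,n})$.
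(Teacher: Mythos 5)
Your proposal is correct. The paper's own proof is a one-liner: it simply observes that $\pi_\ast(\overnorm M_C^{\rm gp})/\overnorm M_S^{\rm gp}$ is an \'etale sheaf, so that $\Div_{g,\mathbf a}$ --- already exhibited in Theorem~\ref{thm:divisalgebraic} as the espace \'etal\'e of this sheaf over $\Log(\Mlog_{g,n})$ --- is \'etale over $\Log(\Mlog_{g,n})$, whence logarithmic \'etaleness in the sense of Olsson's formalism. That is exactly the ``consistency check'' you relegate to your final paragraph. Your main argument instead unpacks what this means: you verify Kato's infinitesimal criterion directly, and the key observation --- that a strict square-zero extension $T_0 \hookrightarrow T$ leaves the \'etale sites and the characteristic sheaves $\overnorm M$ untouched (for the base and, by base change, for the curve), so a section of $\pi_\ast(\overnorm M_C^{\rm gp})/\overnorm M_S^{\rm gp}$ over $T_0$ extends uniquely over $T$ --- is precisely the reason the sheaf-theoretic statement implies the geometric one. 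The two routes are the same fact in different packaging: yours is more self-contained and makes the mechanism explicit (logarithmic divisors are characteristic-level data, invisible to strict thickenings), while the paper's leverages the espace \'etal\'e description already established and is correspondingly shorter. One minor point worth making explicit in your write-up is that the lifting criterion is being applied after base change along $T \to \Mlog_{g,n}$, i.e.\ to the representable morphism of fibers, which is what licenses treating lifts as a set rather than a groupoid; your phrasing already does this implicitly by fixing the curve $\pi : C \to T$ at the outset.
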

\begin{proof}
This is immediate from the fact that $\pi_\ast(\overnorm M_C^{\rm gp})/\overnorm M_S^{\rm gp}$ is an \'etale sheaf.
\end{proof}

\setcounter{corollary}{\value{theorem}}
\begin{corollary}\label{cor:divbirational}
$\Div_{g,\mathbf a}$ is logarithmically smooth and birational to $\Mlog_{g,n}$.
\end{corollary}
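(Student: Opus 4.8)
The plan is to deduce both assertions from the two facts just established—that $\Div_{g,\mathbf a}\to\Mlog_{g,n}$ is logarithmically étale, and (via Theorem~\ref{thm:divisalgebraic}) that $\Div_{g,\mathbf a}$ is an algebraic stack with logarithmic structure—together with the log smoothness of the base. First I would record that $\Mlog_{g,n}$ is logarithmically smooth: its underlying stack $\mathfrak M_{g,n}$ of prestable curves is smooth, and its tautological logarithmic structure, whose characteristic monoid at a geometric point is freely generated by the nodes of the curve, makes it logarithmically smooth over the point (see \cite{FKato_curves}). Since the preceding proposition exhibits the forgetful morphism $\Div_{g,\mathbf a}\to\Mlog_{g,n}$ as logarithmically étale, and a logarithmically étale morphism whose target is logarithmically smooth is itself logarithmically smooth, the first half of the corollary follows at once.

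For birationality I would exhibit the dense open substack $\mathcal M_{g,n}\subset\Mlog_{g,n}$ of smooth curves, over which the logarithmic structure is trivial, and argue that the forgetful morphism restricts there to an isomorphism. Indeed, over a smooth curve $C$ the tropicalization is a single vertex carrying $n$ legs and no edges; by Example~\ref{exa:pwl} a logarithmic divisor on $C$ is then a piecewise linear function, taken up to an additive constant, and such a function is uniquely determined by its prescribed outgoing slopes $\mathbf a$ on the legs. Hence the fibre product $\Div_{g,\mathbf a}\times_{\Mlog_{g,n}}\mathcal M_{g,n}$ is canonically identified with $\mathcal M_{g,n}$, so the morphism is an isomorphism over this dense open of the target.

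It remains to check that the preimage of $\mathcal M_{g,n}$ is dense in $\Div_{g,\mathbf a}$, and this is the one point requiring genuine care. Because the morphism is logarithmically étale, the characteristic monoid $\overnorm M$ of $\Div_{g,\mathbf a}$ is trivial at a geometric point precisely when that of $\Mlog_{g,n}$ is (a logarithmically étale morphism induces an isomorphism on characteristic groups up to torsion and finite index, so triviality of the source monoid is equivalent to triviality of the target monoid). Thus the preimage of $\mathcal M_{g,n}$ coincides with the locus of trivial logarithmic structure in $\Div_{g,\mathbf a}$. Having already shown $\Div_{g,\mathbf a}$ logarithmically smooth, I would invoke the standard fact that the trivial-structure locus of a logarithmically smooth stack is open and dense—étale locally the stack is smooth over an affine toric variety, and the trivial locus is the dense torus orbit—imposing saturation if convenient, which is harmless here as noted in Section~\ref{sec:logstr}. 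Density of this preimage then yields the asserted birational identification with $\Mlog_{g,n}$. The main obstacle is exactly this density statement: one must rule out a component of $\Div_{g,\mathbf a}$ lying entirely over the boundary $\Mlog_{g,n}\smallsetminus\mathcal M_{g,n}$, and it is log smoothness of the \emph{source}, rather than mere log étaleness of the morphism, that supplies the decisive input.
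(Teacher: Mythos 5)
Your proposal is correct and follows essentially the same route as the paper: logarithmic smoothness is inherited from $\Mlog_{g,n}$ via the logarithmically \'etale forgetful map, the map is an isomorphism over the locus of trivial logarithmic structure, and density of that locus in a logarithmically smooth stack gives birationality. Your additional details (the explicit identification over $\mathcal M_{g,n}$ via slopes on legs, and the comparison of trivial loci through the log \'etale map) are correct elaborations of steps the paper leaves implicit.
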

\begin{proof}
As $\Div_{g,\mathbf a}$ is logarithmically \'etale over $\Mlog_{g,n}$, and $\Mlog_{g,n}$ is logarithmically smooth (over $\mathbf Z$ with the trivial logarithmic structure), so is $\Div_{g,\mathbf a}$.  It is an isomorphism over the locus where the logarithmic structures are trivial, and in a logarithmically smooth logarithmic scheme, the open subset where the logarithmic structure is trivial is dense.
\end{proof}

\subsection{The Abel--Jacobi section} \label{sec:aj}

\begin{definition} \label{def:pic}
We write $\Pic_{g,n}$ for the stack on logarithmic schemes whose $S$-points are pairs $(C, L)$, where $\pi : C \to S$ is a logarithmic curve over $S$ and $L$ is a section over $S$ of $\pi_\ast(\BGm_C) / \BGm_S = R^1 \pi_\ast \mathcal O_C^\ast$.
\end{definition}

In the absence of logarithmic structures, it is well-known that $\Pic_{g,n}$ is representable by an algebraic stack, provisionally denoted $P$, and that the projection to $\mathfrak M_{g,n}$ is representable by algebraic spaces.  It can be seen easily that the morphism of functors $\Pic_{g,n} \to \mathfrak M_{g,n}$ is strict, and therefore that if $P$ is equipped with the logarithmic structure pulled back from $\mathfrak M_{g,n}$ then $P$ represents $\Pic_{g,n}$.

We construct a canonical map $\aj : \Div_{g,\mathbf a}  \rightarrow \Pic_{g,n}$. Given an object $(\pi:C\rightarrow S,\alpha)$ of $\Div$ over a logarithmic scheme $S$, pushing forward the exact sequence $0\rightarrow \mathcal O^\ast_C\rightarrow M_C^{\rm gp}\rightarrow \overnorm M_C^{\rm gp}\rightarrow 0$ along $\pi$ results in a commutative diagram with exact arrows:
\[ 
\xymatrix{
M^{\rm gp}_S \ar[r] \ar[d] & \overnorm M^{\rm gp}_S \ar[d] \ar[r] & 0  \ar[d] \\
\pi_\ast M^{\rm gp}_C \ar[r] & \pi_\ast \overnorm M^{\rm gp}_C \ar[r] & R^1 \pi_\ast \mathcal O_C^\ast.
} 
\]
\noindent Upon taking quotients in the vertical direction, this yields an exact sequence:
\begin{equation} \label{eqn:exactforsections}
\pi_\ast (M^{\rm gp}_C) / M^{\rm gp}_S \rightarrow \pi_\ast (\overnorm M^{\rm gp}_C) / \overnorm M_S^{\rm gp} \rightarrow R^1 \pi_\ast \mathcal O_C^\ast.
\end{equation}
The second arrow in this exact sequence produces a line bundle class $\aj(\alpha)$ in $\Pic(C)$, inducing our desired morphism.

\begin{theorem} \label{thm:proper}
The morphism $\aj:\Div_{g,\mathbf a} \rightarrow \Pic_{g,n}$ is finite, unramified on the underlying algebraic stacks, and logarithmically a monomorphism.
\end{theorem}

\begin{remark}
The logarithmic structure of $\Div_{g,\mathbf a}$ is not necessarily saturated, so $\aj$ will be finite but not necessarily an embedding if we work with saturated logarithmic structures.
\end{remark}

\begin{proof}
Since both source and target are locally of finite presentation, we may demonstrate that $\aj$ is finite by showing it
\begin{enumerate}%[label=(\roman{*})]
\item is quasicompact,
\item is locally quasifinite, and
\item satisfies the valuative criterion for properness.
\end{enumerate}
Local quasifiniteness follows from being unramified, so the the theorem will follow from the following four lemmas.
\end{proof}

\numberwithin{lemma}{theorem}

\begin{lemma} \label{lem:monomorphism}
The map $\aj : \Div_{g,\mathbf a} \to \Pic_{g,n}$ is a logarithmic monomorphism.
\end{lemma}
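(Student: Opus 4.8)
The plan is to verify that $\aj$ is fully faithful on $T$-points and to arrange the argument so that all of the stacky content collapses to an injectivity of sheaves. First I would work relative to $\Mlog_{g,n}$. Since $\aj$ covers the identity on the underlying prestable marked curve, and the logarithmic structure of a logarithmic curve is the canonical (minimal) one determined by its underlying curve, it suffices to prove that $\aj$ is a monomorphism relative to $\Mlog_{g,n}$. Over a fixed logarithmic curve $\pi : C \to S$ the source is $\Div(C) = \Gamma\bigl(S, \pi_\ast \overline M_C^{\rm gp}/\overline M_S^{\rm gp}\bigr)$, a sheaf of sets with no automorphisms, so the faithfulness and the bijectivity on automorphisms reduce to the statement that the line bundle $\aj(\alpha)$ has no automorphisms beyond those pulled back from $S$. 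This is immediate from $\pi_\ast \mathcal O_C^\ast = \mathcal O_S^\ast$ for the proper connected curve $C$. Hence the whole claim reduces to injectivity of the sheaf map $\alpha \mapsto \aj(\alpha)$, that is, injectivity of $\pi_\ast \overline M_C^{\rm gp}/\overline M_S^{\rm gp} \to R^1\pi_\ast \mathcal O_C^\ast$.

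Second, by the exact sequence~\eqref{eqn:exactforsections} this injectivity is equivalent to the vanishing of the first map there: every local section of $\pi_\ast M_C^{\rm gp}/M_S^{\rm gp}$ must have zero image in $\pi_\ast \overline M_C^{\rm gp}/\overline M_S^{\rm gp}$. Concretely, I must show that any section of $M_C^{\rm gp}$ defined over a whole fiber has locally constant image in $\overline M_C^{\rm gp}$, i.e.\ its associated piecewise linear function on the tropicalization is constant. This is the assertion I would prove by a local computation, and it is the heart of the matter.

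The computation is \'etale-local on $S$ and may be checked on geometric logarithmic points. Let $f$ be a section of $M_C^{\rm gp}$ over a proper fiber, let $\varphi$ be its piecewise linear tropicalization, and let $s_e$ be the slope of $\varphi$ along an edge $e$. Near the corresponding node the local model is $\overline M_{C} \simeq \overline M_S + \mathbf N\alpha + \mathbf N\beta/(\alpha+\beta=\delta_e)$, and $f$ is a unit times a monomial in the two branch parameters. Matching the two branch expressions of the single global section $f$ across the node forces the increment $s_e \delta_e$ to be the class of a unit, hence $s_e\delta_e = 0$ in $\overline M_S^{\rm gp}$. Because the logarithmic structures are integral and $\delta_e \neq 0$ is a genuine smoothing parameter, this gives $s_e = 0$. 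This is precisely the gluing mechanism of Section~\ref{sec:indeterminacy}, read in reverse.

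Finally, with every edge slope of $\varphi$ equal to zero, $\varphi$ takes a single value at each vertex of the connected dual graph and is therefore constant, so its class in $\pi_\ast\overline M_C^{\rm gp}/\overline M_S^{\rm gp}$ vanishes. This establishes the vanishing of the first arrow in~\eqref{eqn:exactforsections}, hence the injectivity of $\aj$, and together with the reductions above shows that $\aj$ is a monomorphism. The main obstacle is the node computation of the third step: one must confirm that the smoothing relation $\alpha+\beta=\delta_e$ genuinely obstructs a nonconstant \emph{liftable} tropical function. Integrality of the characteristic monoid is exactly what is needed to pass from $s_e\delta_e=0$ to $s_e=0$; without it a nonconstant piecewise linear function with trivial associated line bundle could persist and $\aj$ would fail to be injective.
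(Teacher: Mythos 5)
There is a genuine gap, and it occurs at the very point you call the heart of the matter. Your step 2 reduces the lemma to the claim that the first map in \eqref{eqn:exactforsections} vanishes, i.e.\ that \emph{every} global section of $M_C^{\rm gp}$ has constant tropicalization. That claim is false, and the lemma does not reduce to it: the kernel of the sheaf map $\pi_\ast \overnorm M_C^{\rm gp}/\overnorm M_S^{\rm gp} \to R^1\pi_\ast\mathcal O_C^\ast$ is exactly the set of piecewise linear functions whose associated line bundle is trivial, and this kernel is nontrivial in general. For example, on $C = \mathbf P^1$ with markings $0,\infty$ over a trivial logarithmic point, the coordinate $z$ is a global section of $M_C^{\rm gp}$ whose tropicalization has leg slopes $(1,-1)$ and is therefore a nonzero element of $\pi_\ast \overnorm M_C^{\rm gp}/\overnorm M_S^{\rm gp}$. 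One can likewise produce kernel elements with nonzero \emph{edge} slopes (two $\mathbf P^1$'s glued at a node, each with one marking, over the standard logarithmic point). What saves the lemma is precisely the hypothesis your argument never uses: both points of $\Div_{g,\mathbf a}$ have the \emph{same} leg slopes $\mathbf a$, so their difference has leg slopes zero, and one only needs that kernel elements with \emph{zero leg slopes} are constant. Without fixing $\mathbf a$, $\aj$ is not a monomorphism, so any proof that ignores the leg slopes must be wrong.

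The local node computation in step 3 is where this error materializes. The characteristic monoid at a node, $\overnorm M_{S} + \mathbf N\alpha + \mathbf N\beta/(\alpha+\beta=\delta_e)$, is engineered precisely so that sections of $M_C^{\rm gp}$ can cross the node with nonzero slope: the branch coordinate $x$ itself is a local section of $M_C$ with $\bar x = \alpha$, whose piecewise linear function has slope $1$ along the edge, and nothing forces the increment $s_e\delta_e$ to be the class of a unit. There is no local obstruction at a node at all; constancy of kernel elements is an irreducibly \emph{global} statement. The paper's proof makes this global structure explicit: since $\aj(\alpha-\beta)$ is pulled back from the base it has multidegree zero, so the sum of outgoing slopes of $\alpha-\beta$ vanishes at every vertex (harmonicity); since $\alpha$ and $\beta$ share the leg slopes $\mathbf a$, the difference is bounded; and a bounded harmonic function on a connected graph is constant by the maximum principle. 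A correct repair of your argument would have to replace the node-by-node computation with some such global degree argument, and would have to restrict from the start to sections whose leg slopes cancel.
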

\begin{proof}
We learned the following argument from Lionel Levine.  This is a local assertion, so we may assume that we have a logarithmic curve $C$ over $S$ and a geometric point $s$ of $S$ such that an element of $\Div_{g,\mathbf a}(C)$ corresponds to a piecewise linear function on the tropicalization of $C_s$, up to the addition of a constant.  Suppose we have two such piecewise linear functions, $\alpha$ and $\beta$, such that $\aj(\alpha) = \aj(\beta)$.  Then $\aj(\alpha - \beta)$ is pulled back from the base.  We argue that this implies $\alpha - \beta$ is a constant function.  

Indeed, because $\aj(\alpha - \beta)$ is pulled back from the base, it has degree zero on every component of $C$.  The degree of $\aj(\alpha - \beta)$ on a component is the sum of the outgoing slopes of $\alpha - \beta$ from the corresponding vertex.  Therefore the sum of the outgoing slopes of $\alpha - \beta$ is zero at every vertex, so $\alpha - \beta$ is a harmonic function.  On the other hand, it is bounded, because $\alpha$ and $\beta$ have the same outgoing slopes $\mathbf a$  on the legs of the dual graph.  Therefore $\alpha - \beta$ is a bounded, harmonic function on the tropicalization, hence is constant.
\end{proof}

\begin{lemma} \label{lem:quasifinite}
The map $\aj : \Div_{g,\mathbf a} \to \Pic_{g,n}$ is unramified on the underlying algebraic stacks (ignoring logarithmic structures).
\end{lemma}

\begin{lemma}
Let $S$ be the spectrum of an algebraically closed field, let $C$ be a logarithmic curve over $S$, and let $L$ be an invertible sheaf over $C$.  The fiber of $\Div_{g,\mathbf a} \to \Pic_{g,n}$ over $(S,C,L)$ is a discrete set of points.  It is in bijection with the set of assignments of a slope $\lambda_q \in \mathbf Z$ to each edge $e$ of the dual graph of $C$ satisfying the following condition:
  let $D$ be the normalization of all nodes of $C$ where $\lambda_q \neq 0$; then $L \big|_D \simeq \mathcal O_D(\sum \lambda_q q)$, with the sum taken over all edges $q$ of the dual graph of $D$ created by normalization, identified with special points of $D$ and oriented outwards.  In particular, $\aj : \Div_{g,\mathbf a} \to \Pic_{g,n}$ is unramified.
\end{lemma}

\begin{proof}
We thank Sebastian Bozlee for his help with this argument, and David Holmes for alerting us to a missing condition in the statement.

By definition, a $T$-point of the fiber of $\aj$ over $(S,C,L)$ is a morphism of schemes $f : T \to S$, a logarithmic structure $M_T$ on $T$, making $C_T$ into a logarithmic curve over $T$, and a piecewise linear function $\alpha$ on the dual graph of $C$ taking values in $\overnorm M_T^{\rm gp}$, and taken up to the addition of constants, such that $\mathcal O_{C_T}(\alpha) \simeq L \big|_{C_T}$.  

%This assertion depends only on the geometric fibers of $\aj$, so we fix a nodal curve $C$ over a base $S$, whose underlying scheme is the spectrum of an algebraically closed field, equip them with their minimal logarithmic structures, $M_C$ and $M_S$, and fix a line bundle $L$ on $C$.  To give a $T$-point of the fiber of $\aj$ over this point of $\Pic_{g,n}$ amounts to giving a morphism $f : T \to S$, a minimal quotient logarithmic structure $f^\ast M_S \to M_T$, a piecewise linear function on the dual graph of $C$ (up to addition of constants) taking values in $\overnorm M_T$, such that $\mathcal O_{C_T}(\alpha)$ is isomorphic to $L$.

The slopes $\lambda_q \in \mathbf Z$ of $\alpha$ along the edges of the dual graph of $C$ are locally constant on the fiber.  Isolating one connected component, we may treat them as fixed.  We note that these slopes determine the characteristic monoid $\overnorm M_T$ of the logarithmic structure $M_T$, but not the logarithmic structure itself.

We will refer to a union of components $D$, as in the statement of the lemma, as a \emph{level component} of $C$.  Certainly the component of the fiber of $\aj$ with slopes $\lambda_q$ will be empty if there is a level component $D$ of $C$ such that $L \big|_D$ is not isomorphic to $\mathcal O_D(\sum \lambda_q x_q)$, where the sum is taken over edges exiting the vertex of the dual graph corresponding to $D$.  Therefore we may also assume that $\mathcal O_D(\sum \lambda_q x_q) \simeq L \big|_D$ for every level component $D$.

We will now argue that, with $\lambda_q \in \mathbf Z$ fixed as above such that $\mathcal O_D(\sum \lambda_q q)$ for each level component $D$ of $C$, there is a unique choice of minimal logarithmic structure $M_T$ with characteristic monoid $\overnorm M_T$ on $C_T$ such that $\mathcal O_{C_T}(\alpha) \simeq L$.  It will then follow that the component of $\aj^{-1}(S,C,L)$ with these fixed slopes $\lambda_q$ is a single reduced point.

Let $\Gamma$ be the quotient of the dual graph of $C$ in which the vertices correspond to the level components of $C$ and let $\tilde\Gamma$ be its universal cover.  Let $\rho : \tilde C \to C$ be the \'etale cover of $C$ corresponding to the universal cover $\tilde\Gamma \to \Gamma$.  The $\lambda_q$, when pulled back to $\tilde\Gamma$, determine a piecewise linear function $\tilde\alpha$ on $\tilde\Gamma$ taking values in $\overnorm M_S^{\rm gp}$ and having integral slopes along the edges.  This $\tilde\alpha$ is unique up to the addition of a constant from $\overnorm M_S^{\rm gp}$.  

The isomorphisms $L \big|_{D} \simeq \mathcal O_{D}(\sum_{q \in D} \lambda_q q)$ imply that $\rho^\ast L \simeq \mathcal O_{\tilde C}(\tilde \alpha)$.  Since $\tilde C$ is a connected and satisfies the valuative criterion for properness, this isomorphism is unique up to a global scalar.  Since $\rho^\ast L$ descends to $C$, we have identifications $\gamma^\ast \rho^\ast L = \rho^\ast L$ for all $\gamma \in \pi_1(\Gamma)$.  On the other hand, $\gamma^\ast \mathcal O_{\tilde C}(\tilde \alpha) = \mathcal O_{\tilde C}(\gamma^{-1} \tilde\alpha)$ where $\gamma^{-1} \tilde\alpha$ is the piecewise linear function given by the following formula for each level component $\tilde D$ of $\tilde C$:
\begin{equation*}
\gamma^{-1} \tilde\alpha(\tilde D) = \tilde\alpha(\gamma . \tilde D) = \tilde\alpha(\tilde D) + \sum_{q \in \gamma} \lambda_q \delta_q
\end{equation*}
The sum is taken over the oriented edges of $\gamma$ and $\delta_q \in \overnorm M_S$ is the length of the edge $q$.  Thus the isomorphisms $\gamma^\ast \mathcal O_{\tilde C}(\tilde \alpha) \simeq \mathcal O_{\tilde C}(\tilde\alpha)$ are specified uniquely by trivializations of the invertible sheaves
\begin{equation*}
\mathcal O_S(\sum_{q \in \gamma} \lambda_q \delta_q) .
\end{equation*}
If $\gamma$ represents a nontrivial element of $H_1(\Gamma)$ then the $\lambda_q$ are all nonzero for $q \in \gamma$, by the construction of $\Gamma$.  It follows (for example, by the choice of a spanning tree) that the homomorphism 
\begin{equation*}
H_1(\Gamma) \to \overnorm M_S^{\rm gp} : \gamma \mapsto \sum_{q \in \gamma} \lambda_q \delta_q
\end{equation*}
is injective.  Therefore these trivializations are precisely the data necessary to build a quotient logarithmic structure of $M_S$ in which $\sum_{q \in \gamma} \lambda_q \delta_q = 0$ for all $\gamma \in \pi_1(\Gamma)$.  The equations $\sum_{q \in \gamma} \lambda_q \delta_q = 0$ are exactly those required to produce a minimal logarithmic structure on $S$ such that $\tilde\alpha$ descends to a piecewise linear function on $\Gamma$ valued in its characteristic monoid.
\end{proof}

\begin{lemma} \label{lem:valprop}
The map $\aj : \Div_{g,\mathbf a} \to \Pic_{g,n}$ satisfies the valuative criterion for properness.
\end{lemma}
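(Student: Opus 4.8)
\emph{Proof proposal.} The plan is to deduce the criterion for the underlying morphism $\undernorm{\aj}$ from the logarithmic valuative criterion of Section~\ref{sec:valuative}. By that theorem it is enough to solve the logarithmic lifting problem in which $S$ is the spectrum of a valuation ring carrying the maximal logarithmic structure $M_S$ extending a logarithmic structure $M_\eta$ at its generic point $\eta$. Unwinding the resulting square over $\Mlog_{g,n}$, such a problem furnishes a logarithmic curve $\pi : C \to S$, a line bundle $L$ on $C$ coming from the map $S \to \Pic_{g,n}$, and a logarithmic divisor $\alpha_\eta$ on $C_\eta$ --- that is, a piecewise linear function on the tropicalization $\Gamma_\eta$ of the generic fibre, valued in $\overnorm M_\eta^{\rm gp}$ and with leg slopes $\mathbf a$ --- satisfying $\aj(\alpha_\eta) = L\big|_{C_\eta}$. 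Since uniqueness of the lift is already supplied by the monomorphism property of Lemma~\ref{lem:monomorphism}, it suffices to extend $\alpha_\eta$ to a logarithmic divisor $\alpha$ on $C$ with $\aj(\alpha) = L$.

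First I would compare the two tropicalizations. The generization map $\overnorm M_S \to j_\ast \overnorm M_\eta$ sends a smoothing parameter to $0$ exactly when the corresponding node is smoothed over $\eta$, so $\Gamma_\eta$ is obtained from the tropicalization $\Gamma$ of the special fibre by contracting the set $E_0$ of edges whose lengths lie in the kernel of this map; over each vertex of $\Gamma_\eta$ lies a connected subgraph of $\Gamma$ all of whose edges lie in $E_0$. By Proposition~\ref{prop:rel-val} this kernel is the value group of the valuation, and in particular is \emph{totally ordered}. As generization is surjective, I can lift the vertex values of $\alpha_\eta$ into $\overnorm M_S^{\rm gp}$; what remains is to prescribe, for each edge of $E_0$, an integer slope, and to check that the resulting function is single valued and realizes $L$.

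The heart of the matter, and the step I expect to be the real obstacle, is pinning these slopes down. Following the compact-type computation of Theorem~\ref{thm:ct-proper}, for any choice extending $\alpha_\eta$ the class $\aj(\alpha) \otimes L^\vee$ is trivial on $C_\eta$ and so is represented by a Cartier divisor supported on the special fibre; the slopes along $E_0$ are exactly the data needed to annihilate this divisor, and the requirement that $\alpha$ be single valued around each cycle of $\Gamma$ imposes relations $\sum_e m_e \ell_e = 0$ among the integer slopes $m_e$ and the edge lengths $\ell_e \in \overnorm M_S$. The content of Proposition~\ref{prop:rel-val} is precisely that such a system can be solved over a valuation ring: because the relevant lengths are totally ordered, every comparison of smoothing parameters that decides how $\alpha$ bends across a contracted subgraph can be resolved. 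This is the tropical shadow of the blow-up anticipated in Section~\ref{sec:indeterminacy} --- on a valuation ring one of the two ratios of smoothing parameters at a pair of nodes necessarily lies in the ring, and this is what selects the limiting piecewise linear function. Once $\alpha$ is produced, its leg slopes are inherited from $\alpha_\eta$, so the lift lands in $\Div_{g,\mathbf a}$ and satisfies $\aj(\alpha) = L$, completing the existence part of the criterion.
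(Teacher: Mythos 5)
Your skeleton matches the paper's: the same reduction to the logarithmic lifting problem over a valuation ring with maximal logarithmic structure, the same use of Lemma~\ref{lem:monomorphism} for uniqueness, and the same idea of extending $\alpha_\eta$ and then correcting by a Cartier divisor supported on the special fiber. But the decisive step rests on a claim that is both a misreading of Proposition~\ref{prop:rel-val} and false as mathematics. That proposition says only that the generization map $\overnorm M_S \to j_\ast \overnorm M_\eta$ is relatively valuative; in the paper it is used for the properness of divided tropical lines (Proposition~\ref{prop:chain}), not in this lemma, and it does not say that a system $\sum_e m_e \ell_e = 0$ with prescribed bending admits integer solutions. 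Total orderedness of the kernel genuinely does not give this. Concretely, take a valuation ring whose value group is $\mathbf Z^2$ ordered lexicographically, and a logarithmic curve whose special fiber has two components joined at two nodes with smoothing parameters $\ell_q = (1,0)$ and $\ell_r = (0,1)$, both smoothed generically. Single-valuedness forces $m_q \ell_q = m_r \ell_r$, hence $m_q = m_r = 0$: no piecewise linear function with nonzero bending at the two vertices exists, even though the value group is totally ordered. So ``because the relevant lengths are totally ordered, every comparison \dots can be resolved'' is not an argument, and nothing in your proposal replaces it.

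What closes the argument in the paper is cohomological and sidesteps your system entirely. First, the obstruction to lifting $\alpha_\eta$ to a global section $\alpha$ of $\overnorm M_C^{\rm gp}$ lies in $H_1(\Gamma, \cartier)$, where $\cartier = j_\ast(\mathcal O_{C_\eta}^\ast)/\mathcal O_C^\ast$ is the sheaf of Cartier divisors supported on the special fiber (equivalently, the kernel of $\overnorm M_C^{\rm gp} \to j_\ast \overnorm M_{C_\eta}^{\rm gp}$) and $\Gamma$ is the graph whose vertices are the \emph{unions of components of $C_0$ that are Cartier in the total space}; this group vanishes because $\cartier$ is freely generated by those vertices. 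Your appeal to surjectivity of generization only lifts values vertex by vertex and does not touch this consistency obstruction, which is exactly what needs proof. Second, once some lift $\alpha$ exists, integrality of $C$ identifies the Picard group of $C$ with its Cartier class group, so the discrepancy between $L$ and $\aj(\alpha)$ is $\mathcal O_C(D)$ for some $D \in \Gamma(C_0, \cartier)$; and since $\cartier$ is by construction a subsheaf of $\overnorm M_C^{\rm gp}$ killed by restriction to $C_\eta$, the corrected section $\alpha + D$ is \emph{automatically} a well-defined global section lifting $\alpha_\eta$ --- there is no residual single-valuedness system to solve. This also explains why the incommensurable example above does not contradict the lemma: a line bundle whose multidegree cannot be achieved by an element of $\Gamma(C_0,\cartier)$ can never be trivial on the generic fiber, so it never arises in the lifting problem. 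Your proof needs this mechanism (or an equivalent one) in place of the appeal to Proposition~\ref{prop:rel-val}.
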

\begin{proof}
Suppose that $C \rightarrow S$ is a family of logarithmic curves, where $S$ is the spectrum of a valuation ring, $j : \eta \to S$ is the inclusion of its generic point, and the logarithmic structure of $S$ is the maximal extension of the logarithmic structure on $\eta$ (as in Definition~\ref{def:maximal}).  That is, $M_S = j_\ast M_\eta \mathop\times_{j_\ast \mathcal O_\eta} \mathcal O_S$.  We consider a lifting problem:
\begin{equation*} \vcenter{\xymatrix{
    \eta \ar[r] \ar[d] & \Div_{g,\mathbf a} \ar[d] \\
    S \ar@{-->}[ur] \ar[r] & \Pic_{g,n}.
}} \end{equation*}
Making a ramified base change, we can assume that $\overnorm M_\eta$ is a constant sheaf on $\eta$.

Let $C_\eta$ be the restriction of $C$ to $\eta$, and let $C_0$ be the special fiber.  The diagram gives a line bundle $L$ on $C$ and a section $\alpha_\eta \in \Gamma(C_\eta, \overnorm M_{C_\eta})$ such that $\aj(\alpha_\eta) = L |_\eta$.  

There is a commutative diagram with exact rows:
\begin{equation} \label{eqn:6} \vcenter{\xymatrix{
0 \ar[r] & M_C^{\rm gp} \ar[r] \ar[d] & \overnorm M_C^{\rm gp} \ar[r] \ar[d] & \BGm_C \ar[d] \\
0 \ar[r] & j_\ast M_{C_\eta}^{\rm gp} \ar[r] & j_\ast \overnorm M_{C_\eta}^{\rm gp} \ar[r] & j_\ast \BGm_{C_\eta}
}} \end{equation}

By Proposition~\ref{prop:ss}, the map $M_C^{\rm gp} \to j_\ast M_{C_\eta}^{\rm gp}$ is an isomorphism, so the square on the right is cartesian.  The line bundle $L$ is a section of $\BGm_C$, the section $\alpha_\eta$ lies in $j_\ast \overnorm M_{C_\eta}^{\rm gp}$, and these have the same image in $j_\ast \BGm_{C_\eta}$, by assumption.  Therefore there is a unique section $\alpha$ of $\overnorm M_C^{\rm gp}$ inducing both $\alpha_\eta$ and $L$.

This does not yet complete the proof, because $\alpha_\eta$ was only well-defined up to addition of a section of $\overnorm M_\eta^{\rm gp}$.  Likewise, $L$ was only well-defined up to tensor product by a section of $j_\ast \BGm_\eta$.  We must show that $\alpha$ does not depend on these choices.  On the other hand, we are only interested in $\alpha$ up to addition of a section of $\overnorm M_S^{\rm gp}$.  We once again have a commutative diagram with exact rows~\eqref{eqn:7}:
\begin{equation} \label{eqn:7} \vcenter{\xymatrix{
0 \ar[r] & M_S^{\rm gp} \ar[r] \ar[d] & \overnorm M_S^{\rm gp} \ar[r] \ar[d] & \BGm_S \ar[d] \\
0 \ar[r] & j_\ast M_{\eta}^{\rm gp} \ar[r] & j_\ast \overnorm M_\eta^{\rm gp} \ar[r] & j_\ast \BGm_\eta
}} \end{equation}
The map $M_S^{\rm gp} \to j_\ast M_\eta^{\rm gp}$ is again an isomorphism (by definition this time, not by Proposition~\ref{prop:ss}) so the square on the right is once again cartesian.

This shows that the ambiguity in the $\alpha$ we have produced is precisely the common ambiguity in the original choices of $L$ and $\alpha_\eta$, and completes the proof.

\end{proof}

\begin{lemma}
The map $\aj : \Div_{g,\mathbf a} \to \Pic_{g,n}$ is quasicompact.
\end{lemma}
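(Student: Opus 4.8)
The plan is to reduce quasicompactness of $\aj$ to a finiteness statement about the combinatorial types of piecewise linear functions that can occur, and to control those types by combining the bound on multidegree available over a quasicompact part of $\Pic_{g,n}$ with the rigidity already exploited in Lemma~\ref{lem:monomorphism}.

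First I would reduce to a local statement. Since both source and target are locally of finite presentation, it suffices to show that the preimage of a quasicompact open substack $U \subseteq \Pic_{g,n}$ is quasicompact. Any such $U$ lies over a quasicompact open of the stack of prestable curves, so the curves it parametrizes have a bounded number of components and nodes; in particular only finitely many dual graphs occur among its geometric fibers. Moreover, the universal line bundle on $U$ has locally constant, hence bounded, multidegree, say $|\deg_v L| \le M$ for every component $v$ of every fiber.

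Next I would use Example~\ref{exa:pwl} to describe a point of $\aj^{-1}(U)$ over a geometric point as a piecewise linear function $\alpha$ on the tropicalization of the curve, linear with integer slopes along the edges, with outgoing slopes $\mathbf a$ on the legs, and taken up to an additive constant. The crucial observation is that the slopes of $\alpha$ are pinned down by data of bounded size. As recorded in the proof of Lemma~\ref{lem:monomorphism}, the degree of $\aj(\alpha)$ on a component equals the sum of the outgoing slopes of $\alpha$ at the corresponding vertex, so this slope-sum at each vertex is determined by the bounded multidegree of the line bundle classified by $U$. If $\alpha$ and $\alpha'$ induce the same such divisor and share the leg slopes $\mathbf a$, then $\alpha - \alpha'$ is a bounded harmonic function on the tropicalization, hence constant by the argument of Lemma~\ref{lem:monomorphism}; thus $\alpha$ and $\alpha'$ have identical slopes along every edge. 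Consequently, for each of the finitely many dual graphs and each of the finitely many admissible vertex-divisors bounded by $M$, there is at most one admissible slope assignment, so only finitely many combinatorial types of $\alpha$ arise over $U$.

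Finally, I would note that each combinatorial type cuts out a substack of $\Div_{g,\mathbf a}$ that is of finite type, indeed logarithmically \'etale by the representability established in Theorem~\ref{thm:divisalgebraic}, over the corresponding finite-type boundary stratum of the stack of prestable curves. Since $\aj^{-1}(U)$ is covered by finitely many such quasicompact substacks, it is quasicompact. The main obstacle is the middle step: a priori the internal slopes of a piecewise linear function are unconstrained, because one may always add a harmonic function, and it is precisely the fact that a bounded harmonic function on a tropical curve is constant that converts the bound on the multidegree into a bound on the slopes, and hence into finiteness of the combinatorial types.
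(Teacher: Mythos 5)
There is a genuine gap in your middle step, and it is the heart of the lemma. You claim that for a fixed dual graph and fixed vertex-divisor there is \emph{at most one} admissible slope assignment, deducing this by applying the harmonic-rigidity argument of Lemma~\ref{lem:monomorphism} to $\alpha - \alpha'$. But that argument requires $\alpha$ and $\alpha'$ to be sections of $\overnorm M_C^{\rm gp}$ for one and the same logarithmic curve $C$ over one and the same logarithmic base; only then is $\alpha - \alpha'$ a piecewise linear function on a single tropicalization. In the quasicompactness problem the logarithmic structure is part of the data being quantified over: two points of $\aj^{-1}(U)$ lying over the same geometric point of the underlying stack of $\Pic_{g,n}$ will in general carry \emph{different} minimal logarithmic structures, imposing different relations among the edge lengths, and their difference is not a piecewise linear function on any one tropical curve. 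The uniqueness claim is in fact false. Take the curve of Section~\ref{sec:indeterminacy}: two components joined at two nodes with smoothing parameters $\delta_1, \delta_2$, and a line bundle of multidegree $(d,-d)$ with $d > 0$. For every pair of positive integers $(m_1, m_2)$ with $m_1 + m_2 = d$, the slope assignment $(m_1, m_2)$ along the two edges is realized by a point of $\Div_{g,\mathbf a}$ over a logarithmic structure in which $m_1 \delta_1 = m_2 \delta_2$; all $d-1$ of these assignments have the same vertex-divisor and the same (empty) leg data, and all of them map into a single quasicompact substack of $\Pic_{g,n}$. (They map to \emph{different} line bundles, which is why this does not contradict Lemma~\ref{lem:monomorphism}; but quasicompactness requires controlling the whole preimage of a quasicompact family, not a single fiber.) So your intermediate conclusion --- finitely many combinatorial types --- is true, but the argument you give for it proves too much and is invalid; with it removed, you have no bound on the slopes at all.

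The paper's proof replaces this step with a combinatorial induction, following \cite{FP}, that yields finiteness rather than uniqueness: the direction of increase of $\alpha$ induces an acyclic orientation of the dual graph; there are finitely many orientations, so fix one; at a vertex with only outgoing edges, the outgoing slopes are positive and sum to the (bounded) degree of $L$ there, leaving finitely many choices; delete that vertex and induct. Two further steps are then still needed, which your final paragraph elides: having fixed the slopes, one must bound the possible logarithmic structures (they are quotients of the pulled-back one, with characteristic monoid determined by the slopes), and one must show that the locus where $\mathcal O_C(\alpha)$ agrees with $L$ up to a twist from the base is quasicompact --- the paper gets closedness of this locus from the Seesaw theorem. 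Your appeal to logarithmic \'etaleness of $\Div_{g,\mathbf a}$ over the stack of curves cannot substitute for this: $\Div_{g,\mathbf a} \to \Mlog_{g,n}$ is logarithmically \'etale but is \emph{not} quasicompact (over the curve above, with $\delta_1 = \delta_2$, its fiber contains the points with slopes $(m,m)$ for every integer $m$), so logarithmic \'etaleness over a finite-type stratum gives no quasicompactness; it is precisely the bound on the multidegree coming from $\Pic_{g,n}$, fed through the orientation argument, that cuts this down to finitely many pieces.
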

\begin{proof}
We must show that, given a quasicompact base $S$, a family of logarithmic curves $C$ over $S$, and a line bundle $L$ on $C$, the fiber product $S \mathop\times_{\Pic_{g,n}} \Div_{g,\mathbf a}$ is quasicompact.  We may freely pass to a constructible and \'etale covers of $S$ and thereby assume that the logarithmic structure of $S$ constant.  We can also assume that $C$ has constant dual graph $G$.  The multidegree of $L$ is then fixed.  %We give $S$ the minimal logarithmic structure $M_S$ associated to the family $C$; in other words, $M_S$ is the logarithmic structure pulled back from $\Pic_{g,n}$.

\vskip1ex

\textsc{Bounding the slopes.}
Following \cite{FarkasP}, we argue that there are at most finitely many possibilities for the slopes of a piecewise linear function $\alpha$ on the tropicalization of $C$ such that the multidegree of $\mathcal O_C(\alpha)$ agrees with the multidegree of $L$. 

The degrees of $L$ on each of the irreducible components of $C$ are a tropical divisor --- an integer valued function on the vertices of $G$.  If $\alpha$ is a piecewise linear function on the tropicalization of $C$ then the degree of $\aj(\alpha)$ on a component of $C$ is the deviation from linearity (i.e., the sum of the outgoing slopes) of $\alpha$ at the corresponding vertex of the tropicalization.  In effect, we wish to show that there are finitely many piecewise linear functions $\alpha$ with the same associated tropical divisor.

Any choice of $\alpha$ endows the tropicalization of $C$ with an orientation of its edges (the direction of increase of $\alpha$, treating vertices as equivalent when the slope along an edge connecting them is zero), and since there are finitely many ways to orient a finite graph, we may assume one orientation has been fixed.  This graph can have no directed loops, for if we had a sequence of vertices $v_0, v_1, \ldots, v_n = v_0$ in a directed loop then we would have $\alpha(v_i) - \alpha(v_{i-1}) \in \overnorm M_S$ for all $i = 1, \ldots, n$.  But then $\sum_{i = 1}^n (\alpha(v_i) - \alpha(v_{i-1}) = 0$ implies that all $\alpha(v_i)$ are equal since $\overnorm M_S$ is a sharp monoid.

Now we proceed by induction:  since the graph has no directed loops, there is at least one vertex with only outwardly directed edges.  The degree of $L$ on this component constrains the possible outward slopes of $\alpha$ on the adjacent edges to a finite collection of possibilities.  We therefore assume one is fixed.  Now delete that vertex and the adjacent edges, adjusting the degree of $L$ on the adjacent components to account for the edges that have been removed.  By induction, there are only finitely many possibilities for the slopes.

\vskip1ex

\textsc{Bounding the logarithmic structure.}
We may now assume that the slopes of $\alpha$ on the edges of $G$ are fixed.  We must still show that the space of lifts of $S \to \Pic_{g,n}$ to $\Div_{g,\mathbf a}$ with these slopes is quasicompact.  Any such map will give a quotient $M'_S$ of the logarithmic structure of $M_S$, and the characteristic monoid $\overnorm M'_S$ of this quotient is determined by the fixed slopes of $\alpha$.  Once its characteristic monoid has been determined, the choices of this logarithmic structure vary in a quasicompact family, so we may assume it has been fixed.

\vskip1ex

\textsc{Conclusion.}
Now we have fixed slopes along the edges of $G$ and a logarithmic structure such that these slopes arise from a piecewise linear function.  This gives a uniquely determined logarithmic divisor $\alpha$ on $C$, and we wish to show that the inclusion in $S$ of the locus where $\mathcal O_C(\alpha)$ agrees with $L$ (up to tensor product by a line bundle from the base) is quasicompact.  In fact, it is closed, by the Seesaw Theorem~\cite[Section~5, Corollary~6, p.~54]{AV}.
\end{proof}

The lemmas together complete the proof of Theorem~\ref{thm:proper}. \qed

\vskip1em

We introduce the notation $\Div_{g,\mathbf a}^0$ for the open substack of $\Div_{g,\mathbf a}$ corresponding to $(C, \alpha)$ such that, when $\alpha$ is regarded as an equivalence class of piecewise linear functions on the tropicalization of $C$, it is actually linear.  That is, the outgoing slopes of $\alpha$ at each vertex sum to zero.
We also introduce $\Div_{g,\mathbf a}^{\can}$ for the open substack of $\Div_{g,\mathbf a}$ where the sum of the outgoing slopes at a vertex $v$ of the tropicalization of a curve is $2h - 2 + m$, where $h$ is the valence of $v$ and $m$ is the number of half-edges incident to $v$.

\begin{proposition}
The loci of $(C, \alpha)$ in $\Div_{g,\mathbf a}^0$ and in $\Div_{g,\mathbf a}^{\can}$ such that $C$ is of compact type are equivalent to $\ctDiv_{g,\mathbf a}^0$ and to $\ctDiv_{g,\mathbf a}^{\can}$, respectively.
\end{proposition}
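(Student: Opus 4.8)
The plan is to show that the functor forgetting the logarithmic divisor, $(C,\mathcal P,\alpha)\mapsto(C,\mathbf x)$, restricts to an equivalence from the compact-type locus of $\Div_{g,\mathbf a}^0$ (resp. $\Div_{g,\mathbf a}^{\can}$) onto ${}^{\mathrm{ct}}\mathfrak M_{g,n}\cong\ctDiv_{g,\mathbf a}^0$ (resp. $\ctDiv_{g,\mathbf a}^{\can}$), the right-hand identification being the one recorded in Section~\ref{sec:ct}. This forgetful functor is the restriction of the logarithmically \'etale morphism $\Div_{g,\mathbf a}\to\Mlog_{g,n}$, so it suffices to produce a two-sided inverse: for every compact-type logarithmic curve I must exhibit a \emph{unique} logarithmic divisor $\alpha$ satisfying the degree-zero (resp. canonical) condition. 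By Example~\ref{exa:pwl} this is a purely tropical question, so the statement reduces to a combinatorial assertion about piecewise linear functions on the tropicalization.

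The combinatorial heart is this. Since $C$ is of compact type, its tropicalization $\Gamma$ is a tree, i.e. $H_1(\Gamma)=0$. Fixing the leg slopes $\mathbf a$ and prescribing the vertex divergences $D(v)$ --- namely $D(v)=0$ for $\Div_{g,\mathbf a}^0$ and $D(v)=2h_v-2+m_v$ for $\Div_{g,\mathbf a}^{\can}$ --- the balancing equations at the vertices become $\partial s = D - A$, where $s\in\mathbf Z^E$ records the integer edge slopes, $\partial\colon\mathbf Z^E\to\mathbf Z^V$ is the divergence (boundary) map, and $A(v)=\sum_{\ell\text{ at }v}a_\ell$. The total $\sum_v(D-A)(v)=\sum_vD(v)-\sum_ia_i$ vanishes, because the defining conditions of the two loci force $\sum_vD(v)=\sum_ia_i$ (the total of all outgoing slopes is always $\sum_ia_i$, the edge contributions cancelling in pairs). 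For a connected graph the image of $\partial$ is exactly the group of degree-zero $0$-chains, while for a tree $\ker\partial=H_1(\Gamma)=0$; hence $\partial$ is an isomorphism onto the degree-zero chains and the edge slopes $s$ are uniquely determined. Integrating $s$ against the edge lengths $\ell_e\in\overnorm M_S$ along the unique path from a chosen base vertex produces a piecewise linear $\alpha$ valued in $\overnorm M_S^{\rm gp}$, unique up to the additive constant that Example~\ref{exa:pwl} already discards. Uniqueness also follows from the argument of Lemma~\ref{lem:monomorphism}: a difference of two solutions is a bounded harmonic function, hence constant.

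To globalize, note that \'etale-locally on the compact-type locus the dual graph is a constant tree, the slopes $s$ are locally constant integers, and the rule $v\mapsto\alpha(v)=\sum_e s_e\ell_e$ is manifestly compatible with base change; it therefore defines a section of the forgetful map, inverse to it, so the two loci are equivalent as stacks. I expect the one point requiring genuine care to be the comparison of logarithmic structures. By the minimality computation in the proof of Theorem~\ref{thm:divisalgebraic}, the minimal logarithmic structure of $\Div_{g,\mathbf a}$ is the quotient of $\overnorm N_S$ by the subgroup $K$ generated by the loop relations $\sum_{e\in\gamma}\delta_e=0$; on a tree there are no loops, so $K=0$ and this minimal structure coincides with the minimal logarithmic structure $\mathbf N^E$ of ${}^{\mathrm{ct}}\mathfrak M_{g,n}$. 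This is precisely where compact type is used, and it upgrades the equivalence to one of logarithmic stacks rather than merely of their underlying stacks. Finally one checks compatibility with $\aj$: the bundle $\aj(\alpha)$ has the multidegree prescribed by $D$ and, the curve being of compact type, its gluing at the nodes is forced, so $\aj(\alpha)$ agrees with the distinguished twist of $\mathcal O_C(\sum a_ix_i)$ singled out in Section~\ref{sec:ct}.
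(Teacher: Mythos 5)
Your proof is correct and takes essentially the same route as the paper's: the paper likewise reduces to the fact that on a tree there is a unique piecewise linear function with prescribed leg slopes and vertex divergences (citing the harmonic-function argument of Lemma~\ref{lem:monomorphism} for uniqueness), together with the observation that the subgroup $K$ from the minimality computation in Theorem~\ref{thm:divisalgebraic} vanishes for trees, so the minimal logarithmic structures agree with those of ${}^{\mathrm{ct}}\mathfrak M_{g,n}$. Your explicit linear algebra with the boundary map $\partial s = D - A$, and the closing compatibility check with $\aj$, merely spell out steps the paper leaves implicit.
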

\begin{proof}
On the compact type locus the dual graph of any fiber of $\pi:C\rightarrow S$ is a tree, making the subgroup $K$ of the minimal logarithmic structure in the proof of Theorem~\ref{thm:divisalgebraic} trivial. Thus the minimal logarithmic structures of $\Div_{g,\mathbf a}$ over the compact type locus are the same as those of ${}^{\mathrm{ct}}\Mlog_{g,n}$.  Once the slopes of $\alpha$ on the legs of a tree are specified, there is a unique choice of $\alpha$ that is linear at the vertices (see the proof of Lemma~\ref{lem:monomorphism}) and has the specified slopes on the legs.  Therefore, the only extra datum provided by the section $\alpha$ of $\pi_\ast(\overnorm M_C^{\rm gp})/\overnorm M_S^{\rm gp}$ is the choice of an integer weight $a_i$ at each marked point.

The proof for canonical degrees is similar.
\end{proof}

\subsection{An example of resolved indeterminacy}
\label{sec:ex-res}

We reprise the example of Section~\ref{sec:indeterminacy}.  Let $S_0$ be the spectrum of an algebraically closed field, and $C_0$ the curve shown in the special fiber of Figure~\ref{fig:degen1}.  Let $C$ be a versal deformation of the nodes of $C_0$, with base $S$.  At least after localizing around $S_0$, we can assume that $S$ has two distinct divisors $\Delta_q$ and $\Delta_r$ where the nodes $q$ and $r$, respectively, persist.

Over the complement of $\Delta_q \cap \Delta_r$, the line bundle $\mathcal O_C(np - np')$ has degree zero on the fibers of $C$ and therefore defines a section of $\Pic^0(C)$ over $S$.  Let $Z$ be the closure of this section in $\Pic^0(C)$.

The fiber of $\Pic^0(C)$ over $S_0$ is a $\Gm$-torsor over $\Pic^0(D) \times \Pic^0(E)$, the projection being pullback to the normalization.  The calculations in Section~\ref{sec:indeterminacy} show that, if $n \neq 0$, the fiber of $Z_0$ over a point of $\Pic^0(D) \times \Pic^0(E)$ is either empty or is the entire fiber of $\Pic^0(C)$ over the same point.

\numberwithin{lemma}{subsection}
\begin{lemma} \label{lem:fiber}
If $n \neq 0$, the fiber of $Z_0$ over $(L_D, L_E)$ is nonempty if and only if $L_D \simeq \mathcal O_D(np - aq - br)$ and $L_E \simeq \mathcal O_E(-np' + aq + br)$ where $a$ and $b$ are integers of the same sign such that $a + b = n$.
\end{lemma}
\begin{proof}
There is a symmetry between $n < 0$ and $n > 0$, so we assume without loss of generality that $n > 0$.  

If the fiber of $Z_0$ over $(L_D, L_E)$ is nonempty then we may choose a discrete valuation ring with uniformizer $t$ whose general point lies above the complement of $\Delta_q \cup \Delta_r$ and whose special point lies in $Z_0$.  Let $b$ and $a$ be the orders of tangency to $\Delta_q$ and $\Delta_r$.  Then $L \otimes \mathcal O_C(np' - np)$ restricts to an invertible sheaf associated to a Cartier divisor $\alpha D + \beta E$ supported on the special fiber of $C_T$.  Since $\alpha D + \beta E$ is a Cartier divisor, there must be $k, k', \ell, \ell', c, c' in \mathbf Z$  such that $\alpha = bk + c = ak' + c'$ and $\beta = b\ell + c = a \ell' + c'$ (corresponding to local equations $x^k y^\ell t^c$ for $\alpha D$ and $z^{k'} w^{\ell'} t^{c'}$ for $\beta E$).  These equations imply that $a(k-\ell) = b(\ell' - k')$.  We calculate $L_D$ and $L_E$: 
\begin{align*}
L_D & = \mathcal O_D(np + (\ell - k) q + (\ell' - k') r) \\
L_E & = \mathcal O_D(-np' + (k - \ell) q + (k' - \ell') r)
\end{align*}
Note that $k-\ell$ and $k' - \ell'$ have the same sign, because both signs are the same as the sign of $\alpha - \beta$.  Since $L_D$ and $L_E$ both have degree zero, we have $n = (k-\ell) + (k'-\ell')$.

Conversely, Let $T$ be a curve in $S$ passing through $S_0$, tangent to $\Delta_q$ with order $b$, and tangent to $\Delta_r$ with order $a$.  Let $t$ be a uniformizing parameter for $T$ at $S_0$.  In local coordinates near $q$ on $C_T = C \mathop\times_S T$, we have $xy = t^b$ and near $r$ we have $zw = t^a$.  Then the local equations $x^a=0$ and $z^b=0$ both describe the same Cartier divisor $abD$ on the total space of $C_T$, and $\mathcal O_E(abD) = \mathcal O_E(aq + br)$.  It follows that $\mathcal O_{C_T}(np - np' + abD)$ restricts to $\mathcal O_D(np - aq - br)$ on $D$ and to $\mathcal O_E(-np' + aq + br)$ on $E$.  This shows that $(L_D, L_E)$ is the limit of a curve that lies generically in the complement of $\Delta_q$ and $\Delta_r$ if $L_D$ and $L_E$ have the form required in the statement of the lemma.  
\end{proof}
\numberwithin{lemma}{theorem}

Now we set $\mathbf a = (n, -n)$ and analyze the fiber of $\Div_{g,\mathbf a}$ over $S$.  For simplicity, we focus on the multidegree~$0$ part of $\Div_{g,\mathbf a}$ (that is, the part of $\Div_{g,\mathbf a}$ whose image in $\Pic_{g,n}$ has multidegree~$0$), which we denote $\Div_{g,\mathbf a}^0$.

Let $M_S$ be the minimal logarithmic structure on $S$ associated to the family of curves $C$ over $S$.  Its rank at $S_0$ is $2$.  The fiber of $\Div_{g,\mathbf a}$ over $S_0$ parameterizes quotient logarithmic structures $M'_{S_0}$ of $M_{S_0}$, together with piecewise linear functions $\alpha$ on the dual graph of $C_0$ taking values in $\overnorm M_{S_0}^{\prime\rm gp}$.  We denote this fiber $W$.

Such a piecewise linear function has integral slopes $a$ and $b$ on the edges corresponding to the nodes $q$ and $r$.  These integers must both have the same sign if either is nonzero.  We can compute that
\begin{equation*}
\mathcal O_C(\alpha) \big|_D = \mathcal O_D(np - aq - br)
\end{equation*}
where we have oriented the edges of the dual graph from $E$ to $D$.  The integers $a$ and $b$ are locally constant on $W$, so $W$ has components indexed by partitions of $n$ into nonzero integers $a$ and $b$ of the same sign.  The proof of Lemma~\ref{lem:quasifinite} shows that each of these components maps isomorphically to $Z_0$.

Note that $\Div_{g,\mathbf a}^0(C) \to Z \subset \Pic_{g,2}^0(C)$ is finite and birational onto its image, but in many situations (depending on $g$, $n$, and the locations of $p$, $p'$, $q$, and $r$) it is not an isomorphism.  By Zariski's ``Main Theorem,'' this means that $Z$ is not normal in these cases.  

\subsection{Logarithmic trivializations of line bundles}
\label{sec:line-bundles}

We saw in Theorem~\ref{thm:proper} that $\Div_{g,\mathbf a} \to \Pic_{g,n}$ is a finite map.  Therefore, if $S \to \Pic_{g,n}$ classifies a line bundle $L$ on a logarithmic curve $C$ over $S$, then $S \mathop\times_{\Pic_{g,n}} \Div_{g,\mathbf a}$ is representable by a finite map to $S$ whose fiber over a geometric point $s$ consists of the piecewise linear functions on the tropicalization of $C_s$ whose associated line bundle is $L \big|_{C_s}$.  In this section, we want to give a somewhat more intrinsic characterization of this space.

Suppose that $\mathbf G_m$ acts on $Y$ and that $L$ is a line bundle with associated $\mathbf G_m$-torsor $L^\ast$; that is, $L^\ast$ is the sheaf of isomorphisms $\mathcal O \to L$ and $L = (L^\ast \times \mathcal O) / \Gm$, where the action is antidiagonal.  We denote by $L^\ast \otimes Y = (L^\ast \times Y) / \Gm$ the contraction of $L^\ast$ with $Y$.  Thus $L = L^\ast \otimes \mathcal O$.

Recall from Section~\ref{sec:logstr} that, given $\alpha\in\pi_\ast (\overnorm M_C^{\rm gp})$ on a logarithmic curve $\pi:C\rightarrow S$ there is a canonical line bundle $\mathcal O_C(-\alpha)$ on $C$ associated to $\alpha$ given by taking the line bundle associated to the $\Gm$-torsor $\mathcal O_C^\ast(-\alpha)$ produced by the preimage of $\alpha$ through the quotient map $M_C^{\rm gp} \rightarrow \overnorm M_C^{\rm gp}$.  If $\alpha$ is instead a logarithmic divisor in $\pi_\ast( \overnorm M_C^{\rm gp}) / \overnorm M_S^{\rm gp}$ then by choosing a representative in $\pi_\ast\overnorm M_C^{\rm gp}$, we obtain a line bundle that is well defined up to tensor product by $\mathcal O_C(\delta)$ for $\delta \in \overnorm M_S^{\rm gp}$.

\begin{definition} \label{def:logtriv}
Let $\pi : C \rightarrow S$ be a logarithmic curve and let $L$ be a line bundle on $C$.  A \emph{logarithmic trivialization} of $L$ over $S$ is a section of the quotient $\pi_\ast (L^\ast \otimes M_C^{\rm gp})/M_S^{\rm gp}$.
\end{definition}

% The choice of such a section over $S$ is equivalent to the choice of:
% \begin{enumerate}
% \item a logarithmic divisor $\alpha\in\pi_\ast( \overnorm M_C^{\rm gp})/\overnorm M_S^{\rm gp}$ on the family C/S, and 
% \item an isomorphism $\phi$ between $L$ and the dual line bundle $\mathcal O_C(\alpha)^\vee$ associated to $\alpha$.
% \end{enumerate}

% Let $C \to S$ be a fixed family of $n$-marked pre-stable curves of genus $g$ over $S$, and let $L$ be a line bundle on $C$.  The line bundle gives a map $(C,L) : S \to \Pic_{g,n}$.  Giving $S$ the minimal logarithmic structure associated to the family of curves, we can view $C$ as a family of logarithmic curves over $S$.  

Let $C \to S$ be a fixed family of $n$-marked logarithmic curves over $S$, fix an equivalence class of line bundles $L$ on $C$, up to tensor product by line bundles from the base, and fix a vector $\mathbf a$ of integer weights.

\begin{definition}
We denote by $\Div_{g,\mathbf a}(C,L)$ the category whose objects are:
\begin{enumerate}
\item a logarithmic scheme $S'$ and map $f : S' \to S$, with $C' = f^{-1} C$, and
\item a logarithmic trivialization of $f^\ast L$.
\end{enumerate}
This naturally forms a category fibered in groupoids over $\LogSch$.
\end{definition}

\begin{proposition}\label{prop:divdiagram}
There is a cartesian diagram:
\begin{equation*} \xymatrix{
\Div_{g,\mathbf a}(C, L) \ar[r] \ar[d] & S \ar[d]^{(C, L)} \\
\Div_{g,\mathbf a} \ar[r]_{\aj} & \Pic_{g,n}.
} \end{equation*}
In particular, $\Div_{g,\mathbf a}(C,L)$ is proper over $S$.%, and $\Div_{g,\mathbf a}(C,L)$ is equipped with a virtual fundamental class in the expected dimension $\dim S - g$.
\end{proposition}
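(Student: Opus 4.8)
The plan is to prove the square is cartesian by directly comparing the two moduli problems it presents, and then to deduce properness by base change from Theorem~\ref{thm:proper}. First I would unwind the fiber product. An $S'$-point of $\Div_{g,\mathbf a} \mathop\times_{\Pic_{g,n}} S$ consists of a map $f : S' \to S$ (equivalently the pullback family $C' = f^{-1}C$ together with the class $f^\ast L$), a logarithmic divisor $\alpha$ on a logarithmic curve over $S'$, and an isomorphism in $\Pic_{g,n}$ between $\aj(\alpha)$ and $f^\ast L$. Since the objects of $\Pic_{g,n}$ record the underlying marked curve (Definition~\ref{def:pic}), this isomorphism forces the curve carrying $\alpha$ to be $C'$, and then amounts to an isomorphism $\aj(\alpha) \cong f^\ast L$ that is well defined only up to tensoring by a line bundle pulled back from $S'$, this being the $\BGm_{S'}$-quotient built into $\Pic_{g,n}$.

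Next I would unwind $\Div_{g,\mathbf a}(C,L)$: an $S'$-point is a map $f : S' \to S$ together with a logarithmic trivialization of $L' := f^\ast L$, that is, a section of $\pi_\ast(L' \otimes M_{C'}^{\rm gp})/M_{S'}^{\rm gp}$. The crux is to match these descriptions by showing that a logarithmic trivialization of $L'$ is the same datum as a logarithmic divisor $\alpha$ together with an isomorphism $\aj(\alpha) \cong L'$ up to base twist. To that end I would analyze the contraction $L' \otimes M_{C'}^{\rm gp} = (L')^\ast \times^{\Gm} M_{C'}^{\rm gp}$. Projecting the $M_{C'}^{\rm gp}$-factor to $\overnorm M_{C'}^{\rm gp}$ gives a canonical surjection $L' \otimes M_{C'}^{\rm gp} \to \overnorm M_{C'}^{\rm gp}$ (the line-bundle torsor dies modulo $\mathcal O_{C'}^\ast$), whose fiber over a section $\alpha$ is the contraction $(L')^\ast \times^{\Gm} \mathcal O_{C'}^\ast(-\alpha)$, i.e.\ the $\Gm$-torsor of $L' \otimes \mathcal O_{C'}(-\alpha)$. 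A section of $L' \otimes M_{C'}^{\rm gp}$ therefore records both a section $\alpha$ of $\overnorm M_{C'}^{\rm gp}$ and a trivialization of $L' \otimes \mathcal O_{C'}(-\alpha)$, equivalently an isomorphism $L' \cong \mathcal O_{C'}(\alpha)$. Quotienting by $M_{S'}^{\rm gp}$ has two compatible effects: on the $\overnorm M_{C'}^{\rm gp}$-component it quotients by $\overnorm M_{S'}^{\rm gp}$, so $\alpha$ becomes exactly a logarithmic divisor; and translating by a section $\beta$ of $M_{S'}^{\rm gp}$ twists $\mathcal O_{C'}(\alpha)$ by $\mathcal O_{C'}(\overnorm\beta) = \pi^\ast \mathcal O_{S'}(\overnorm\beta)$, a line bundle pulled back from the base, which is precisely the indeterminacy allowed in $\Pic_{g,n}$.

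The main obstacle, and essentially the only place where genuine bookkeeping lives, is verifying that the line bundle class extracted from the contraction agrees with $\aj(\alpha)$ as produced by the boundary map in the exact sequence~\eqref{eqn:exactforsections}. This is a diagram chase identifying the $\Gm$-torsor $\mathcal O_{C'}^\ast(-\alpha)$ underlying $\aj(\alpha)$ with the one appearing in the fiber of $L' \otimes M_{C'}^{\rm gp} \to \overnorm M_{C'}^{\rm gp}$, and comparing both against $(L')^\ast$; once the torsors are matched, the trivialization of $L' \otimes \mathcal O_{C'}(-\alpha)$ is visibly the same as an isomorphism $\aj(\alpha) \cong L'$ up to base twist. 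I would then check that this identification is functorial in $S'$ and compatible with base change — both sides are assembled from $\pi_\ast$ of \'etale sheaves, whose formation commutes with base change as recorded in the proof of Theorem~\ref{thm:divisalgebraic} — yielding an equivalence of categories fibered in groupoids over $\LogSch$ and hence the cartesian square.

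Finally, for the \emph{in particular}: Theorem~\ref{thm:proper} shows $\aj$ is a closed embedding, and closed embeddings are proper and stable under base change. The left-hand vertical map is, by the cartesian square, the base change of $\aj$ along $(C,L) : S \to \Pic_{g,n}$, so it exhibits $\Div_{g,\mathbf a}(C,L)$ as a closed subscheme of $S$; in particular it is proper over $S$.
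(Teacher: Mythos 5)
Your proposal is correct and follows essentially the same route as the paper: the paper also identifies $\Div_{g,\mathbf a}(C,L) = \pi_\ast(L \otimes M_C^{\rm gp})/M_S^{\rm gp}$ with the locus of logarithmic divisors $\alpha$ satisfying $\aj(\alpha) \cong L$, by viewing $L \otimes M_C^{\rm gp}$ as an $\mathcal O_C^\ast$-torsor over $\overnorm M_C^{\rm gp}$, pushing forward, and quotienting by the sequence $0 \to \mathcal O_S^\ast \to M_S^{\rm gp} \to \overnorm M_S^{\rm gp} \to 0$ — your fiberwise analysis of the contraction is exactly the content of the paper's exact sequence of pointed sets~\eqref{eqn:exactforsections3}. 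The deduction of properness by base change from Theorem~\ref{thm:proper} matches as well.
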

\begin{proof}
Note that $L^\ast \otimes M_C^{\rm gp}$ is an $\mathcal O_C^\ast$-torsor over $\overnorm M_C^{\rm gp}$.  We regard this as an exact sequence of pointed stacks:%
\begin{equation*}
0 \to L^\ast \otimes M_C^{\rm gp} \to \overnorm M_C^{\rm gp} \to \mathrm B \mathcal O_C^\ast
\end{equation*}
Pushing forward by $\pi_\ast$ gives an exact sequence of pointed sets~\eqref{eqn:exactforsections2}, as in~\eqref{eqn:exactforsections}:
\begin{equation} \label{eqn:exactforsections2}
0 \to \pi_\ast (L\otimes M^{\rm gp}_C)  \rightarrow \pi_\ast \overnorm M^{\rm gp}_C  \to \pi_\ast \mathrm B \mathcal O_C^\ast 
\end{equation}
Dividing by the sequence
\begin{equation*}
0 \to M_S^{\rm gp} \to \overnorm M_S^{\rm gp} \to \mathrm B \mathcal O_S^\ast \to 0
\end{equation*}
and noting that $\pi_\ast (\mathrm B \mathcal O_C^\ast) / B \mathcal O_S^\ast = R^1 \pi_\ast \mathcal O_C^\ast$, 
we obtain another exact sequence
\begin{equation} \label{eqn:exactforsections3}
0 \to \pi_\ast(L \otimes M_C^{\rm gp}) / M_S^{\rm gp} \to \pi_\ast(\overnorm M_C^{\rm gp}) / \overnorm M_S^{\rm gp} \to R^1 \pi_\ast \mathcal O_C^\ast
\end{equation}
Here, the map from $\pi_\ast (\overnorm M_C^{\rm gp}) / \overnorm M_S^{\rm gp} \to R^1\pi_\ast \mathcal O_C^\ast$ sends the equivalence class of $\alpha \in \pi_\ast \overnorm M_C^{\rm gp}$ to $L(-\alpha)$.  The sequence therefore identifies $\pi_\ast(L \otimes M_C^{\rm gp}) / M_S^{\rm gp}$ --- that is, $\Div_{g,\mathbf a}(C,L)$ --- with the the $\alpha \in \Div_{g,\mathbf a}(C)$ whose image in $\Pic_{g,n}$ is $L$.  This is the content of the cartesian diagram in the statement of the proposition.
\end{proof}

This proposition can be applied with $S = \Mlog_{g,n}$, with $C$ taken to be the universal curve, and with $L = \omega_{C/S}^k$.  Then $\Div_{g,n}(C, \omega_{C/S}^k)$ is a compactification of the moduli space of $k$-differentials on curves, and we will see in Section~\ref{sec:def} that it is equipped with a virtual fundamental class. In compact type, $\aj$ is the usual Abel--Jacobi map and the virtual fundamental class of $\Div(X,L)$ restricts to the refined intersection of the Abel--Jacobi section and the section determined by $L$.

%\subsection{Deformation theory and virtual fundamental classes}
\numberwithin{equation}{subsection}
\subsection{Twisted double ramification cycle}
\label{sec:def}

The composition of the sequence of morphisms~\eqref{eqn:sequence}
\begin{equation} \label{eqn:sequence}
\Div_{g,\mathbf a} \xrightarrow{\aj} \Pic_{g,n} \xrightarrow{\Pi} \Mlog_{g,n}
\end{equation}
is logarithmically \'etale, the map $\aj$ is a closed embedding, and the map $\Pi$ is strict (by definition of the logarithmic structure on $\Pic_{g,n}$) and smooth.  This implies that $\Pic_{g,n} \mathop\times_{\Mlog_{g,n}} \Div_{g,\mathbf a}$ is strict and smooth over $\Div_{g,\mathbf a}$, and the induced section $\sigma$ is therefore a local complete intersection embedding.  Writing $I$ for the sheaf of ideals of $\sigma$, we therefore have an isomorphism
\begin{equation*}
\aj^\ast I = \aj^\ast \Omega_{\Pic_{g,n} / \Mlog_{g,n}}.
\end{equation*}
Furthermore $\aj^\ast I[1]$ can serve as the obstruction bundle of a relative obstruction theory for $\sigma$ (we may identify $\aj^\ast I[1]$ with the relative cotangent complex of $\sigma$).  Since 
\begin{equation*}
\Pic_{g,n} \mathop\times_{\mathclap{\Mlog_{g,n}}} \Div_{g,\mathbf a} \to \Pic_{g,n}
\end{equation*}
is the base change of a logarithmically \'etale morphism, it is also logarithmically \'etale over $\Pic_{g,n}$.  Therefore our obstruction theory also works as a relative \emph{logarithmic} obstruction theory for $\aj : \Div_{g,\mathbf a} \to \Pic_{g,n}$.  Indeed, the logarithmic cotangent complex of $\Div_{g,\mathbf a} \to \Pic_{g,n} \mathop\times_{\frak M_{g,n}^{\log}} \Div_{g,\mathbf a} \to \Pic_{g,n}$ is quasi-isomorphic to the relative cotangent complex of $\Div_{g,\mathbf a} \to  \Pic_{g,n}$ by the transitivity triangle \cite[1.1~(v) and Remark~3.14]{Olsson-LCC} (note that logarithmically \'etale morphisms are logarithmically flat).

It is also well-known that the relative tangent bundle of $\Pic_{g,n}$ over $\Mlog_{g,n}$ has fiber $H^1(C, \mathcal O_C)$ over the point corresponding to a logarithmic curve $C$.  This proves Theorem~\ref{thm:vfc}.

It follows that, for any $f : S \to \Pic_{g,n}$ classifying a logarithmic curve $C$ over $S$ and line bundle $L$ on $C$, the pullback $\Div(C,L)$ of $\aj : \Div_{g,\mathbf a} \to \Pic_{g,n}$ has a perfect relative obstruction theory $f^\ast \aj^\ast I$.
Let $S$ be an algebraic stack with a logarithmic structure, let $C$ be a logarithmic curve of genus $g$ over $S$, and let $L$ be a line bundle on $C$.  These data give a morphism $S \to \Pic_{g,n}$.
Fix an $n$-tuple of integers $\mathbf a$.  We have (Section~\ref{sec:line-bundles}) 
\begin{equation*}
\Div_{g,\mathbf a}(C,L) = S \mathop\times_{\hskip1em\mathclap{\Pic_{g,n}}} \Div_{g,\mathbf a} .
\end{equation*}
Gysin pullback for the Abel--Jacobi section equips the map $\Div_{g,\mathbf a}(C,L) \to S$ with a virtual pullback.  In particular, if $S$ has a fundamental class $[S]$ (or a virtual fundamental class) then we obtain a relative virtual fundamental class $[\Div_{g,\mathbf a}(C,L)/S]^{\vir}$ by applying the virtual pullback to $[S]$.

We can apply this in particular where $S = \overline{\mathcal M}_{g,n}$, where  $C$ is the universal curve, and where $L$ is $\omega_{C/S}^k$ for any integer $k$.  This gives a cycle class
\begin{equation*}
\bigl[\Div_{g,\mathbf a}(\mathcal M_{g,n}^{\log}, \omega^k)\bigr]^{\vir}
\end{equation*}
in $\mathrm{CH}_{2g - 3 + n}\bigl(\Div_{g,\mathbf a}(\mathcal M_{g,n}^{\log}, \omega^k)\bigr)$.  Since $\Div_{g,\mathbf a}(\omega^{\otimes k})$ is proper over $\overline{\mathcal M}_{g,n}$, we may push this cycle class forward to $\overline{\mathcal M}_{g,n}$ to obtain a twisted double ramification cycle $\DR_{g,\mathbf a}(\mathcal M_{g,n}^{\log}, \omega^k)$.

\subsection{The Abel--Jacobi map}
\label{sec:aj-map}

Here we construct a proper map $\Div_{g,\mathbf a} \to \Pic_g$ extending the map $\mathcal M_{g,\mathbf a} \to \Pic_g$ that sends a smooth curve $C$ with $n$ distinct marked points $x_1, \ldots, x_n$ to $\mathcal O_C(\sum a_i x_i)$.

Suppose that $C$ is a logarithmic curve without markings.  By a \emph{punctured model} of $C$ we will mean a map $\tau : \tilde C \to C$ such that, if the markings of $\tilde C$ are forgotten, the stabilization of $\tau$ is an isomorphism.  We call it a \emph{stable punctured model} if the underlying curve $\tilde C$ is stable.  If $\mathfrak C$ is a tropical curve and $\alpha$ is a piecewise linear function on $\mathfrak C$ that is linear on the edges, we call $\alpha$ \emph{linear} at a vertex of $\mathfrak C$ if the sum of the outgoing slopes of $\alpha$ at that vertex is zero.  We apply the same terminology to logarithmic divisors.

Let $\Div_{g,\mathbf a}^\ast$ be the stack over logarithmic schemes whose $S$-points are the following data:
\begin{enumerate}
    \item a logarithmic curve $C$ over $S$ without marked points,
    \item a stable punctured model $\tau : \tilde C \to C$ with $n$ marked points, and
    \item a logarithmic divisor $\alpha : \tilde C \to \mathcal P$ and slopes $\mathbf a$ on its legs, such that
    \item $\alpha$ is linear at the components of $\tilde C$ collapsed by $\tau$.
\end{enumerate}

Let us also denote by $\Pic_{g,n}^\ast$ the stack over logarithmic schemes whose $S$-points are
\begin{enumerate}
    \item a logarithmic curve $C$ over $S$ without marked points,
    \item a stable punctured model $\tau : \tilde C \to C$ with $n$ marked points, and
    \item an equivalence class of line bundles $\tilde L$ on $\tilde C$, up to tensor product by line bundles from the base, such that
    \item $L$ has degree zero on all components collapsed by $\tau$.
\end{enumerate}

If $\alpha : \tilde C \to \mathcal P$ is an object of $\Div_{g,\mathbf a}^\ast$ then $\mathcal O_{\tilde C}(\alpha)$ is an object of $\Pic_{g,n}^\ast$.  By Theorem~\ref{thm:proper}, we have a proper map:
\begin{equation*}
\Div_{g,\mathbf a}^\ast \to \Pic_{g,n}^\ast.
\end{equation*}
On the other hand, if $(\tau : \tilde C \to C, \tilde L)$ is an object of $\Pic_{g,n}^\ast$ then $\tilde L$ is the pullback to $\tilde C$ of the line bundle $\tau_\ast \tilde L$ on $C$.  Therefore $\Pic_{g,n}^\ast$ is equivalent to the space of
\begin{enumerate}
    \item a logarithmic curve $C$ over $S$ without marked points,
    \item a stable punctured model $\tau : \tilde C \to C$ with $n$ marked points, and
    \item an equivalence class of line bundles $L$ on $C$.
\end{enumerate}
That is, $\Pic_{g,n}^\ast \to \Pic_g$ is proper, with the preimage of $(C,L)$ corresponding to space of punctured models of $C$ (equivalently, the space of stable curves that stabilize to $C$ when their marked points are forgotten).

\section{Rubber comes from the tropics} \label{sec:rubber}

\subsection{Aligned logarithmic curves}
\label{sec:aligned}

\begin{definition} \label{def:aligned}
Let $C$ be a logarithmic curve over $S$ and let $\alpha : C \to \mathcal P$ be a logarithmic divisor on $C$.  We say that $\alpha$ is \emph{aligned} if, for each geometric point $s$ of $S$, the set of $\alpha(v)$, as $v$ ranges among the vertices of $C_s$, is totally ordered.  We write $\Rub$ for the subfunctor of $\Div$ whose objects are aligned logarithmic divisors.
\end{definition}

\begin{proposition}
$\Rub$ is a logarithmic modification of $\Div$, and in particular is representable by an algebraic stack with a logarithmic structure.
\end{proposition}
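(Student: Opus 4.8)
The plan is to realize $\Rub$ as the logarithmic modification of $\Div_{g,\mathbf a}$ associated to an explicit rational polyhedral subdivision of its cone complex, and then invoke the general principle that subdivisions of the cone complex of a logarithmically smooth stack produce modifications that are representable by algebraic stacks with logarithmic structure (and are automatically proper and logarithmically \'etale). Since $\Div_{g,\mathbf a}$ is logarithmically smooth by Corollary~\ref{cor:divbirational}, it has such a cone complex in the sense of \cite{CCUW}, and the whole question becomes combinatorial.

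First I would reduce to a problem on a single cone. Working strict-\'etale-locally and using the minimal logarithmic structures computed in Theorem~\ref{thm:divisalgebraic}, it suffices to treat the universal family over one stratum, given by a fixed combinatorial type: a dual graph $\Gamma$ together with the leg slopes $\mathbf a$ and the integer edge-slopes of $\alpha$. Over such a stratum the characteristic monoid $\overnorm M$ determines a rational polyhedral cone $\sigma = \Hom(\overnorm M, \mathbf R_{\geq 0})$, and by Example~\ref{exa:pwl} the logarithmic divisor $\alpha$ assigns to each vertex $v$ of $\Gamma$ a value $\alpha(v)$, well defined up to a global additive constant, with the pairwise differences $\delta_{vw} = \alpha(v) - \alpha(w)$ lying in $\overnorm M^{\rm gp}$. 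Each $\delta_{vw}$ is therefore a linear functional on $\sigma$, and the alignment condition of Definition~\ref{def:aligned} is precisely the requirement that for every ordered pair $(v,w)$ one has $\delta_{vw} \in \overnorm M$ or $-\delta_{vw} \in \overnorm M$; that is, that each $\delta_{vw}$ has a definite sign on the cone in question.

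Next I would construct the subdivision. The finite hyperplane arrangement $\{\delta_{vw} = 0\}$ cuts $\sigma$ into a finite rational polyhedral subdivision $\Sigma$ whose cones are exactly the loci on which every $\delta_{vw}$ has constant sign; on a cone $\tau$ of $\Sigma$ the values $\{\alpha(v)\}$ become totally ordered in the partial order defined by the monoid dual to $\tau$, so $\Sigma$ is the coarsest subdivision rendering $\alpha$ aligned. I would then verify that these stratumwise subdivisions glue. The arrangement is manifestly equivariant for automorphisms of $\Gamma$, since it is defined symmetrically over all pairs of vertices, so it descends to the cone stack; and it is compatible with the face maps, because passing to a face contracts some edges (sending their lengths to $0$ and merging the corresponding vertices), under which each $\delta_{vw}$ simply restricts. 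Hence the local subdivisions assemble into a single subdivision of the cone complex of $\Div_{g,\mathbf a}$.

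Finally I would identify the resulting modification with $\Rub$ and conclude. By the universal property of the modification attached to $\Sigma$, a logarithmic scheme $T$ with a map to $\Div_{g,\mathbf a}$ lifts to the modification if and only if, at every geometric point $t$ of $T$, the induced homomorphism $\overnorm M \to \overnorm M_{T,t}$ carries the image of the cone of $T$ into a single cone of $\Sigma$ --- equivalently, each pulled-back $\delta_{vw}$ lands in $\overnorm M_{T,t}$ or its negative --- which is exactly the pointwise alignment condition defining $\Rub$. The modification thus represents $\Rub$, and it is an isomorphism over the dense locus of trivial logarithmic structure (where $\Gamma$ has a single vertex and alignment is vacuous), giving properness and birationality for free. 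The step I expect to be the main obstacle is this last identification combined with the gluing: checking that ``factoring through $\Sigma$'' matches alignment \emph{on the nose} across all faces and automorphisms, rather than only up to the subtleties of non-saturated or non-finitely-generated monoids flagged in Section~\ref{sec:valuative} --- precisely the setting in which, as noted at the start of Section~\ref{sec:rubber}, a saturation hypothesis cannot simply be imposed.
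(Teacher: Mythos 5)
Your combinatorial core --- subdividing along the hyperplanes $\alpha(v) = \alpha(w)$ --- is exactly the paper's, so the proposal is right in substance; what differs is the packaging. The paper never assembles a global subdivision of a cone complex for $\Div$. Instead it reduces to showing that $\Rub \times_{\Div} S$ is a logarithmic modification of $S$ for every logarithmic scheme $S$ equipped with a map $S \to \Div$; then, strict-\'etale-locally on $S$ (after arranging that $\Gamma(S, \overnorm M_S) \to \overnorm M_{S,s}$ is bijective and that the dual graphs have trivial monodromy), it chooses a strict map $S \to V$ to an affine toric variety, observes that the $\alpha(v)$ are pulled back from sections of $\overnorm M_V^{\rm gp}$, and pulls back the toric modification of $V$ obtained by subdividing its cone along the same hyperplanes. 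Verifying the universal property after base change to each test scheme is precisely what lets the paper skip the gluing and $\operatorname{Aut}(\Gamma)$-equivariance checks that your global construction must do by hand; your version, in exchange, exhibits $\Rub$ as the coarsest alignment-forcing subdivision of $\Div$ itself, which is a cleaner global statement.

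The obstacle you flag at the end is genuine, and you are right that it is the weak point of quoting general cone-complex machinery: for a test object whose characteristic monoids are not saturated, the condition ``the cone of $T$ maps into a single cone of $\Sigma$'' is strictly weaker than ``each $\delta_{vw}$ pulls back into $\overnorm M_{T,t}$ or its negative,'' and the fs-subdivision formalism silently replaces monoids by their saturations --- which matters here, since the minimal monoids of $\Div$ computed in Theorem~\ref{thm:divisalgebraic} (quotients of free monoids by loop relations) need not be saturated. The paper's resolution appears not in this proof but in Theorem~\ref{thm:rublogmod}: realize the subdivision as an iteration of pullbacks of the blowup of $\mathbf A^2$ at the origin, one incomparable pair $(\alpha(v), \alpha(w))$ at a time. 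Because the charts of that blowup are free monoids, its pullback represents the comparability condition on the nose on all integral logarithmic schemes, saturated or not, and the iteration terminates after finitely many pairs. Substituting this pair-by-pair construction for the appeal to fs machinery closes the gap and makes your argument complete.
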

\begin{proof}
It is sufficient to show that $\Rub \mathop\times_\Div S$ is a logarithmic modification for each logarithmic scheme $S$ and map $S \to \Div$.  In other words, we must show that if $S$ is a logarithmic scheme, $C$ is a logarithmic curve over $S$, and $\alpha : C \to \mathcal P$ is a logarithmic divisor on $C$, then the space of alignments of $\alpha$ is a logarithmic modification of $S$.

This is a local assertion in the strict \'etale topology on $S$, so we can assume that there is a geometric point $s$ of $S$ such that
\begin{equation*}
\Gamma(S,\overline M_S) \to \overline M_{S,s}
\end{equation*}
is a bijection and that the family of dual graphs of the fibers of $C$ over the closed stratum of $S$ has trivial monodromy.

Let $\Gamma$ be the tropicalization of $C_s$.  For each vertex $v$ of $\Gamma$, we get a value $\alpha(v) \in \overline M_S^{\rm gp}$, and $\alpha$ is aligned if and only if these elements are totally ordered.  

Passing to an \'etale cover of $S$, we can assume that all of the $\alpha(v)$ are pulled back along some map from $S$ to a toric variety $V$.  The space of alignments of $\alpha$ is also pulled back from $V$, where it is easily seen to be representable by a toric modification:  subdivide the cone of $V$ along the hyperplanes $\alpha(v) = \alpha(w)$.
\end{proof}

As a special case of the constructions in this section and as motivation for what's to come, we identify the fiber product
\begin{equation*}
\Rub_{g,\mathbf a}(\mathcal M_{g,n}^{\log}, \mathcal O) = \Rub_{g,\mathbf a} \:\mathop\times_{\mathclap{\hskip1em\Pic_{g,n}}} \:\mathcal M^{\log}_{g,n}
\end{equation*}
where $\mathcal M^{\log}_{g,n}$ is the moduli space of stable logarithmic curves and the map $\mathcal M^{\log}_{g,n} \to \Pic_{g,n}$ sends a curve to the trivial line bundle on that curve. The following theorem will be demonstrated in Section~\ref{sec:rubber-maps}.

\begin{theorem}
$\Rub(\mathcal M_{g,n}^{\log}, \mathcal O)$ is the moduli space of predeformable maps from prestable curves to rubber targets.  The stable locus is the space of stable maps to rubber targets introduced by Graber and Vakil~\cite{GV}.
\end{theorem}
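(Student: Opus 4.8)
The plan is to unwind both sides of the claimed identification and produce a natural equivalence of moduli problems, arranging matters so that the alignment condition on the tropical side encodes exactly the chain (accordion) structure of an expanded rubber target. First I would describe the $S$-points of the fiber product concretely. By Definition~\ref{def:aligned} and the cartesian diagram of Proposition~\ref{prop:divdiagram}, an $S$-point of $\Rub(\mathcal M_{g,n}^{\log}, \mathcal O)$ is a stable logarithmic curve $\pi : C \to S$ together with an aligned logarithmic divisor $\alpha$ whose image $\aj(\alpha)$ is the trivial bundle $\mathcal O$. Specializing the exact sequence~\eqref{eqn:exactforsections3} to $L = \mathcal O$, the vanishing of $\aj(\alpha)$ in $R^1\pi_\ast \mathcal O_C^\ast$ is precisely the condition that $\alpha$ lift to a logarithmic trivialization, i.e.\ to a section $\widetilde\alpha$ of $\pi_\ast M_C^{\rm gp}/M_S^{\rm gp}$. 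Hence such an object is the same as a genuine logarithmic function $\widetilde\alpha$ on $C$, taken up to scaling by $M_S^{\rm gp}$ from the base, whose tropicalization is the aligned piecewise linear function $\alpha$ with outgoing slopes $\mathbf a$ on the legs. Equivalently, it is a morphism $C \to \logGm$ modulo the base $\Gm$-action, subject to the alignment constraint.

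Second, I would convert this logarithmic function into a stable map to a rubber target. Recall the two logarithmic modifications recorded in Section~\ref{sec:trop-lines}: $\logGm$ has one representable by $\mathbf P^1$ with its toric logarithmic structure, and $\tropGm$ has one representable by $[\mathbf P^1/\Gm]$. After the logarithmic modification that resolves it, the map $C \to \logGm$ becomes a logarithmic map from a semistable model of $C$ into an expansion of $\mathbf P^1$, and the base $\Gm$-quotient records the rubber identification $[\mathbf P^1/\Gm]$; this semistable source, which stabilizes back to $C$, is the prestable curve of the rubber map. Here I would verify that the alignment of $\alpha$ --- the requirement that the values $\alpha(v)$ be totally ordered --- is exactly the combinatorial condition forcing this expansion to be a \emph{chain} of copies of $\mathbf P^1$ rather than a more general toric degeneration: the totally ordered values index the levels of the accordion, the integer slopes of $\alpha$ along edges give the contact orders of the map at the nodes, and the leg slopes $\mathbf a$ record the tangency of the map with $0$ and $\infty$ at the marked points.

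Third, I would identify predeformability and the stable locus. The predeformability (transversality) condition of Li and Graber--Vakil demands that no component be crushed into the singular locus of the target and that the two branches at a node mapping to a node of the target carry matching contact orders; in the logarithmic picture this is automatic, since $\widetilde\alpha$ is a single-valued logarithmic function whose tropicalization is continuous across each edge, so the incoming and outgoing slopes at a node necessarily agree. This is the familiar fact that logarithmic maps to $\mathbf P^1$ are automatically predeformable. It then remains to match stability: Graber--Vakil's stable maps are precisely those with finitely many automorphisms, which corresponds to the stable locus of $\Rub$, and one must check that the equivalence relation on rubber targets (scaling each level by $\Gm$) coincides with the quotient by $M_S^{\rm gp}$ under the dictionary above.

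The main obstacle will be the precise comparison with Graber--Vakil's moduli of expansions: I must show that the logarithmic modification effecting the alignment reproduces \emph{exactly} their stack of expanded rubber targets, including its automorphisms and the identification of isomorphic expansions, and that this is compatible with arbitrary base change. I expect to reduce this to the now-standard equivalence between logarithmic stable maps to $\mathbf P^1$ and Li's predeformable maps into expansions, transported across the $\Gm$-quotient; the delicate point is bookkeeping the rubber $\Gm$-actions level by level and verifying that the minimal logarithmic structures agree on the two sides, which is where the results of Section~\ref{sec:rubber-maps} are used.
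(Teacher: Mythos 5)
Your dictionary is the paper's own: the proof of this theorem is precisely the content of Section~\ref{sec:rubber-maps}, where a pair $(C,\alpha)$ with $\mathcal O_C(\alpha)$ trivial is converted into the subdivision $\tau : \tilde C = C \times_{\mathcal P} \mathcal Q \to C$ determined by the totally ordered values $\alpha(v)$, a family of $2$-marked semistable rational curves $Q$ over the base, and a section $\sigma : \tilde C \to Q$ avoiding the nodes of $Q$ --- with alignment accounting for the chain structure, edge slopes for contact orders, and predeformability automatic, exactly as you describe (these are the data \ref{r1}--\ref{r5}, specialized to trivial $L$ in Corollary~\ref{cor:rubber}). Your first two steps, including the use of the exact sequence~\eqref{eqn:exactforsections3} to identify points of the fiber product with aligned logarithmic trivializations, match the paper.

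The gap is in your completion. The comparison theorem you propose to quote (logarithmic stable maps to $\mathbf P^1$ versus Li's predeformable maps to expansions) concerns a \emph{parameterized} target; the rubber statement is not a formal consequence of it, and ``transporting across the $\Gm$-quotient'' is exactly what must be proven --- there is no external result to cite here, since this equivalence is one of the paper's contributions, and your appeal to ``the results of Section~\ref{sec:rubber-maps}'' for the delicate point is an appeal to the very proof you are trying to reconstruct. Concretely, two ingredients are missing. First, the inverse construction: a Graber--Vakil rubber map is a map out of the prestable curve $\tilde C$, whereas a point of $\Rub(\mathcal M_{g,n}^{\log}, \mathcal O)$ is a piecewise linear function on the \emph{stabilization} $C$; your outline never explains how data on $\tilde C$ descends to $C$. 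The paper does this in Proposition~\ref{prop:rub-stab} via the isomorphism $M_C^{\rm gp} \to \tau_\ast M_{\tilde C}^{\rm gp}$, which holds because $\tau$ contracts rational components; without it you have only the forward functor $\Rub \to \{\text{rubber maps}\}$, not an equivalence. Second, the level-by-level $\Gm$ bookkeeping you defer is handled by Theorem~\ref{thm:T}, $\Mss_{0,2} \simeq \mathscr T \times \BGm$: a $2$-marked semistable chain is the same thing as a divided tropical line together with a line bundle, and this rigidification is what identifies Graber--Vakil's expansions, with their scaling automorphisms, with tropical data --- again with no appeal to the parameterized theory.
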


\subsection{Divided tropical lines}
\label{sec:divided}

\begin{definition}
Let $S$ be a logarithmic scheme.  A \emph{divided tropical line bundle} over $S$ is a tropical line bundle $\mathcal P$ and a subfunctor $\mathcal Q \subset \mathcal P$ such that, locally in $S$, there are sections $\gamma_0 \leq \cdots \leq \gamma_n$ of $\mathcal P$ and $\mathcal Q$ is the set of sections of $\mathcal P$ that are comparable to all of the $\gamma_i$.
Given any tropical line $\mathcal P$, we write $\mathcal P_\gamma$ for the divided tropical line over $S$ associated to a sequence $\gamma_0 \leq \cdots \leq \gamma_n$ of $\mathcal P$.  If $\mathcal P$ is the tropical line bundle associated to a logarithmic line bundle $P$ then we also write $P_\gamma = \mathcal P_\gamma \mathop\times_{\mathcal P} P$.
\end{definition}

\begin{example}
Over any base $S$, we may take $P = \logGm$, $\mathcal P = \tropGm$, and $\gamma_0 = 0$ to get a divided tropical line $\mathcal P_\gamma$ and a divided logarithmic line $P_\gamma$.  In this case $P_\gamma = \mathbf P^1$, with its standard logarithmic structure, and $\mathcal P_\gamma = [ \mathbf P^1 / \Gm ]$. 
\end{example}

If $\mathcal P_\gamma$ is a divided tropical line over $S$, the sequence $\gamma_0 \leq \cdots \leq \gamma_n$ cannot be recovered from $\mathcal P_\gamma$.  However, we will show that the differences $\delta_i = \gamma_{i+1} - \gamma_i$ can (see Figure~\ref{fig:pretrop-line}).

Consider the locus $U_i \subset \mathcal P_\gamma$ consisting of those $\alpha$ such that $\gamma_i \leq \alpha \leq \gamma_{i+1}$.  This is an open subfunctor of $\mathcal P_\gamma$.  Let $\beta = \alpha - \gamma_i$ and let $\beta' = \gamma_{i+1} - \alpha$.  Then $\beta + \beta' = \delta_i$, which means that 
\begin{equation*}
U_i(T) \simeq \{ (\beta, \beta') \in \Gamma(T, \overline M_T) \times \Gamma(T, \overline M_T) \: \big| \: \beta + \beta' = \delta_i \}
\end{equation*}
functorially in $T$.  It follows from this that the minimal logarithmic structure of $S$ over which $\mathcal P_\gamma$ is defined is freely generated by the nonzero $\delta_i$.  (If $\delta_i = 0$, then $U_i$ is an open subset of $U_{i+1}$ and $U_{i-1}$, so it can be omitted from discussion.)

We argue that the order of the $\delta_i$ is also determined by $\mathcal P_\gamma$.  Indeed, $U_i \cap U_j \neq \varnothing$ if and only if $\gamma_{i+1} = \gamma_j$ or $\gamma_{j+1} = \gamma_i$.  It is not possible for both to occur unless $\gamma_i = \gamma_{i+1} = \gamma_j = \gamma_{j+1}$, in which case $\delta_i = \delta_j = 0$.  Otherwise, we can recognize that $i \leq j$ if $\gamma_{i+1} = \gamma_j$ and that $j \leq i$ if $\gamma_{i} = \gamma_{j+1}$. 

We also observe that if $\mathcal Q$ is a divided tropical line with lengths $\delta_1, \ldots, \delta_n$ and we set
\begin{gather*}
\gamma_i = \sum_{j \leq i} \delta_j \\
\gamma'_i = -\sum_{j \geq i} \delta_j
\end{gather*}
then $\mathcal Q \simeq \mathcal P_\gamma \simeq \mathcal P_{\gamma'}$ where $\mathcal P = \tropGm$.  In other words, the tropical line bundle underlying a divided tropical line bundle has two canonical trivializations, corresponding to the smallest and largest vertices of the subdivision.

\begin{remark}
Minimal tropical lines with at least one division were called \emph{aligned logarithmic structures} in \cite{ACFW}, where they were studied in terms of the sequence $\delta_1, \ldots, \delta_n$ and the condition~\eqref{eqn:3}:
\begin{equation} \label{eqn:3}
0 \leq \delta_1 \leq \delta_1 + \delta_2 \leq \cdots \leq \delta_1 + \cdots + \delta_n.
\end{equation}
\end{remark}

\begin{figure}
\begin{tikzpicture}[arrow/.style={decoration={markings, mark=at position .5 with {\arrow{angle 90}}},postaction={decorate}}]
\coordinate (A) at (-3,0);
\coordinate (B) at (-1,0);
\coordinate (C) at (1,0);
\coordinate (D) at (3,0);
\coordinate (E) at (5,0);
\draw[arrow] (A) -- (B);
\draw[arrow] (B) -- (C);
\draw[arrow] (C) -- (D);
\draw[arrow] (D) -- (E);
\draw[black,fill=black] (B) circle (.6mm);
\draw[black,fill=black] (C) circle (.6mm);
\draw[black,fill=black] (D) circle (.6mm);
\node at (0,0.4) {$\delta_1$};
\node at (2,.4) {$\delta_2$};
\node at (-2,.4) {$\infty$};
\node at (4,.4) {$\infty$};
\end{tikzpicture}
\caption{A divided tropical line with $\gamma_2 = \gamma_1 + \delta_1$ and $\gamma_3 = \gamma_2 + \delta_2$.}
\label{fig:pretrop-line}
\end{figure}

\begin{proposition} \label{prop:chain}
Let $\mathcal P$ be a tropical line.  If $\gamma = \{ \gamma_1 \leq \cdots \leq \gamma_n \}$ is a nonempty collection of sections of $\mathcal P$ then $\mathcal P_\gamma$ is representable by an algebraic stack over $S$ with a logarithmic structure and $P_\gamma$ is representable by a family of $2$-marked, semistable, genus~$0$ curves.
\end{proposition}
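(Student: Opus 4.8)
The plan is to construct $P_\gamma$ explicitly as a family of curves by gluing the charts already identified above, and then to deduce the statement about $\mathcal P_\gamma$ by exhibiting it as a quotient stack. The first point to record is that, by Proposition~\ref{prop:univ-torsor}, the projection $P_\gamma = \mathcal P_\gamma \times_{\tropGm} \logGm \to \mathcal P_\gamma$ is a $\Gm$-torsor; it therefore suffices to produce $P_\gamma$ as a scheme carrying a $\Gm$-action and a logarithmic structure, after which $\mathcal P_\gamma = [P_\gamma/\Gm]$ is automatically an algebraic stack with a logarithmic structure. Both assertions are strict-\'etale local on $S$ and compatible with strict base change, and we have already seen that $\mathcal P_\gamma$ is pulled back from a universal example over the minimal base, whose characteristic monoid is freely generated by the nonzero $\delta_i$. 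I would therefore reduce to the universal case over the standard affine toric base with the $\delta_i$ as coordinate smoothing parameters, although the charts below make sense over any $S$.

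For the construction itself I would use the open cover of $\mathcal P_\gamma$ by the loci $U_i$ introduced above, together with the two end charts $U_0 = \{\alpha \leq \gamma_1\}$ and $U_n = \{\alpha \geq \gamma_n\}$, and their preimages $V_i = U_i \times_{\tropGm} \logGm$ in $P_\gamma$. The crucial step is to recognize each $V_i$ as a standard logarithmic chart. For $1 \leq i \leq n-1$, the description $U_i(T) \simeq \{(\beta,\beta') : \beta + \beta' = \delta_i\}$ obtained above identifies $V_i$, after lifting $\beta,\beta'$ to $M^{\rm gp}_T$, with the functor of points of the standard logarithmically smooth node smoothing the parameter $\delta_i$: this is exactly the local model (iii) of a logarithmic curve from Section~\ref{sec:logcurves}, with underlying scheme \'etale-locally $\operatorname{Spec}\mathcal O_S[x_i,y_i]/(x_iy_i - s_i)$ for a local equation $s_i$ lifting $\delta_i$. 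The two end charts $V_0$ and $V_n$ involve a single coordinate and are affine lines $\mathbf A^1$ with their toric logarithmic structure (the smooth-point model (ii) of Section~\ref{sec:logcurves}), representing $T \mapsto \Gamma(T, M_T)$, with the origin serving as a marked point.

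I would then glue. The chart $V_i$ meets $V_{i+1}$ along the locus $\{\alpha = \gamma_{i+1}\}$, where the coordinate $\gamma_{i+1}-\alpha$ on $V_i$ and the coordinate $\alpha - \gamma_{i+1}$ on $V_{i+1}$ become mutually inverse units; the transition is thus the standard toric inversion that glues two affine charts into a $\mathbf P^1$. This presents $P_\gamma$ as a flat, proper family whose geometric fibers are chains of projective lines, with one marked point at the origin of each terminal chart $V_0$ and $V_n$. Over the open torus, where every $\delta_i$ is invertible, all nodes are smoothed and the fiber is a single smooth $\mathbf P^1$; the $i$th node appears precisely where $\delta_i$ vanishes, so the arithmetic genus is $0$ on every fiber and each component carries exactly two special points, giving a semistable $2$-marked genus~$0$ curve. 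Passing to the $\Gm$-quotient then yields the claim for $\mathcal P_\gamma$.

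I expect the main obstacle to be the bookkeeping at the degenerate strata where several division points collide (some $\delta_i = 0$): there one must check that the logarithmic structures on the charts genuinely agree on overlaps, that the total space stays flat and proper with the expected chain-of-$\mathbf P^1$ fibers rather than acquiring spurious components or worse singularities, and that the two marked points remain disjoint from the nodes. Matching the chartwise functors of points with the node and affine-line local models, and verifying that this gluing descends correctly along the $\Gm$-torsor $P_\gamma \to \mathcal P_\gamma$, is the technical heart of the argument.
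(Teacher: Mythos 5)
Your chart-level analysis reproduces the paper's exactly: the same loci $U_i$, the same identification of the middle charts with $S \mathop\times_{\mathbf A^1} \mathbf A^2$ (the standard node smoothed by $\tilde\delta_i$) and of the end charts with $\mathbf A^1_S$, the same inversion gluing along $\{\alpha = \gamma_{i+1}\}$, and the same final passage $\mathcal P_\gamma = [P_\gamma/\Gm]$. The genuine gap is properness. You write that the gluing ``presents $P_\gamma$ as a flat, proper family'' because its geometric fibers are chains of projective lines, but properness is not a fiberwise condition: for example, $\mathbf P^1_R \sqcup \operatorname{Spec} K \to \operatorname{Spec} R$ (with $R$ a DVR with fraction field $K$) is flat, separated, of finite type, and has proper fibers, yet is not proper. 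So exhibiting nodal charts with chain fibers does not by itself yield properness of the family, and properness is precisely the substantive content of the proposition --- it is what later feeds into the properness of $\Rub \to \Div$ and of the rubber spaces. You do flag this verification as ``the technical heart,'' but no argument for it is actually proposed, so as written the proof establishes only that $P_\gamma$ is a flat family of nodal curves with the expected fibers. (A smaller but related gap: to know the glued scheme \emph{represents} the functor $P_\gamma$, you must check that the $V_i$ are open subfunctors that cover $P_\gamma$ as a functor, not merely pointwise; this is entangled with the $\delta_i = 0$ bookkeeping you defer.)

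The paper closes exactly this hole with a functorial argument requiring no gluing at all: it verifies the logarithmic valuative criterion of Section~\ref{sec:valuative}. Over the spectrum $S$ of a valuation ring carrying the \emph{maximal} extension of the generic logarithmic structure, a section of $P_\gamma$ over the generic point $\eta$ is an element $\tilde\alpha \in \Gamma(\eta, M_\eta^{\rm gp})$ comparable to the $\tilde\gamma_i$; by construction of the maximal extension one has $\Gamma(\eta, M_\eta^{\rm gp}) = \Gamma(S, M_S^{\rm gp})$, so the extension of $\tilde\alpha$ is automatic, and its comparability to the $\tilde\gamma_i$ extends because the generization map $\overnorm M_S \to j_\ast \overnorm M_\eta$ is relatively valuative (Proposition~\ref{prop:rel-val}). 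If you prefer to keep your explicit route, you can repair it by observing that, locally on $S$, your glued family is the pullback, along the map $(\tilde\delta_1, \ldots, \tilde\delta_n) : S \to \mathbf A^n$, of a single universal chain family over $\mathbf A^n$, and then proving that this universal family is projective (for instance, it is an iterated blowup of $\mathbf P^1 \times \mathbf A^n$, or embeds into $(\mathbf P^1)^{n+1} \times \mathbf A^n$); properness then follows by base change. Either way, an actual properness argument must be supplied --- this is where your proposal and the paper's proof genuinely diverge, and where yours is incomplete.
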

\begin{proof}
The assertion is local in $S$, so we may assume $\mathcal P \simeq \tropGm$.

% We have seen above that
% \begin{equation*}
% \mathcal P_\gamma = U_0 \mathop\cup_{V_1} \cdots \mathop\cup_{V_n} U_n
% \end{equation*}
% where $U_i$ is the locus of those $\alpha \in \mathcal P_\gamma$ such that $\gamma_i \leq \alpha \leq \gamma_{i+1}$, with the convention that $\gamma_0 = -\infty$ and $\gamma_{n+1} = +\infty$ and $V_i$ is the locus where $\alpha = \gamma_i$.  But there is a cartesian diagram
% \begin{equation*} \xymatrix{
%     U_i \ar[r] \ar[d] & \mathcal A^2 \ar[d] \\
%     S \ar[r] & \mathcal A^1
% } \end{equation*}
% for $1 \leq i \leq n-1$ where $\mathcal A^1$ is the functor whose value on a logarithmic scheme $T$ is $\Gamma(T, \overline M_T)$ and is representable by $[\mathbf A^1 / \Gm]$, with its standard logarithmic structure.  We have $U_0 \simeq U_n \simeq \mathcal A^1$.  It follows that $U_i$ is representable by an algebraic stack with a logarithmic structure, and $V_i$ is obviously isomorphic to $S$.  Therefore the $U_i$ give an open cover of $\mathcal P_\gamma$ by algebraic stacks, so $\mathcal P_\gamma$ is algebraic.

We have $P_\gamma = \mathcal P_\gamma \mathop\times_{\tropGm} \logGm$.  We will check that $P_\gamma$ is a family of $2$-marked semistable curves over $S$.  The assertion about $\mathcal P_\gamma$ follows, since $\mathcal P_\gamma = [P_\gamma / \Gm]$.

The assertions of the proposition are local in $S$, so we may assume that the $\gamma_i$ lift to elements $\tilde\gamma_i \in \Gamma(S, M_S^{\rm gp})$.  Then a morphism $X \to P_\gamma$ consists of a morphism $f : X \to S$ and $\tilde\alpha \in \Gamma(X, M_X^{\rm gp})$ with image $\alpha \in \Gamma(X, \overnorm M_X^{\rm gp})$ such that $\alpha$ is comparable to every $\gamma_i$.  

We argue first that $P_\gamma$ is a flat family of nodal curves.  Consider the locus $U_i \subset P_\gamma$ where $\gamma_i \leq \alpha \leq \gamma_{i+1}$.  Taking $\tilde\delta_i = \tilde\gamma_{i+1} \tilde\gamma_i^{-1}$, we can identify $U_i(X)$ with the locus of pairs $(\tilde\beta, \tilde\beta') \in \Gamma(X, M_X^{\rm gp})$ such that $\tilde\beta \tilde\beta' = \tilde\delta_i$.  In other words, there is a cartesian diagram of logarithmic schemes:
\begin{equation*} \xymatrix@C=6em{
    U_i \ar[r]^{(\tilde\beta,\tilde\beta')} \ar[d] & \mathbf A^2 \ar[d] \\
    S \ar[r]^{\tilde\delta_i} & \mathbf A^1
} \end{equation*}
where the map $\mathbf A^2 \to \mathbf A^1$ is multiplication.  This proves $U_i$ is representable by a logarithmic scheme, flat over $S$, and has nodal curves as its fibers.

At the extremes, $i = 0$ and $i = n$, a similar argument gives $U_i \simeq \mathbf A^1_S$.

Now we argue that $P_\gamma$ is proper.  For this we may assume that $S$ is the spectrum of a valuation ring with generic point $\eta$, that $S$ has the maximal logarithmic structure extending the logarithmic structure from $\eta$, and that we are given a section of $P_\gamma$ over $\eta$.  That is, we are given an element $\tilde\alpha$ of $\Gamma(\eta, M_\eta^{\rm gp})$ that is comparable to the $\tilde\gamma_i$.  We wish to show that this extends uniquely to $\Gamma(S, M_S^{\rm gp})$.  But by construction, $\Gamma(\eta, M_\eta^{\rm gp}) = \Gamma(S, M_S^{\rm gp})$, so we obtain the lift of $\tilde\alpha$ automatically.  The comparability to the $\tilde\gamma_i$ is not as immediate, because the partial order on $M_S^{\rm gp}$ has fewer relations than does $M_\eta^{\rm gp}$.  However, because $M_S$ is the maximal extension of $M_\eta$, any two elements of $\overnorm M_S^{\rm gp}$ whose images are comparable in $\overnorm M_\eta^{\rm gp}$ are also comparable in $\overnorm M_S$ (Proposition~\ref{prop:rel-val}).  This completes the proof of properness.

Finally, we check that the fibers of $P_\gamma$ are semistable, $2$-marked curves.  We may assume $S$ is the spectrum of an algebraically closed field.  Let $V_i$ be the locus in $P_\gamma$ where $\alpha = \gamma_i$.  Then we combine
\begin{equation*}
P_\gamma = U_0 \mathop\cup_{V_1} U_1 \mathop\cup_{V_2} U_2 \mathop\cup_{V_3} \cdots \mathop\cup_{V_{n-1}} U_{n-1} \mathop\cup_{V_n} U_n
\end{equation*}
with the identifications 
\begin{align*}
U_0 & \simeq \mathbf A^1_S \\
U_i & \simeq S \mathop\times_{\mathbf A^2} \mathbf A^1 \\
U_n & \simeq \mathbf A^1
\end{align*}
and a quick check of the gluing to confirm that $P_\gamma$ is a $2$-marked, semistable curve.
\end{proof}

The proposition shows that any tropical line $\mathcal P_\gamma$ over $S$ with at least one division gives rise to a family of $2$-marked, semistable, rational curves $P_\gamma$.  Thus $\gamma$ determines a morphism $S \to \Mss_{0,2}$.  Conversely, if $P$ is such a family of curves over a logarithmic scheme $S$, then locally $P$ has smoothing parameters $\delta_1, \delta_2, \ldots, \delta_n \in \Gamma(S, \overnorm M_S)$ associated to its nodes.  The sequence of elements
\begin{equation*}
\gamma_i = \sum_{j \leq i} \delta_i
\end{equation*}
is then a divided tropical line over $S$.

These two operations are not quite inverse to one another because the family of rational curves carries a $\Gm$ automorphism that is not visible in the tropical line.  It is shown that in \cite{ACFW} that when this automorphism is rigidified away, $\Mss_{0,2}$ is indeed the moduli space of tropical lines with at least one division.

\numberwithin{equation}{theorem}
\begin{theorem} \label{thm:T}
    There is an algebraic stack $\mathscr T$ with logarithmic structure parameterizing families of tropical lines with at least one division, and $\Mss_{0,2} \simeq \mathscr T \times \BGm$.
\end{theorem}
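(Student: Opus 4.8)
The plan is to realize $\Mss_{0,2}$ as a $\Gm$-gerbe over the moduli of divided tropical lines and then to split this gerbe. Define $\mathscr T$ to be the stack whose $S$-points are tropical lines $\mathcal P_\gamma$ over $S$ carrying at least one division. Its algebraicity and logarithmic structure come from the minimal-object analysis preceding the theorem: a divided tropical line is determined by the ordered tuple of lengths $\delta_1, \dots, \delta_n$, its minimal logarithmic structure is freely generated by the nonzero $\delta_i$, and the order of the $\delta_i$ is recovered from $\mathcal P_\gamma$; this presents $\mathscr T$ as an Artin fan glued from toric pieces of the form $[\mathbf A^n/\Gm^n]$. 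Equivalently, following \cite{ACFW}, $\mathscr T$ is the rigidification of $\Mss_{0,2}$ along its universal global $\Gm$-automorphism, which already secures algebraicity.

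First I would make the two comparison morphisms precise. In one direction, Proposition~\ref{prop:chain} sends a divided tropical line $\mathcal P_\gamma$ to the family $P_\gamma = \mathcal P_\gamma \times_{\tropGm} \logGm$ of $2$-marked semistable genus~$0$ curves, giving $\mathscr T \to \Mss_{0,2}$. In the other direction, a family $P \to S$ of such curves has, \'etale-locally, smoothing parameters $\delta_1, \dots, \delta_n \in \Gamma(S, \overnorm M_S)$ ordered along the chain; setting $\gamma_i = \sum_{j \le i} \delta_j$ produces a divided tropical line and hence a morphism $\Mss_{0,2} \to \mathscr T$. Because $\logGm \to \tropGm$ is the universal $\Gm$-torsor (Proposition~\ref{prop:univ-torsor}) and $\mathcal P_\gamma = [P_\gamma/\Gm]$, the composite $\mathscr T \to \Mss_{0,2} \to \mathscr T$ is the identity, while the composite $\Mss_{0,2} \to \mathscr T \to \Mss_{0,2}$ recovers each curve only up to the global $\Gm$-automorphism that scales the $\logGm$-coordinate. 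This exhibits $\Mss_{0,2} \to \mathscr T$ as a $\Gm$-gerbe banded by this central automorphism.

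It then remains to split the gerbe, that is, to produce a line bundle on $\Mss_{0,2}$ on which the banding $\Gm$ acts with weight one. For this I would take the relative tangent line $T_{x_0} = x_0^\ast T_{P/S}$ of the universal curve at the first marked section. Using the charts of Proposition~\ref{prop:chain}, near $x_0$ the local coordinate is scaled linearly by the global $\Gm$, so this $\Gm$ acts on $T_{x_0}$ (after possibly dualizing) with weight exactly one. Since a weight-one line bundle trivializes a $\Gm$-gerbe and furnishes a section $\mathscr T \to \Mss_{0,2}$, the classifying map $\Mss_{0,2} \to \mathscr T \times \BGm$, sending $P$ to its tropicalization together with the torsor $T_{x_0}$, is an isomorphism.

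The main obstacle I anticipate is bookkeeping around the two $\Gm$'s: one must reconcile the toric stabilizer $\Gm^n$ of a point of $\mathscr T$ with the automorphisms of the corresponding $n$-noded $2$-marked chain, the discrepancy being precisely the single central scaling that bands the gerbe, and one must pin the weight of the tangent-line action to $\pm 1$ rather than some larger integer (which would only yield a $\mu_k$-gerbe splitting). Verifying that the backward morphism glues globally --- in particular handling the degenerate case $\delta_i = 0$, where a division collapses and the corresponding component is absorbed --- is the remaining routine-but-delicate point, and is exactly what the divided-tropical-line discussion above already arranges.
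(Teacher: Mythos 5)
Your proposal is correct and takes essentially the same approach as the paper: the paper's map $\Mss_{0,2} \to \mathscr T \times \BGm$ is likewise given by the divided tropical line extracted from the smoothing parameters together with the normal bundle (equivalently, the tangent line) at the first marked point, and its inverse is the twist $(\mathcal P_\gamma, L) \mapsto L \otimes P_\gamma$. Your banded-gerbe packaging --- identifying the central $\Gm$ as the relative automorphism group and splitting with the weight-one line bundle $T_{x_0}$ --- is a reformulation of that same explicit inverse, since proving local transitivity of the gerbe amounts to exhibiting every family as such a twist.
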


We sketch the argument using the terminology of this paper.  Suppose that $P$ is a family of $2$-marked, semistable, genus~$0$ curves over $S$.  If $s$ is a geometric point of $S$ and $P_s$ has $n$ nodes then $\overline M_{S,s}$ contains smoothing parameters for those nodes $\delta_1, \ldots, \delta_n$; because $P_s$ is a chain of rational curves, the $\delta_i$ have a canonical order, with the indices corresponding to proximity to the first marked point.  As we have seen above, the $\delta_i$ give a divided tropical line over $S$.

We can also extract a line bundle from $P$ by taking the normal bundle at the first marked point.  Together, these give a map:
\begin{equation} \label{eqn:4}
\Mss_{0,2} \to \mathscr T \times \BGm.
\end{equation}

Conversely, if $\mathcal P_\gamma$ is a divided tropical line over $S$ then $P_\gamma$ is a family of $2$-marked, genus~$0$ semistable curves by Proposition~\ref{prop:chain}.  Twisting by $L$, the family $L \otimes P_\gamma$ is also a chain of rational curves.  The map that sends $(\mathcal P_\gamma, L)$ to $L \otimes P_\gamma$ is inverse to~\eqref{eqn:4}.

We note that we could have used the normal bundle at the second marked point in the discussion above and obtained a second identification between $\Mss_{0,2}$ and $\mathscr T \times \BGm$.  These identifications can be related by a formula involving the $\delta_i$.

\subsection{Rubber divisors}
\label{sec:rub-div}

Suppose that $C$ is a logarithmic curve over $S$ and that $\alpha : C \to \mathcal P$ is a logarithmic divisor on $C$.  For each irreducible component $v$ of each geometric fiber of $C$ over $S$, the restriction of $\alpha$ to $v$ determines an element $\alpha(v) \in \overnorm M_S^{\rm gp}$, well-defined up to simultaneous shift of all $\alpha(v)$.  As $\overnorm M_S^{\rm gp}$ is partially ordered (with $\overnorm M_S$ being the cone of elements $\geq 0$), this gives a partial order on the vertices of the dual graph of each fiber of $C$ over $S$.  This partial order is compatible with generization in the evident sense: whenever $v$ and $w$ are irreducible components of $C_s$, generizing to irreducible components $v'$ and $w'$ of $C_{s'}$, then $\alpha(v) \leq \alpha(w)$ implies $\alpha(v') \leq \alpha(w')$.

\begin{definition}
    Let $\Rub_{g,n}$ be the subfunctor of $\Div_{g,n}$ parameterizing those pairs $(\pi:C\rightarrow S, \alpha)$ such that the values taken by $\alpha$ on the vertices of the dual graph of each geometric fiber of $C$ over $S$ are totally ordered in the partial order of $\overnorm M_S^{\rm gp}$.  We call these logarithmic divisors \emph{linearly aligned}.
\end{definition}

\begin{warning}
    Although $\Rub_{g,n}$ is a subset of $\Div_{g,n}$ when viewed as a functor on logarithmic schemes, that does not mean the underlying scheme of $\Rub_{g,n}$ is a subscheme of the underlying scheme of $\Div_{g,n}$.  Far from it:  we will see shortly that the underlying algebraic stack of $\Rub_{g,n}$ is a birational modification of the underlying algebraic stack of $\Div_{g,n}$.
\end{warning}

The following proposition gives a slightly simpler description of $\Rub_{g,n}$.  Notably, it permits us to study points of $\Rub_{g,n}$ as sections of $\pi_\ast(\overnorm M_C^{\rm gp})$ with a condition, instead of as sections of the quotient $\pi_\ast(\overnorm M_C^{\rm gp}) / \overnorm M_S^{\rm gp}$.

\begin{proposition} \label{prop:can-alpha}
Let $\Rub'_{g,n}$ be the stack of tuples $(C,\alpha)$ with ordered marked points, where $C$ is a logarithmic curve over $S$ and $\alpha$ is a global section of $\overnorm M_C^{\rm gp}$, such that, for each geometric point $s$ of $S$, the values taken by $\alpha$ on the vertices of the dual graph of $C$ are totally ordered in $\overline M_{S,s}^{\rm gp}$ and the minimal value taken is $0$.  Then the natural map $\Rub'_{g,n} \to \Rub_{g,n}$ is an isomorphism.
\end{proposition}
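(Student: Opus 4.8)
The plan is to verify the statement strict-\'etale-locally on $S$, write down an explicit inverse to the natural map, and check that the two are mutually inverse. The natural map sends a global section $\alpha$ of $\overnorm M_C^{\rm gp}$ to its class in $\pi_\ast(\overnorm M_C^{\rm gp})/\overnorm M_S^{\rm gp}$; since the partial order induced on the vertices of a geometric fiber depends only on the differences $\alpha(v) - \alpha(w)$, which are unchanged under translation by a section of $\overnorm M_S^{\rm gp}$, this map is well defined and lands in $\Rub_{g,n}$. Because both sides are stacks on the strict \'etale site and the constructions below will be manifestly canonical (hence descend), I may assume $S$ carries a geometric point $s$ over which the dual graph of $C$ has trivial monodromy, so that the vertices $v$ and their values $\alpha(v)$ make sense as sections over $S$, and that the given class in $\Rub_{g,n}$ lifts to an honest section $\alpha \in \pi_\ast \overnorm M_C^{\rm gp}$.

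The inverse map is the normalization that subtracts the minimum vertex value: given a linearly aligned class, represented locally by $\alpha$, set $m = \min_v \alpha(v)$ and replace $\alpha$ by $\alpha - m$. A common translation of $\alpha$ translates $m$ by the same amount, so $\alpha - m$ depends only on the class and the construction is canonical; its vertex values are then all $\geq 0$ with minimum $0$, which is exactly the condition cutting out $\Rub'_{g,n}$. The crux of the proof, and the step I expect to be the main obstacle, is showing that $m$ is a genuine section of $\overnorm M_S^{\rm gp}$ rather than a merely pointwise-defined value, as the minimizing vertex may change across $S$. This is where totality and the compatibility of the induced order with generization (recorded above, in the discussion of how $\alpha$ induces a partial order on the dual graph) are both essential: totality guarantees that at each point some vertex attains the minimum, while generization-compatibility shows that each relation-locus $\{\,\alpha(v) \leq \alpha(w)\,\}$, being constructible and stable under generization, is open. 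Consequently the loci $Z_i = \{\,\alpha(v_i) \leq \alpha(v_j)\ \text{for all } j\,\}$ on which $v_i$ is minimal are open and cover $S$, and on an overlap $Z_i \cap Z_j$ both $\alpha(v_i)$ and $\alpha(v_j)$ are minimal, hence equal. The locally minimal values therefore glue to a global section $m$ of $\overnorm M_S^{\rm gp}$, and $\alpha - m$ is a global section of $\overnorm M_C^{\rm gp}$ as desired.

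Finally I would verify that the two maps are mutually inverse. Normalizing and then passing to the quotient returns the original class, since $m \in \overnorm M_S^{\rm gp}$; this gives essential surjectivity, and canonicity lets the local lifts glue to a global one. For injectivity, suppose $\alpha_1, \alpha_2 \in \Rub'_{g,n}$ have the same image, so that $\alpha_1 - \alpha_2 = \delta$ for some $\delta \in \overnorm M_S^{\rm gp}$; taking minima over the vertices and using that both minima equal $0$ forces $\delta = 0$ at every geometric point, whence $\delta = 0$ as a section and $\alpha_1 = \alpha_2$. Since the datum $\alpha$ contributes no automorphisms beyond those of the underlying logarithmic curve, which agree on both sides, this equivalence on objects upgrades to an isomorphism of stacks, completing the proof.
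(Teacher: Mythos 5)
Your proof is correct and takes essentially the same route as the paper: the paper's entire proof is the one-sentence observation that total ordering yields a unique representative of the class whose fiberwise minimum is $0$, and your argument supplies exactly the verification (that the fiberwise minimum is a genuine section of $\overnorm M_S^{\rm gp}$, via generization-compatibility of the order) that the paper leaves implicit. The only shaky justification is the asserted constructibility of the relation loci, which is unclear for non-coherent logarithmic structures, but this is harmless: openness of $\{\alpha(v) \leq \alpha(w)\}$ follows directly by lifting the stalk-level membership $(\alpha(w)-\alpha(v))_s \in \overnorm M_{S,s}$ to a section of $\overnorm M_S$ on a neighborhood of $s$ and comparing sections of $\overnorm M_S^{\rm gp}$.
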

\begin{proof}
If $(C,\mathbf x,\alpha)$ is an object of $\Rub_{g,n}(S)$ then by definition, $\alpha$ is a section of $\pi_\ast(\overnorm M_C^{\rm gp}) / \overnorm M_S^{\rm gp}$.  But because the values of $\alpha$ taken on the vertices of the tropicalization of $C$ are totally ordered, there is a unique representative of $\alpha$ in $\Gamma(C, \overnorm M_C^{\rm gp})$ such that, in each fiber of $C$ over $S$, the smallest value it takes on the tropicalization is $0$.  
\end{proof}

\begin{theorem}\label{thm:rublogmod}
    The map $\Rub_{g,n}\rightarrow \Div_{g,n}$ is a logarithmic modification.
\end{theorem}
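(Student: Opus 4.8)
The plan is to reduce to the local toric description already obtained in Section~\ref{sec:aligned} and then read off the three named properties. The alignment condition of Definition~\ref{def:aligned} (the values $\alpha(v)$ totally ordered) is exactly the linear alignment condition defining $\Rub_{g,n}$, so running the argument of the proposition in Section~\ref{sec:aligned} over the component $\Div_{g,n}$ shows at once that $\Rub_{g,n} \to \Div_{g,n}$ is representable and is, strict-\'etale locally on the target, pulled back from a toric modification. Concretely, after base change along $S \to \Div_{g,n}$ classifying a logarithmic curve $C$ with divisor $\alpha$, and after passing to a strict \'etale neighbourhood admitting a strict map $S \to V$ to an affine toric variety, the fibre product $\Rub_{g,n}\mathop\times_{\Div_{g,n}} S$ is pulled back from the subdivision of the cone of $V$ along the finite hyperplane arrangement $\{\alpha(v) = \alpha(w)\}$ indexed by pairs of vertices of the dual graph. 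It then suffices to check that such a subdivision is logarithmically \'etale, birational, and proper.

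Logarithmic \'etaleness and birationality will be the routine steps. Subdivisions of cones are logarithmically \'etale, a standard property of toric modifications, consistent with the logarithmic \'etaleness of $\Div_{g,n} \to \Mlog_{g,n}$ recorded in Section~\ref{sec:logdiv}. For birationality I would observe that over the open locus of $\Div_{g,n}$ where the logarithmic structure is trivial---dense by Corollary~\ref{cor:divbirational}---each fibre is a smooth curve whose dual graph has a single vertex, so the alignment condition is vacuous and the modification is an isomorphism.

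For properness I would avoid the combinatorics of verifying directly that the subdivision has full support, and instead invoke the logarithmic valuative criterion of Section~\ref{sec:valuative}, following the properness step of Proposition~\ref{prop:chain}. Take $S$ to be the spectrum of a valuation ring with the maximal logarithmic structure extending that of its generic point $\eta$, together with a map $S \to \Div_{g,n}$ and a lift to $\Rub_{g,n}$ over $\eta$; the latter is an alignment of the values $\alpha(v) \in \overnorm M_\eta^{\rm gp}$. Because $\Gamma(S, M_S^{\rm gp}) = \Gamma(\eta, M_\eta^{\rm gp})$ by construction of the maximal extension, the underlying section extends uniquely, and the only point to verify is that the generic total order persists at the closed point. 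This is exactly the property used in the properness step of Proposition~\ref{prop:chain}: by the relative valuativeness of the generization map $\overnorm M_S^{\rm gp} \to j_\ast \overnorm M_\eta^{\rm gp}$ (Proposition~\ref{prop:rel-val}), any two values that are comparable in $\overnorm M_\eta^{\rm gp}$ remain comparable in $\overnorm M_S^{\rm gp}$, so the chain lifts and yields the unique extension. Uniqueness of the lift supplies separatedness, while finite presentation and quasicompactness come from the local toric model; together these give properness of the underlying morphism of stacks via the criterion of Section~\ref{sec:valuative}.

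The step I expect to be the genuine obstacle is the bookkeeping that turns these ingredients into a properness statement for the morphism of algebraic \emph{stacks}: one must confirm that the logarithmic valuative criterion is applied relative to $\Div_{g,n}$ (so that $C$ and $\alpha$ are fixed by the map from the valuation ring and only the alignment is sought), that the maximal-extension hypothesis is harmless for this purpose, and that the valuative-criterion description of the modification agrees with the toric-subdivision description from Section~\ref{sec:aligned}. Reconciling these two pictures, and checking that the tie case $\alpha(v) = \alpha(w)$ spoils neither properness nor representability, is where the care is needed.
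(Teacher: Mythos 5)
Your proposal is correct, but it proves properness by a genuinely different route from the paper's proof of Theorem~\ref{thm:rublogmod}. The paper argues constructively that the map is a subdivision: whenever two values $\alpha(v)$, $\alpha(w)$ are incomparable, it lifts them to $M_S^{\rm gp}$, views the pair as a logarithmic map $S \to \mathbf A^2$, and pulls back the toric blowup of $\mathbf A^2$ at the origin; induction removes all incomparabilities, and being a subdivision yields proper, logarithmically \'etale, and birational in one stroke (the paper even remarks that its \'etaleness and birationality paragraphs are technically redundant). You instead stitch together two arguments that appear elsewhere in the paper: the local toric model of the space of alignments as the subdivision of a cone along the hyperplanes $\alpha(v) = \alpha(w)$ (the proposition of Section~\ref{sec:aligned}), and the maximal-extension valuative criterion combined with Proposition~\ref{prop:rel-val}, exactly as in the properness step of Proposition~\ref{prop:chain}. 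This works: uniqueness of lifts is automatic since $\Rub_{g,n}$ is a subfunctor of $\Div_{g,n}$ on logarithmic schemes, and quasicompactness comes from the finiteness of the hyperplane arrangement. Two points deserve to be made explicit in your write-up, though. First, the special fibre's dual graph has more vertices than the generic fibre's; two special vertices collapsing to the same generic vertex have equal, hence trivially comparable, images in $\overnorm M_\eta^{\rm gp}$, and their comparability over $S$ is precisely the kernel case that the definition of relatively valuative literally covers, while vertices with distinct generic images are ordered by the generic alignment---your phrase ``the generic total order persists'' glosses over this dichotomy. Second, the assertion that images comparable in $\overnorm M_\eta^{\rm gp}$ force comparability in $\overnorm M_S$ is stronger than the literal (kernel-only) statement of Proposition~\ref{prop:rel-val}; it is true for maximal extensions, and the paper invokes it in the same loose form inside Proposition~\ref{prop:chain}, but it merits a sentence of justification. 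In exchange for these checks, your route avoids the inductive blowup bookkeeping and reuses machinery already set up in Sections~\ref{sec:valuative} and~\ref{sec:aligned}, whereas the paper's route is self-contained and exhibits $\Rub_{g,n}$ explicitly as an iterated blowup of $\Div_{g,n}$.
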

\begin{proof}
We can represent an $S$-point of $\Div_{g,n}$ locally as a logarithmic curve $C$ over $S$ and a piecewise linear function $\alpha$ on the tropicalization of $C$ taking values in $\overnorm M_S$.  Localizing in $S$, we can lift these elements to $M_S$ so that they define a map to the toric variety $\mathbf A^p$.  There is a minimal subdivision of the toric fan of $\mathbf A^p$ such that the $p$ coordinates are pairwise comparable.  The corresponding toric modification of $\mathbf A^p$ pulls back under $S \to \mathbf A^p$ to $S \mathop\times_{\Div_{g,n}} \Rub_{g,n}$.  Hence, $\Rub_{g,n}$ is locally the base change of a toric modification and is therefore a logarithmic modification, by definition.
\end{proof}

\setcounter{corollary}{\value{theorem}}
\begin{corollary}
The map $\Rub_{g,n} \to \Div_{g,n}$ is proper, logarithmically \'etale, birational, and a logarithmic monomorphism.
\end{corollary}
\begin{proof}
This is immediate from Theorem~\ref{thm:rublogmod} and Proposition~\ref{prop:log-mod}.
\end{proof}

\subsection{Divisions}
\label{sec:div}

Any morphism $S \to \Rub_{g,n}$ induces a divided tropical line on $S$ with at least one division.  Indeed, if $(C, \alpha) \in \Rub_{g,n}(S)$ then locally $\alpha$ can be represented by a section $\Gamma(C, \overnorm M_C^{\rm gp})$.  In fact, by Proposition~\ref{prop:can-alpha}, it is possible to choose $\alpha$ globally.
By definition of $\Rub_{g,n}$, the family of elements $\alpha(v)$, as $v$ varies among vertices of the tropicalizations of the fibers of $C$ over $S$, is totally ordered in each fiber.  Therefore the $\alpha(v)$ determine a divided tropical line $\mathcal P_\gamma$ over $S$.  Note that although the $\alpha(v)$ are only well-defined up to addition of a constant from $\overline M_S^{\rm gp}$, the divided tropical line $\mathcal P_\gamma$ is independent of this ambiguity.

The map $C \to \tropGm$ classified by $\alpha$ does not necessarily factor through $\mathcal P_\gamma$.  Figure~\ref{fig:division} shows how this can happen at the level of tropical curves, where there is no arrow from the curve in the upper right to the divided tropical line in the lower left.  However, $C \mathop\times_{\tropGm} \mathcal P_\gamma$ is a logarithmic modification of $C$ whose morphism to $\tropGm$ does factor through $\mathcal P_\gamma$, by definition.

\begin{figure}
\begin{tikzpicture}[scale=.7,arrow/.style={decoration={markings, mark=at position .5 with {\arrow{angle 90}}},postaction={decorate}}]

\def\makecoords{
\coordinate (A1) at (-3,0);
\coordinate (B1) at (-1,0);
\coordinate (C1) at (1,0);
\coordinate (D1) at (3,0);

\begin{scope}[shift={(0,2)}]
\coordinate (A2) at (-3,0);
\coordinate (B2) at (-1,0);
\coordinate (C2) at (1,0);
\coordinate (D2) at (3,0);
\end{scope}
}

\begin{scope}[shift={(-5,0)}]
\makecoords

\draw (A1) -- (D1);

\draw[black,fill=black] (B1) circle (.6mm);
\draw[black,fill=black] (C1) circle (.6mm);
\end{scope}

\begin{scope}[shift={(5,0)}]
\makecoords

\draw (A1) -- (D1);

\end{scope}

\begin{scope}[shift={(5,3)}]
\makecoords

\draw (A1) -- (D1);
\draw (A2) -- (D2);
\draw (B1) -- (C2);

\draw[black,fill=black] (B1) circle (.6mm);
\draw[black,fill=black] (C2) circle (.6mm);
\end{scope}

\begin{scope}[shift={(-5,3)}]
\makecoords

\draw (A1) -- (D1);
\draw (A2) -- (D2);
\draw (B1) -- (C2);

\draw[black,fill=black] (B1) circle (.6mm);
\draw[black,fill=black] (C2) circle (.6mm);
\draw[black,fill=black] (B2) circle (.6mm);
\draw[black,fill=black] (C1) circle (.6mm);

\end{scope}

\draw[->] (-1,0) -- (1,0);
\draw[->] (-5,2) -- (-5,1);
\draw[->] (-1,4) -- (1,4);
\draw[->] (5,2) -- (5,1);

%\begin{scope}[shift={(-5,0)}] 
%\draw (A) -- (B);
%\draw (B) -- (C);
%\draw (C) -- (D);
%\draw[black,fill=black] (B) circle (.6mm);
%\draw[black,fill=black] (C) circle (.6mm);
%\end{scope}

\end{tikzpicture}

\caption{The tropicalization of a rubber morphism on the right side, with the induced subdivision of the tropical line in the lower left and the induced subdivision of the domain curve in the upper left}
\label{fig:division}
\end{figure}

\subsection{Stable maps to rubber targets}
\label{sec:rubber-maps}

\begin{definition} \label{def:rubCL}
Let $\pi : C \to S$ be family of logarithmic curves, and $L$ a line bundle on $C$.  We define $\Rub_{g,\mathbf a}(C,L)$ to be the moduli space of logarithmic trivializations (Definition~\ref{def:logtriv}) of $L$ over $C$ whose image in $\Div_{g,\mathbf a}$ lies in $\Rub_{g,\mathbf a}$.
\end{definition}

By definition, there is a cartesian diagram:
\begin{equation*} \xymatrix{
    \Rub_{g,\mathbf a}(C,L) \ar[r] \ar[d] & \Div_{g,\mathbf a}(C,L) \ar[r] \ar[d] & S \ar[d]^{(C,L)} \\
    \Rub_{g,\mathbf a} \ar[r] & \Div_{g,\mathbf a} \ar[r] & \Pic_{g,n}.
} \end{equation*}

%The object of this section is to connect the space $\Rub(C,L)$ to the moduli spaces of rubber stable maps studied by Graber and Vakil~\cite{GV}.  

Recall that, by Definition~\ref{def:logdiv}, an $S$-point of $\Rub_{g,\mathbf a} \subset \Div_{g,\mathbf a}$ is a logarithmic curve $C$ over $S$, a tropical line bundle $\mathcal P$ over $S$, and a map $\alpha : C \to \mathcal P$ over $S$.  The factorization through $\Rub_{g,\mathbf a}$ induces a division $\mathcal Q$ of $\mathcal P$, as in Section~\ref{sec:div}, and a logarithmic modification $\tilde C = C \mathop\times_{\mathcal P} \mathcal Q$ of $C$.

Let us suppose that $C$ is a \emph{stable} curve.  Locally on $S$, the map $\Rub_{g,\mathbf a} \to \Pic_{g,n}$ gives us a line bundle $L = \mathcal O_C(\alpha)$ on $C$, well-defined up to tensor product by a line bundle pulled back from $S$.  We write $\tilde L = \mathcal O_{\tilde C}(\alpha)$ for its pullback to $\tilde C$ and $\tilde L^\ast = \mathcal O_{\tilde C}^\ast(\alpha)$ for its associated $\Gm$-torsor.
Also locally on $S$ --- or even globally, by Proposition~\ref{prop:can-alpha} --- we can choose an isomorphism $\mathcal P \simeq \tropGm$.  There is then a $\Gm$-torsor $\logGm \to \tropGm$ and, writing $P$ for this torsor, we can identify $L^\ast = C \mathop\times_{\mathcal P} P$ and $\tilde L^\ast = \tilde C \mathop\times_{\mathcal P} P$, by Proposition~\ref{prop:univ-torsor}.  We therefore obtain a $\Gm$-equivariant map $\tilde L^\ast \to P$.
The map $\tilde C \to \mathcal P$ factors through $\mathcal Q$ by construction of $\tilde C$, so the map $\tilde L^\ast \to P$ factors through $Q = \mathcal Q \mathop\times_{\mathcal P} P$.  We now have a canonical map $\tilde L^\ast \to Q$ over $S$. 

Let $R$ be the space of $\Gm$-equivariant maps from $L^\ast$ to $Q$, viewed as a family over $C$, and let $\tilde R$ be its pullback to $\tilde C$.  Perhaps more explicitly, $R = \smash{L^\ast} ^{-1} \mathop\otimes Q$.  Then $R$ is a family of $2$-marked, genus~$0$, semistable curves over $C$ and is equipped with a section $\sigma$ over $\tilde C$, induced from the $\Gm$-equivariant map $\tilde L^\ast \to Q$ constructed above.

Taking stock, we have shown that $(C,\alpha) \in \Rub(S)$ gives rise to the following data:
\begin{enumerate}[label=\textbf{R\arabic{*}}]
%\item \label{r1} a logarithmic modification $\tau : \tilde C \to C$,
\item \label{r1} a semistable logarithmic curve $\tilde C$ over $S$ with stabilization $\tau : \tilde C \to C$,
\item \label{r3} a family of $2$-marked, genus~$0$, semistable curves $R$ over $C$,% with quotient $\mathcal R$ by the canonical action of $\Gm$ \cite[Appendix~A]{ACFW},
\item \label{r2} a divided tropical line $\mathcal Q$ over $S$ with $[R/\Gm] = \mathcal Q \mathop\times_S C$, 
\item \label{r4} a section $\sigma$ of $R$ over $\tilde C$ such that no component of any fiber of $\tilde C$ is carried by $\sigma$ into a node of $\mathcal Q$, and
\item \label{r5} for each component $v$ of $\mathcal Q$, the preimage in $\tilde C$ contains at least one component that is stable.
\end{enumerate}
%Moreover, if $C$ is stable then it can be recovered from $\tilde C$ as the stabilization.  Also, $L$ can be recovered as $\tau_\ast \tilde L$, as long as we assume $\tilde L$ is trivial on the strictly semistable components of $\tilde C$.  This gives an equivalent characterization of $\Rub(C,L)$:
The last condition, \ref{r5}, comes from the construction of $\mathcal Q$ and the stability of $C$:  the divisions of $\mathcal Q$ occur exactly at the images of components of $C$.  We refer to this condition as the \emph{stability} of $\sigma$.

\numberwithin{corollary}{theorem}
\begin{proposition} \label{prop:rub-stab}
Let $\Rub_{g,\mathbf a}^{\stab}$ be the locus of $(C,\alpha)$ in $\Rub_{g,\mathbf a}$ such that $C$ is stable.  Then $\Rub_{g,\mathbf a}^{\stab}$ is equivalent to the stack of quadruples $(\tilde C, \mathcal Q, R, \sigma)$ as in~\ref{r1}--\ref{r5}.
\end{proposition}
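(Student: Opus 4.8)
The plan is to produce a quasi-inverse to the construction of the preceding paragraphs, which already assigns to each $(C,\alpha)\in\Rub_{g,\mathbf a}^{\stab}(S)$ a tuple $(\tilde C,\mathcal Q,R,\sigma)$ satisfying \ref{r1}--\ref{r5}. First I would record that this assignment, call it $F$, is a morphism of stacks: each ingredient---the division $\mathcal Q$ of $\mathcal P$ produced in Section~\ref{sec:div}, the logarithmic modification $\tilde C = C\times_{\mathcal P}\mathcal Q$, the twisted family $R=(L^\ast)^{-1}\otimes Q$, and the section $\sigma$ arising from the $\Gm$-equivariant map $\tilde L^\ast\to Q$---is built from fiber products, $\Gm$-torsors, and pullbacks of logarithmic modifications, all of which commute with base change. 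One should note in passing why the image lands in the locus cut out by \ref{r1}--\ref{r5}: stability of $C$ forces each component of $\mathcal Q$ to carry a stable component of $\tilde C$ (condition \ref{r5}), while the fact that $\alpha$ has a well-defined value at every vertex, rather than jumping at a division, is exactly the requirement that $\sigma$ avoid the nodes of $\mathcal Q$ (condition \ref{r4}).

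Next I would construct the inverse functor $G$. Given a tuple $(\tilde C,\mathcal Q,R,\sigma)$, the curve $C$ is recovered as the stabilization target of $\tau$ in \ref{r1}, and the divided tropical line $\mathcal Q$ comes equipped with its ambient tropical line $\mathcal P$. The identification $[R/\Gm]=\mathcal Q\times_S C$ of \ref{r3} presents $R$, via Proposition~\ref{prop:chain} and Theorem~\ref{thm:T}, as a chain of rational curves over $C$ obtained by twisting the logarithmic line $Q$ associated to $\mathcal Q$ by a line bundle $L$ on $C$, so that $R=(L^\ast)^{-1}\otimes Q$. Projecting $\sigma$ along $R\to[R/\Gm]=\mathcal Q\times_S C\to\mathcal Q\hookrightarrow\mathcal P$ yields a map $\tilde\alpha:\tilde C\to\mathcal P$ factoring through $\mathcal Q$. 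The essential content of $G$ is that $\tilde\alpha$ descends through the stabilization $\tau$ to a logarithmic divisor $\alpha:C\to\mathcal P$: the components of $\tilde C$ contracted by $\tau$ are precisely the rational bridges sitting over the divisions of $\mathcal Q$, and conditions \ref{r4} and \ref{r5} guarantee both that $\tilde\alpha$ is linear across these bridges with a well-defined vertex value on every uncontracted component, and that every division of $\mathcal Q$ is realized, so that the contraction is unambiguous and recovers all of $C$. The resulting $\alpha$ is linearly aligned because its vertex values are the totally ordered sections recorded by $\mathcal Q$; hence $(C,\alpha)\in\Rub_{g,\mathbf a}^{\stab}(S)$ by Proposition~\ref{prop:can-alpha}.

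Then I would verify that $F$ and $G$ are mutually inverse. Both composites unwind to the identity using the formula $R=(L^\ast)^{-1}\otimes Q$ together with the correspondence of Proposition~\ref{prop:univ-torsor} between sections $\sigma$ of $R$ over $\tilde C$ and $\Gm$-equivariant maps $\tilde L^\ast\to Q$: starting from $(C,\alpha)$, reconstructing $\tilde\alpha$ and contracting returns $\alpha$, while starting from a tuple, forming $\mathcal O_C(\alpha)$ and twisting $Q$ returns $R$ and $\sigma$ up to canonical isomorphism. For full faithfulness I would check that an isomorphism of tuples induces, and is induced by, an isomorphism of the corresponding $(C,\alpha)$; the only delicate point is that the $\Gm$ appearing in the splitting $\Mss_{0,2}\simeq\mathscr T\times\BGm$ of Theorem~\ref{thm:T} does not contribute spurious automorphisms, which holds because the section $\sigma$ rigidifies the $\Gm$-action on $R$.

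The main obstacle will be the descent step inside the construction of $G$, and specifically the verification that the combinatorial conditions \ref{r4} and \ref{r5} correspond \emph{exactly} to the descent of $\tilde\alpha$ to a logarithmic divisor on the \emph{stable} curve $C$ rather than merely on the semistable model $\tilde C$. In other words, the heart of the matter is showing that the pair $(\tilde C,\sigma)$ carries precisely the information of the linearly aligned divisor $\alpha$ on $C$---no component data is lost in the stabilization $\tau$ and none is spuriously introduced by the modification---so that $\tilde C=C\times_{\mathcal P}\mathcal Q$ is reconstructed canonically from the tuple.
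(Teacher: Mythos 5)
Your overall architecture is the same as the paper's: the forward construction is already in place, and the substance is an inverse functor built by descending the induced data along the stabilization $\tau : \tilde C \to C$. The gap is in how you carry out that descent. You first project $\sigma$ down to a tropical map $\tilde\alpha : \tilde C \to \mathcal Q \subset \mathcal P$ and then assert that conditions~\ref{r4} and~\ref{r5} guarantee that $\tilde\alpha$ is linear across the contracted bridges, hence descends to $C$. That implication is not valid. Conditions~\ref{r4} and~\ref{r5} only constrain \emph{which} strata of $\mathcal Q$ the components of $\tilde C$ hit, and which components of $\mathcal Q$ are occupied by stable components; they say nothing about the \emph{slopes} of $\tilde\alpha$ on the two edges adjacent to a contracted bivalent vertex. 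Combinatorially one can have a contracted vertex $v$ at a division value $\gamma_k$ whose two adjacent slopes differ: for instance with $\overnorm M_S = \mathbf N \ell_1 + \mathbf N \ell_2$, neighbors $u, w$ of $v$ at values $\gamma_k - 2\ell_1$ and $\gamma_k + 3\ell_2$ along edges of lengths $\ell_1$ and $\ell_2$, together with some stable vertex elsewhere at value $\gamma_k$ so that~\ref{r5} holds. This piecewise linear function satisfies all of the combinatorial requirements of~\ref{r1}--\ref{r5}, yet it has a kink at $v$ and therefore does not descend, because a section of $\overnorm M_C^{\rm gp}$ must be linear on each edge of the tropicalization of $C$. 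What actually rules out the kink is precisely the datum you discarded when passing from $\sigma$ to $\tilde\alpha$: $\sigma$ is a section of $R$, the $\Gm$-torsor--twisted family, not merely of $[R/\Gm] = \mathcal Q \mathop\times_S C$. On a contracted component the torsor $L^\ast$ is pulled back from a point of $C$, hence trivial, so $\sigma$ restricts there to an honest map $\mathbf P^1 \to \mathbf P^1$ meeting $0$ and $\infty$ only at the two nodes; equality of the two contact orders (equivalently, vanishing of the degree of $\mathcal O_{\tilde C}(\tilde\alpha)$ on every contracted component) is what forces the slopes to match. This is the predeformability condition, and it is schematic, not combinatorial, in origin.

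This is exactly why the paper performs the descent at the logarithmic rather than the tropical level: it forms the sheaf $R'$ of $\Gm$-equivariant maps $R \to \logGm$, which is locally isomorphic to $\logGm$ wherever $L^\ast$ is trivialized, so that sections of $R'$ correspond locally to sections of $M^{\rm gp}$, and then invokes the isomorphism $M_C^{\rm gp} \to \tau_\ast M_{\tilde C}^{\rm gp}$, valid because $\tau$ contracts rational components (\cite[Lemma~8.8]{CCUW}). Retaining the torsor information is what makes the descent both true and provable, and it also yields the statement in families, whereas your fiberwise piecewise-linear argument would at best treat geometric points. To repair your proof, do not project to $\mathcal Q$ before descending: use $\sigma$ and local trivializations of $L^\ast$ to produce local lifts of $\tilde\alpha$ to $M_{\tilde C}^{\rm gp}$, descend these along $\tau$ using the isomorphism above, and only then pass to $\overnorm M_C^{\rm gp}$ to obtain $\alpha$. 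Your remarks on functoriality, on full faithfulness, and on $\sigma$ rigidifying the $\Gm$-action are fine, but they rest on this descent step, which as written does not go through.
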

\begin{proof}
We have seen already how $\tilde C$, $\mathcal Q$, $R$, and $\sigma$ arise from $(C,\alpha)$ in $\Rub_{g,\mathbf a}$.  It remains to argue that every datum $(\tilde C, \mathcal Q, R, \sigma)$ satisfying the conditions arises uniquely by this process. 

Let $\tau : \tilde C \to C$ be the stabilization.  Since $\mathcal Q$ is a divided tropical line, it has a canonical map (in fact, two canonical maps, but we choose the convention in Proposition~\ref{prop:can-alpha}) to $\tropGm$.
The composition 
\begin{equation} \label{eqn:5}
\tilde C \to R \to \mathcal Q \to \tropGm
\end{equation}
gives us an object of $\Rub'_{g,\mathbf a}(\tilde C)$.  We wish to show it descends to $C$.

We take $Q$ to be the pullback of $\logGm$ to $\mathcal Q$ via the map $\mathcal Q \to \tropGm$ introduced above.  Since $\mathcal R = \mathcal Q \mathop\times_S C$, Theorem~\ref{thm:T} guarantees that we can recover $R$ as the space of $\Gm$-equivariant maps from a $\Gm$-torsor $L^\ast$ on $C$ to $Q$.
Consider the sheaf $R'$ of $\Gm$-equivariant maps $R \to \logGm$.  We will show that the map
\begin{equation*}
\tilde C \to R \to R'
\end{equation*}
factors through $C$.  Since the map~\eqref{eqn:5} factors through $R'$ this will give the desired factorization and complete the proof.

We show that
\begin{equation*}
R \to \tau_\ast \tau^\ast R
\end{equation*}
is an isomorphism.  This is a local assertion in $C$. Near any point $x$ of $C$, we can find an open neighborhood $U$ where $L^\ast \simeq \Gm$ and therefore where $R' \simeq \logGm$.  That is, sections of $R$ correspond to sections of $M_C^{\rm gp}$ and sections of $\tau^\ast R$ correspond to sections of $M_{\tilde C}^{\rm gp}$.  But the map
\begin{equation*}
M_C^{\rm gp} \to \tau_\ast M_{\tilde C}^{\rm gp}
\end{equation*}
is an isomorphism, because $\tau$ contracts rational components (see \cite[Appendix~B]{MR3329675} or \cite[Lemma~8.8]{CCUW}).  This completes the proof.
\end{proof}

The case where an isomorphism between $L$ and $\mathcal O_C$ is fixed is of particular interest, as this was the situation considered in~\cite{GV}.  Here the data simplify to
\begin{enumerate}[label=\textbf{R\arabic{*}$'$}]
\item \label{rp1} a semistable logarithmic curve $\tilde C$ with stabilization $\tau : \tilde C \to C$,
\item a family of $2$-marked, genus~$0$, semistable curves $Q$ over $S$, 
\item a morphism $\sigma : \tilde C \to Q$ over $S$ that does not carry any component of $\tilde C$ to a node, and
\item \label{rp4} every component of every fiber of $Q$ is covered by at least one stable component of $\tilde C$.
\end{enumerate}

\noindent As a result of the above discussion and as special cases of Proposition~\ref{prop:rub-stab} we have the following corollaries:

\setcounter{corollary}{\value{equation}}
\begin{corollary} \label{cor:rubber}
Let $\Rub(\mathcal M_{g,n}^{\log}, \mathcal O)$ be the substack of $\Rub$ parameterizing those $(C,\alpha)$ where $C$ is stable and $\mathcal O_C(\alpha)$ is pulled back from the base.  Then $\Rub(\mathcal M_{g,n}^{\log}, \mathcal O)$ is isomorphic to the moduli space of data~\ref{rp1}--\ref{rp4}.
\end{corollary}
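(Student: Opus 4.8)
The plan is to read off the statement as the special case of Proposition~\ref{prop:rub-stab} in which the line bundle is trivial, so that almost all of the work has already been carried out in the construction preceding that proposition. Recall from there that, starting from $(C,\alpha)\in\Rub_{g,\mathbf a}^{\stab}$ and a choice of isomorphism $\mathcal P\simeq\tropGm$, one forms the $\Gm$-torsor $L^\ast = C\mathop\times_{\mathcal P}P$ attached to $L=\mathcal O_C(\alpha)$, the family $Q=\mathcal Q\mathop\times_{\mathcal P}P$ over $S$, and the family $R=(L^\ast)^{-1}\otimes Q$ over $C$, together with the section $\sigma$ of $R$ over $\tilde C$. Since Proposition~\ref{prop:rub-stab} identifies $\Rub_{g,\mathbf a}^{\stab}$ with the stack of tuples $(\tilde C,\mathcal Q,R,\sigma)$ satisfying \ref{r1}--\ref{r5}, I would simply trace what \ref{r1}--\ref{r5} become once the defining condition of $\Rub(\mathcal M_{g,n}^{\log},\mathcal O)$ is imposed.

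The key step is the translation of that defining condition. To say that $\mathcal O_C(\alpha)$ is pulled back from $S$ is to say that $L^\ast$ is pulled back from $S$, the freedom to tensor by a base line bundle having been used to make it so. Contracting this torsor, now pulled back from $S$, with $Q$ shows that $R=(L^\ast)^{-1}\otimes Q$ is itself pulled back from $S$; that is, $R\simeq Q\mathop\times_S C$ for a family $Q$ of $2$-marked, genus~$0$, semistable curves over $S$. Consequently the two data \ref{r3} and \ref{r2}, which a priori live over $C$, are both encoded in the single family $Q$ over $S$, since $\mathcal Q=[Q/\Gm]$ is recovered as in Theorem~\ref{thm:T}; this is the family $Q$ required in \ref{rp1}--\ref{rp4}. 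With $R\simeq Q\mathop\times_S C$, a section of $R$ over $\tilde C$ as in \ref{r4} is exactly a morphism $\sigma:\tilde C\to Q$ over $S$, and the requirement that $\sigma$ avoid the nodes of $\mathcal Q$ becomes the requirement that it avoid the nodes of $Q$; the stability condition \ref{r5} passes verbatim to \ref{rp4}. Running the construction in reverse, starting from data \ref{rp1}--\ref{rp4} with $L=\mathcal O_C$ and $R=Q\mathop\times_S C$, produces a tuple satisfying \ref{r1}--\ref{r5} whose associated bundle $\mathcal O_C(\alpha)$ is trivial.

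The main obstacle is not any single computation but the bookkeeping needed to promote this matching of objects into an equivalence of stacks. The freedom to tensor $L$ by a line bundle from the base corresponds exactly to the $\Gm$-scaling of the target $Q$ and its section $\sigma$, so I would need to check that the descent $R\simeq Q\mathop\times_S C$ is canonical modulo this action and compatible with the rigidification of the $\Gm$-automorphism discussed around Theorem~\ref{thm:T}. Once one verifies that the correspondence is functorial in $S$ and that the two constructions are mutually inverse up to this $\Gm$-ambiguity, the asserted isomorphism of stacks follows.
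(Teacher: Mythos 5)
Your proposal is correct and follows exactly the paper's route: the paper derives Corollary~\ref{cor:rubber} as the special case of Proposition~\ref{prop:rub-stab} in which the class of $L = \mathcal O_C(\alpha)$ is trivial, so that $R = (L^\ast)^{-1} \otimes Q$ descends to a family $Q$ over $S$ and the data \ref{r1}--\ref{r5} collapse to \ref{rp1}--\ref{rp4}, just as you describe. Your closing remarks about the $\Gm$-ambiguity and functoriality in $S$ are the same bookkeeping the paper leaves implicit in the phrase ``as a result of the above discussion.''
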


\begin{corollary}\label{cor:rubberfromtropics}
$\Rub(\mathcal M_{g,n}^{\log}, \mathcal O)$ is isomorphic the moduli space of rubber stable maps.
\end{corollary}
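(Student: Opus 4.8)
The plan is to deduce the statement directly from Corollary~\ref{cor:rubber}, which has already identified $\Rub(\mathcal M_{g,n}^{\log}, \mathcal O)$ with the moduli stack of data~\ref{rp1}--\ref{rp4}. The remaining content is not to construct anything new but to recognize this data as precisely the rubber stable maps of Graber and Vakil~\cite{GV}, and to check that the recognition is an equivalence of stacks. First I would recall the definition of a rubber stable map: a prestable source curve of genus $g$ with $n$ markings, an \emph{expanded} unparametrized target obtained by sprouting a chain of $\mathbf P^1$'s, a predeformable map from the source to this expansion, and a rigidity condition ensuring that each level of the target is nontrivially covered. I would also note that stabilizing the source gives a morphism to $\overline{\mathcal M}_{g,n}$, matching the structure of $\Rub(\mathcal M_{g,n}^{\log}, \mathcal O)$ over $\mathcal M_{g,n}^{\log}$.

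Next I would match the pieces. The family $Q$ of $2$-marked, genus~$0$, semistable curves in~\ref{rp1}--\ref{rp4} is exactly the expanded rubber target: a chain of $\mathbf P^1$'s in which the two markings play the roles of $0$ and $\infty$. Because $Q$ is remembered only as an abstract $2$-marked curve, its $\Gm$-scaling automorphisms along each level survive, so the target is automatically \emph{unparametrized} --- this is the ``rubber'' quotient, consistent with the splitting $\Mss_{0,2} \simeq \mathscr T \times \BGm$ of Theorem~\ref{thm:T}, whose $\BGm$ factor records precisely the rubber scaling. The semistable source $\tilde C$ together with its stabilization $\tau : \tilde C \to C$ supplies the prestable source curve, the components contracted by $\tau$ being exactly the unstable bubbles that Li's expansions~\cite{Li1,Li2} require in the source; and $\sigma : \tilde C \to Q$ is the map to the target.

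It then remains to check that the defining conditions agree. I would verify that the explicit requirement in~\ref{rp1}--\ref{rp4} that $\sigma$ carry no component of $\tilde C$ into a node of $Q$ is the nondegeneracy half of Li's \emph{predeformability}; the other half, the matching of the contact orders of the two branches at each node of $\tilde C$ lying over a node of $Q$, is supplied automatically by the logarithmic structure, since these contact orders are the edge slopes recorded in the characteristic monoids. Together they constitute predeformability. Finally I would match the covering condition~\ref{rp4}, that every component of every fiber of $Q$ is covered by a stable component of $\tilde C$, with the rigidity/stability condition of~\cite{GV}, which is exactly what rules out the otherwise positive-dimensional reparametrization automorphisms of the rubber target and makes the moduli a Deligne--Mumford stack.

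The main obstacle will be this last verification, carried out at the level of stacks rather than of geometric points: I must show the comparison is fully faithful and essentially surjective on arbitrary logarithmic families, which amounts to confirming that the $\Gm$-scaling automorphisms of the rubber target, the logarithmic form of predeformability, and the stability condition are compatibly accounted for on both sides, and that isomorphisms of the data~\ref{rp1}--\ref{rp4} correspond bijectively to isomorphisms of rubber stable maps. Once this dictionary is checked to be an equivalence of categories fibered in groupoids, the isomorphism of moduli stacks --- and hence the corollary --- follows immediately.
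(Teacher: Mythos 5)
Your proposal follows essentially the same route as the paper: the paper also deduces this corollary directly from Corollary~\ref{cor:rubber} (a special case of Proposition~\ref{prop:rub-stab}), treating the data~\ref{rp1}--\ref{rp4} as visibly identical to Graber--Vakil's rubber stable maps. The dictionary you spell out --- the chain $Q$ as the unparametrized expanded target, logarithmic structure supplying the contact-order matching in predeformability, and the covering condition~\ref{rp4} as stability --- is exactly what the paper leaves implicit, so your write-up is a correct and somewhat more detailed version of the paper's own argument.
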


\subsection{The rubber virtual fundamental class}
\label{sec:rubber-vfc}

Graber and Vakil gave a pointwise first-order deformation space and a $2$-step obstruction space for $\Rub_{g,\mathbf a}(\mathcal M_{g,n}^{\log}, \mathcal O)$ over $\Mlog_{g,n} \times \mathfrak M_{0,2}^{\rm ss}$~\cite{GV}.\footnote{Op.\ cit.\ asserts that this is an obstruction theory relative to $\frak M_{g,n}^{\log} \times \mathscr T$, but this appears to have been a misstatement.}  In this section, we will describe several global complexes that control the deformation and obstructions of $\Rub_{g,\mathbf a}(\mathcal M_{g,n}^{\log}, \mathcal \omega^{\otimes k})$ relative to different bases.  We relate our obstruction complexes to Graber and Vakil's deformation and obstruction spaces in Section~\ref{sec:gv-comparison}.

\begin{theorem} \label{thm:vfc-text}
The obstruction theories for $\Rub_{g,\mathbf a}(\mathcal M_{g,n}^{\log}, \mathcal \omega^k)$ described in Sections~\ref{sec:def}, \ref{sec:second-obs}, and \ref{sec:gv-obs} all give the same virtual fundamental class.
\end{theorem}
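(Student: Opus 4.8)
The plan is to exhibit all three obstruction theories as relative obstruction theories over a single tower of (logarithmically) smooth base morphisms, and then to deduce the equality of the three virtual classes from the compatibility of virtual pullbacks along such a tower. The organizing principle is the virtual pullback formalism (in the sense of Manolache): if $\Rub_{g,\mathbf a}(\mathcal M_{g,n}^{\log}, \omega^k) \to B \to B'$ is a composite and two of the perfect obstruction theories fit into a distinguished triangle together with the relative cotangent complex $L_{B/B'}$ of a smooth morphism of bases, then the induced virtual pullbacks compose, and the resulting virtual class is unchanged because the extra factor $L_{B/B'}$ is locally free and contributes only an invertible correction. Thus the whole problem reduces to producing the comparison maps between consecutive obstruction theories and checking they are quasi-isomorphisms in the relevant degrees.

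First I would recall that the obstruction theory of Section~\ref{sec:def} is the pullback to $\Rub_{g,\mathbf a}(\mathcal M_{g,n}^{\log}, \omega^k)$ of the relative obstruction theory of $\aj : \Div_{g,\mathbf a} \to \Pic_{g,n}$, whose obstruction bundle has fiber $H^1(C, \mathcal O_C)$ by Theorem~\ref{thm:vfc}; this is an obstruction theory relative to $\Pic_{g,n}$, or, after composing with the strict smooth projection $\Pi$, relative to $\Mlog_{g,n}$. Next, using the identification $\Mss_{0,2} \simeq \mathscr T \times \BGm$ of Theorem~\ref{thm:T} together with the dictionary of Section~\ref{sec:rubber-maps}, I would recognize the Graber--Vakil base $\Mlog_{g,n} \times \Mss_{0,2}$ as recording exactly the data $(C, \mathcal Q, L)$ of a logarithmic curve, a divided tropical line, and the line bundle $L = \omega^k$, and the intermediate base of Section~\ref{sec:second-obs} as recording the curve together with $\mathcal Q$ (equivalently the rubber expansion $R$). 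The forgetful morphisms between these bases, $\Pic_{g,n}$, and $\Mlog_{g,n}$ are then all logarithmically smooth, since they differ only by the locally free deformations of a divided tropical line (a toric datum) and by the $\BGm$-factor.

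The heart of the argument is to build the comparison maps themselves. For the Section~\ref{sec:def} versus Section~\ref{sec:second-obs} comparison, I would produce a morphism of the two deformation complexes covering the map of relative cotangent complexes induced by the smooth base morphism and check it is a quasi-isomorphism on $H^0$ and $H^1$; the key input is that $\Rub \to \Div$ is a logarithmic modification (Theorem~\ref{thm:rublogmod}), so after the toric blow-up realigning the divisor the underlying spaces, and hence the relevant cohomology of the deformation complex, agree. For the Section~\ref{sec:second-obs} versus Section~\ref{sec:gv-obs} comparison, I would match our global complex with Graber and Vakil's pointwise two-step obstruction space~\cite{GV} term by term: the curve deformations contribute $H^1(\tilde C, \mathcal O)$, the target and expansion deformations contribute the locally free node-smoothing and $\Gm$-scaling directions encoded by $\mathcal Q$, and the predeformability condition cuts these down precisely as in~\ref{r4}.

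The step I expect to be the main obstacle is this last comparison: reconciling Graber and Vakil's node-by-node obstruction space, built from gluing data on the expanded rubber target, with the intrinsic $H^1(C, \mathcal O_C)$ that emerges on the Abel--Jacobi side. The indeterminacy computation of Section~\ref{sec:indeterminacy} shows that the gluing parameters deform by the smoothing parameters of the nodes, so matching the two descriptions amounts to checking that the logarithmic-modification coordinates introduced by $\Rub$, namely the lengths $\delta_i$ of the divided tropical line, account \emph{exactly} for Graber and Vakil's target-expansion directions, with no residual discrepancy in either rank or in the map to the cotangent complex. Once these comparison quasi-isomorphisms are established, the equality of the virtual fundamental classes follows formally from the compatibility of virtual pullbacks along the tower of smooth base changes.
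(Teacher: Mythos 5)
Your general toolbox (virtual pullbacks plus explicit comparison maps) is the right one, but the architecture has two genuine gaps. First, the comparison between the theories of Sections~\ref{sec:def} and~\ref{sec:second-obs} is not a tower situation at all: these are obstruction theories for the two projections of the fiber product $\Rub_{g,\mathbf a}(\mathcal M_{g,n}^{\log},\omega^k) = \Rub_{g,\mathbf a} \times_{\Pic_{g,n}} \mathcal M_{g,n}^{\log}$, each pulled back from the opposite side of the cartesian square, so what is needed is \emph{commutativity} of virtual pullbacks in a cartesian square, not functoriality along a composite $X \to B \to B'$. There is no smooth morphism between the two bases $\Rub_{g,\mathbf a}$ and $\mathcal M_{g,n}^{\log}$ compatible with the maps from $X$; accordingly your proposed ``key input'' (Theorem~\ref{thm:rublogmod}) plays no role in this comparison, and the two deformation complexes cannot be quasi-isomorphic, since they are theories relative to different bases. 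Worse, the forgetful morphisms you declare smooth are only \emph{logarithmically} \'etale; their underlying morphisms of algebraic stacks are non-flat birational modifications, so the schematic virtual pullback theorems do not apply to them directly. The device that makes the formal step valid is to pass to Olsson's stacks of logarithmic structures, replacing the logarithmic maps to $\Pic_{g,n}$ and $\mathcal M_{g,n}^{\log}$ by schematic maps to $\Log(\Pic_{g,n})$ and $\Log(\mathcal M_{g,n}^{\log})$, and only then apply \cite[Theorem~4.3]{Manolache}; this step is absent from your proposal, and without it the appeal to the virtual pullback formalism does not go through.

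Second, your ``heart of the argument'' is aimed at the wrong target: matching against Graber--Vakil's pointwise two-step obstruction spaces is the content of the later Section~\ref{sec:gv-comparison} and is not what Theorem~\ref{thm:vfc-text} asserts. What the theorem requires is a comparison of the paper's own third theory (Section~\ref{sec:gv-obs}, relative to $\Mlog_{g,n}\times\mathfrak M_{0,2}^{\rm ss}$, with complex $R\pi_\ast f^\ast T_{(\omega_{C/S}^{-k}\otimes Q)/C}[1]$) with the second. This rests on (i) the canonical trivialization $T_{Q/S}\simeq\mathcal O_Q$ of Lemma~\ref{lem:T-triv}, which identifies that complex with $R\pi_\ast\mathcal O_C[1]$, and (ii) a concrete deformation-theoretic check (Sections~\ref{sec:cons-obs} and~\ref{sec:gv-obs}) that under the map $R\pi_\ast\mathcal O_C[1] \to R^1\pi_\ast\mathcal O_C$ the obstruction to extending the section of $\omega_{C/S}^{-k}\otimes R$ over the square-zero extension is exactly the class of the deformation $\omega_{C/S}^{-k}(\alpha')$, i.e.\ the line-bundle obstruction of Proposition~\ref{prop:second-obs}. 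Your rank-and-filtration matching of curve deformations, node-smoothing, and $\Gm$-directions does not produce this identification of obstruction \emph{classes}, which is the actual mathematical content; agreement of obstruction bundles alone does not imply equality of the virtual fundamental classes.
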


The proof of this theorem is given in Sections~\ref{sec:second-obs}~through~\ref{sec:gv-obs} below.

\numberwithin{theorem}{subsubsection}
\numberwithin{lemma}{subsubsection}
\setcounter{subsubsection}{\value{theorem}}
\numberwithin{equation}{subsubsection}
\subsubsection{A second obstruction theory}
\label{sec:second-obs}

Recall that $\Rub_{g,\mathbf a}(\mathcal M_{g,n}^{\log}, \omega^{\otimes k})$ is constructed from a cartesian diagram~\eqref{eqn:14} where the map $\mathcal M_{g,n}^{\log} \to \Pic_{g,n}$ sends a curve $C$ to $\omega_C^{\otimes k}$:
\begin{equation} \label{eqn:14} \vcenter{ \xymatrix{
	\Rub_{g,\mathbf a}(\mathcal M_{g,n}^{\log}, \omega_C^{\otimes k}) \ar[r] \ar[d] & \mathcal M^{\log}_{g,n} \ar[d]^{\omega^{\otimes k}} \\
	\Rub_{g,\mathbf a} \ar[r] & \Pic_{g,n}.
}} \end{equation}
Section~\ref{sec:def} gives a virtual fundamental class of $\Rub_{g,\mathbf a}(\mathcal M_{g,n}^{\log}, \omega^{\otimes k})$ by way of the canonical relative logarithmic obstruction theory associated to the morphism of logarithmically smooth algebraic stacks $\aj : \Rub_{g,\mathbf a} \to \Pic_{g,n}$.

However, the map $\mathcal M_{g,n}^{\log} \to \Pic_{g,n}$ is a locally closed embedding of smooth stacks, with normal bundle $H^1(C, \mathcal O_C)$ at a curve $C$.  Therefore there is a relative obstruction theory for $\Rub_{g,\mathbf a}(\mathcal M_{g,n}^{\log}, \omega^k)$ over $\Rub_{g,\mathbf a}$ with obstruction bundle $H^1(C, \mathcal O_C)$ giving another virtual fundamental class on $\Rub_{g,\mathbf a}(\mathcal M_{g,n}^{\log}, \omega^k)$ by Gysin pullback from $\Rub_{g,\mathbf a}$. 

We claim that these two virtual fundamental classes coincide.  In order to see this, we must recall that if $f : X \to Y$ is a morphism of logarithmic algebraic stacks with a perfect relative \emph{logarithmic} obstruction theory $E$, then $E$ gives an obstruction theory for the map $g : \undernorm X \to \Log(Y)$ induced from $f$, where $\undernorm X$ is the underlying algebraic stack of $X$.  Indeed, the relative logarithmic cotangent complex of $X \to Y$ is the relative cotangent complex of $\undernorm X \to \Log(Y)$, by definition~\cite[Definition~3.2]{Olsson-LCC}.  If $Y$ is logarithmically regular then $\undernorm Y$ is dense in $\Log(Y)$ (one can see this from the local charts~\cite[Corollary~5.25]{Olsson-Log}) so $\Log(Y)$ is equidimensional if $Y$ is, of the same dimension as $Y$.  In this case, the relative logarithmic virtual fundamental class is defined to be $g^! [\Log(Y)]$.

Since $\mathcal M^{\log}_{g,n} \to \Pic_{g,n}$ is strict, so is $\Rub_{g,\mathbf a}(\mathcal M_{g,n}^{\log}, \omega_C^{\otimes k}) \to \Rub_{g,\mathbf a}$.  We can therefore turn diagram~\eqref{eqn:14} into a cartesian diagram of algebraic stacks \eqref{eqn:21}, ignoring logarithmic structures:
\begin{equation} \label{eqn:21} \vcenter{\xymatrix{
    \undernorm{\Rub}_{g,\mathbf a}(\mathcal M_{g,n}^{\log}, \omega^k) \ar[r]^-p \ar[d]_q & \Log(\mathcal M_{g,n}^{\log}) \ar[d]^r \ar[r] & \overnorm{\mathcal M}_{g,n} \ar[d] \\
    \undernorm{\Rub}_{g,\mathbf a} \ar[r]^-s & \Log(\Pic_{g,n}) \ar[r] & \undernorm{\Pic}_{g,n}.
}} \end{equation}
The underlines denote underlying algebraic stacks (with logarithmic structure forgotten).  The square on the right is cartesian by the strictness of $\overnorm{\mathcal M}_{g,n} \to \undernorm{\Pic}_{g,n}$ and the outer square is cartesian because $\overnorm{\mathcal M}_{g,n} \to \undernorm{\Pic}_{g,n}$ is strict and~\eqref{eqn:14} is cartesian.

By \cite[Theorem~4.3]{Manolache}, we have
\begin{equation*}
q^! [\undernorm{\Rub}_{g,\mathbf a}] = q^! s^! [ \Log(\Pic_{g,n})] = p^! r^! [ \Log(\Pic_{g,n})] = p^! [\Log(\overnorm{\mathcal M}_{g,n})]
\end{equation*}
so the two virtual fundamental classes coincide. 

\subsubsection{The source of the obstruction}
\label{sec:cons-obs}

In Section~\ref{sec:gv-obs}, we will describe still another obstruction theory for $\Rub_{g,n}(\mathcal M_{g,n}^{\log}, \omega^k)$.  In order to compare that one to the obstruction theories just introduced, we will need to understand how obstructions in $H^1(C, \mathcal O_C)$ arise in practice in the obstruction theory introduced in Section~\ref{sec:second-obs}.  Consider a square-zero logarithmic lifting problem~\eqref{eqn:12} where $S'$ is a strict square-zero extension of $S$ with ideal $J$:
\begin{equation} \label{eqn:12} \vcenter{\xymatrix{
	S \ar[r] \ar[d] & \Rub_{g,\mathbf a}(\mathcal M_{g,n}^{\log}, \omega^k) \ar[r] \ar[d] & \mathcal M^{\log}_{g,n} \ar[d] \\
	S' \ar[r] \ar@{-->}[ur] \ar@{-->}[urr] & \Rub_{g,\mathbf a} \ar[r] & \Pic_{g,n}.
}} \end{equation}
It is equivalent to produce either of the dashed arrows, since the square on the right is cartesian.  The map $S \to \mathcal M^{\log}_{g,n}$ gives a logarithmic curve $C$ over $S$, and the map $S' \to \Pic_{g,n}$ gives a logarithmic curve $C'$ over $S'$ extending $C$ and an equivalence class of line bundles $L'$ on $C'$ (equivalence being tensor product by line bundles pulled back from $S'$) whose restriction to $C$ is pulled back from $S$.  If we pick an isomorphism $\psi : \omega_{C/S}^k \simeq L$, then the lifts of $\psi$ to an isomorphism $\psi' : \omega_{C'/S'}^k \simeq L'$ form a torsor under 
\begin{equation*}
\Hom(\omega_{C'/S'}^k, JL') = \Hom(\omega_{C/S}^k, J \otimes \omega^k_{C/S}) = J 
\end{equation*}
so there is an obstruction in $H^1(C, J) = J \otimes H^1(C, \mathcal O_C)$.

More specifically, the map $S' \to \Rub_{g,\mathbf a}$ gives a family of divided tropical lines $\mathcal Q'$ over $S'$ and a map $\alpha : C' \to \mathcal Q'$.  The line bundle $L'$ above, obstructing the lift, is $\mathcal O_{C'}(\alpha)$.  

We note, however, that the isomorphism $\psi$ may only exist locally on $S$.  It is therefore more canonical to choose a line bundle $K$ on $S$ and an isomorphism $\phi : \pi^\ast K \otimes \omega_{C/S}^k \to L$ on $C$.  The pair $(K, \phi)$ is uniquely determined up to unique isomorphism by $L$ (indeed, $K = \pi_\ast \Hom(\omega_{C/S}^k, L)$).  The choices of lifts of $K$ to a line bundle $K'$ on $S'$ are controlled by the complex $\mathcal O_S[1]$.  Once such a lift has been chosen, the lifts of the isomorphism $\phi$ to $\phi' : \pi^\ast K' \otimes \omega_{C'/S'}^k \to L'$ form a torsor under $\Hom(\pi^\ast K \otimes \omega_{C/S}^k, L \otimes J) = J$, by the calculation in the last paragraph, so that we have a map
\begin{equation*}
\mathcal O_S[1] \to \mathrm R \pi_\ast \mathcal O_C[1]
\end{equation*}
whose cone, $\mathrm R^1 \pi_\ast \mathcal O_C$, is the obstruction bundle of a relative obstruction theory for $\Rub_{g,\mathbf a}(\mathcal M_{g,n}^{\log}, \omega^k)$ over $\Rub_{g,\mathbf a}$.  This proves the following proposition:

\setcounter{theorem}{\value{equation}}
\begin{proposition} \label{prop:second-obs}
There is a relative obstruction theory for $\Rub_{g,\mathbf a}(\mathcal M_{g,n}^{\log}, \omega^k)$ over $\Rub_{g,\mathbf a}$ with obstruction bundle $\mathrm R^1 \pi_\ast \mathcal O_C$.
\end{proposition}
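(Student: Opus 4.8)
The plan is to unwind the square-zero deformation theory of the morphism $\Rub_{g,\mathbf a}(\mathcal M_{g,n}^{\log}, \omega^k) \to \Rub_{g,\mathbf a}$ directly. Because the right-hand square in~\eqref{eqn:14} is cartesian, producing a lift in a strict square-zero problem~\eqref{eqn:12} with ideal $J$ is the same datum as lifting $S' \to \Pic_{g,n}$ through $\mathcal M_{g,n}^{\log}$. Since the curve $C'$ over $S'$ and the line bundle $L' = \mathcal O_{C'}(\alpha)$ are already supplied by the map $S' \to \Rub_{g,\mathbf a}$, the lifting problem reduces to producing an isomorphism $L' \simeq \omega_{C'/S'}^k$ in $\Pic_{g,n}$ --- that is, up to tensoring by a line bundle pulled back from $S'$ --- extending the isomorphism already given over $C$. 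I would isolate exactly this trivialization-extension problem and read off its deformation-obstruction groups.

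The key technical step is to make the ``up to a line bundle from the base'' ambiguity canonical, so that no choices enter the complex. First I would set $K = \pi_\ast \Hom(\omega_{C/S}^k, L)$, a line bundle on $S$, together with its tautological isomorphism $\phi : \pi^\ast K \otimes \omega_{C/S}^k \to L$; the pair $(K,\phi)$ is determined up to unique isomorphism by $L$. The lifting problem then splits into two stages: (i) extend $K$ to a line bundle $K'$ on $S'$, a problem governed by the complex $\mathcal O_S[1]$ (with deformations $H^1(S, J\otimes\mathcal O_S)$, obstruction $H^2$, automorphisms $H^0$); and (ii) having fixed $K'$, extend $\phi$ to $\phi' : \pi^\ast K' \otimes \omega_{C'/S'}^k \to L'$. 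Stage (ii) is the extension across the thickening of a trivialization of $L' \otimes (\pi^\ast K' \otimes \omega_{C'/S'}^k)^\vee$; the obstruction lies in $H^1(C, J\otimes\mathcal O_C)$ and, when it vanishes, the lifts form a torsor under $H^0(C, J\otimes\mathcal O_C)$, since the relevant $\Hom$ sheaf is identified with $\mathcal O_C$ via $\phi$.

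Assembling the two stages, I would form the map $\mathcal O_S[1] \to R\pi_\ast \mathcal O_C[1]$ induced by the adjunction unit $\mathcal O_S \to R\pi_\ast \pi^\ast \mathcal O_S = R\pi_\ast \mathcal O_C$, and take its cone as the candidate relative obstruction theory. Using that $\pi$ has connected fibers, so $\pi_\ast \mathcal O_C = \mathcal O_S$, the cone of $\mathcal O_S \to R\pi_\ast \mathcal O_C$ is $R^1\pi_\ast \mathcal O_C[-1]$, whence the cone of the shifted map is $R^1\pi_\ast \mathcal O_C$; this records precisely the stage-(ii) obstruction group $H^1(C,\mathcal O_C)$ once the spurious $H^0$ and $H^2$ contributions of the base line bundle $K$ have been cancelled against $\mathcal O_S[1]$.

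The step I expect to be the main obstacle is verifying that this construction is genuinely functorial --- that the two-stage splitting and the resulting cone are compatible with arbitrary square-zero extensions and with base change --- so that the cone defines a bona fide relative obstruction theory in the sense of Behrend--Fantechi rather than merely computing the obstruction group fibrewise. Getting the shifts and the direction of the map $\mathcal O_S[1] \to R\pi_\ast\mathcal O_C[1]$ exactly right, so that $h^0$ of the cone matches the relative deformations while $h^1$ houses the obstructions, is the delicate bookkeeping underlying this verification.
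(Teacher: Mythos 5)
Your proposal is correct and follows essentially the same route as the paper's own argument in Section~\ref{sec:cons-obs}: the same reduction via the cartesian square in~\eqref{eqn:14}, the same canonical pair $(K,\phi)$ with $K = \pi_\ast \Hom(\omega_{C/S}^k, L)$ to eliminate the ``up to a line bundle from the base'' ambiguity, and the same cone of $\mathcal O_S[1] \to R\pi_\ast \mathcal O_C[1]$ identified with $R^1\pi_\ast \mathcal O_C$. Your explicit computation of that cone via $\pi_\ast \mathcal O_C = \mathcal O_S$ (connected fibers) merely spells out a step the paper leaves implicit.
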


\subsubsection{A third obstruction theory}
\label{sec:gv-obs}

As explained in Section~\ref{sec:rubber-maps}, $\Rub_{g,\mathbf a}(\mathcal M_{g,n}^{\log}, \omega^{k})$ is the space of a logarithmic curve $C$, a family of $2$-marked, semistable, rational curves $Q$, and a stable (logarithmic) section of the twisted family $\omega_{C}^{-k} \otimes Q$ over $C$.  By forgetting the section, there is a \emph{logarithmic} map
\begin{equation} \label{eqn:10}
\Rub_{g,\mathbf a}(\mathcal M_{g,n}^{\log}, \omega^{k}) \to \Mlog_{g,n} \times \mathfrak M_{0,2}^{\rm ss}.
\end{equation}
In the case $k = 0$, Graber and Vakil describe a $2$-step \emph{schematic} --- that is, not logarithmic --- obstruction theory for this map.  We will also give a $2$-step obstruction theory, by thinking of the schematic morphism as a logarithmic morphism (this section) followed by a change of logarithmic structure (Section~\ref{sec:filtration}).  The filtered pieces of this obstruction theory are \emph{not} the same as Graber and Vakil's, although we will see that both theirs and ours have a common refinement.  Ours has the advantage that both steps of the filtration are perfect obstruction theories.

\setcounter{theorem}{\value{equation}}
\begin{proposition} \label{prop:gv-obs}
There is a relative logarithmic obstruction theory for $\Rub_{g,\mathbf a}(\mathcal M_{g,n}^{\log}, \omega^{k})$ over $\Mlog_{g,n} \times \mathfrak M_{0,2}^{\rm ss}$ with obstruction complex $\mathrm R \pi_\ast f^\ast T_{(\omega_{C/S}^{-k} \otimes Q) / C}[1]$.
\end{proposition}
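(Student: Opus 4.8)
The plan is to recognize $\Rub_{g,\mathbf a}(\mathcal M_{g,n}^{\log}, \omega^{k})$, relative to the base $B = \Mlog_{g,n} \times \mathfrak M_{0,2}^{\rm ss}$, as a moduli space of logarithmic sections, and then to invoke the standard deformation theory of sections of a logarithmically smooth morphism. By the description recalled at the start of Section~\ref{sec:gv-obs}, an $S$-point of $\Rub_{g,\mathbf a}(\mathcal M_{g,n}^{\log},\omega^k)$ consists of the logarithmic curve $\pi : C \to S$ and the family $Q$ of $2$-marked semistable rational curves classified by the composite $S \to B$, together with a stable logarithmic section $f$ of the twisted family $Y := \omega_{C/S}^{k}\otimes Q$ over $C$. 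Forgetting $f$ is precisely the morphism~\eqref{eqn:10}, so relative to $B$ the only datum that deforms is the section $f$, while $C$, $Q$, and hence $Y$, are pulled back from the base.

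First I would set up the square-zero lifting problem that defines the relative obstruction theory. Let $S \hookrightarrow S'$ be a strict square-zero extension with ideal $J$, equipped with a map $S' \to B$ extending $S \to B$ and a section $f$ over $S$. The extension $C'$ of $C$ and the target $Y' = \omega_{C'/S'}^{k}\otimes Q'$ are determined by $S' \to B$, so the problem reduces to extending the logarithmic section $f : C \to Y$ to a logarithmic section $f' : C' \to Y'$ over $S'$. Since $Y \to C$ is a family of $2$-marked semistable rational curves obtained by twisting $Q$, it is logarithmically smooth of relative dimension one, so its logarithmic cotangent complex is the line bundle $\Omega_{Y/C}$. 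Standard logarithmic deformation theory for a section of such a morphism (whose local model is the torsor bookkeeping of Section~\ref{sec:cons-obs}) then gives: the obstruction to producing $f'$ lies in $\Hom_C(f^\ast\Omega_{Y/C}, \pi^\ast J)$-degree one, namely $H^1\bigl(C, f^\ast T_{Y/C}\otimes\pi^\ast J\bigr)$, and when it vanishes the lifts form a torsor under $H^0\bigl(C, f^\ast T_{Y/C}\otimes\pi^\ast J\bigr)$.

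Relativizing these fiberwise statements over $S$ amounts to replacing $H^i(C, f^\ast T_{Y/C}\otimes-)$ by the derived pushforward $R\pi_\ast f^\ast T_{Y/C}$, whose shift $R\pi_\ast f^\ast T_{Y/C}[1]$ is the asserted relative obstruction complex: its degree $-1$ cohomology $R^0\pi_\ast f^\ast T_{Y/C}$ records first-order deformations of the section and its degree $0$ cohomology $R^1\pi_\ast f^\ast T_{Y/C}$ receives the obstructions. The required comparison map to the relative logarithmic cotangent complex of~\eqref{eqn:10} is the one provided by functoriality of the logarithmic cotangent complex applied to the universal section. Perfectness is then immediate: because $Y\to C$ has one-dimensional fibers, $f^\ast T_{Y/C}$ is a line bundle on $C$, and $R\pi_\ast$ of a line bundle along the proper flat family of curves $\pi$ is perfect of amplitude $[0,1]$, so $R\pi_\ast f^\ast T_{Y/C}[1]$ is perfect of amplitude $[-1,0]$.

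The main obstacle is bookkeeping in the logarithmic category rather than any genuinely new ingredient. One must confirm that the logarithmic deformation theory of the \emph{section} $f$, and not of the pair (domain, map), is what the complex $T_{Y/C}$ computes; this is exactly where working relative to $B$ is essential, since there the logarithmic structures of $C$ and $Y$ are rigidified and only $f$ varies, and where the local torsor computation of Section~\ref{sec:cons-obs} serves as the model. A secondary point is that the section is constrained by the predeformability and stability conditions~\ref{r4} and~\ref{r5}; as these are open conditions they cut out an open substack on which the obstruction complex is simply the restriction of the one computed above, so they leave the statement unaffected.
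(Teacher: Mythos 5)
Your proposal is correct and follows essentially the same route as the paper: both reduce the lifting problem over a strict square-zero extension to extending the logarithmic section $f$ of the (logarithmically smooth) twisted family $\omega_{C/S}^{k}\otimes Q$ over $C$, observe that local lifts exist and form a torsor under $f^\ast T_{(\omega_{C/S}^{k}\otimes Q)/C}$, and globalize via $R\pi_\ast$ to obtain the stated obstruction complex. Your added remarks on perfectness and on the openness of the stability conditions are details the paper leaves implicit, not a different method.
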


In fact, the following lemma shows that $T_{(\omega_{C/S}^{-k} \otimes Q) / C}$ is canonically isomorphic to $\mathcal O_C$.  We've used the more complicated notation to emphasize the geometric origin of the obstructions.

\setcounter{lemma}{\value{theorem}}
\begin{lemma} \label{lem:T-triv}
Let $Q$ be a family of $2$-marked, semistable, rational curves over $S$.  Then the relative logarithmic tangent bundle $T_{Q/S}$ is canonically trivial.
\end{lemma}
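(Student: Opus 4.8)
The plan is to produce a canonical nowhere-vanishing section of $T_{Q/S}$ from the fiberwise $\Gm$-action that is built into the very construction of $Q$, and then to identify this section with the standard trivialization of the logarithmic tangent bundle of a toric curve. Since $Q \to S$ is logarithmically smooth of relative dimension one, $T_{Q/S}$ is a line bundle, so it suffices to exhibit one canonical global generator.

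First I would reduce to a local statement on $S$. The claim is local in the strict \'etale topology, so I may assume the tropical line $\mathcal P$ underlying $Q$ is isomorphic to $\tropGm$; by Proposition~\ref{prop:chain} this presents $Q$ as $P_\gamma = \mathcal P_\gamma \mathop\times_{\tropGm} \logGm$, a chain of $2$-marked rational curves. By Proposition~\ref{prop:univ-torsor}, $\logGm$ is the universal $\Gm$-torsor over $\tropGm$, hence carries a tautological $\Gm$-action; pulling it back endows $P_\gamma$ with a fiberwise $\Gm$-action over $S$ (the same action exhibiting $\mathcal P_\gamma = [P_\gamma/\Gm]$ in the proof of Proposition~\ref{prop:chain}). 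Differentiating the action yields a canonical map $\mathcal O_Q = \mathrm{Lie}(\Gm) \otimes \mathcal O_Q \to T_{Q/S}$, that is, a distinguished relative logarithmic vector field $\xi$.

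Next I would check that $\xi$ generates $T_{Q/S}$ by a local toric computation. Away from the nodes each component is $\mathbf P^1$ with its toric logarithmic structure at the two torus-fixed points, where $\xi$ is the Euler field $z\,\partial_z$; this is the standard generator of $T^{\log}_{\mathbf P^1}(-\log\{0,\infty\}) \cong \mathcal O$. At a node the local model is $xy = \delta$ with $\delta \in \overline M_{S,s}$, and the action scales $x \mapsto \lambda x$, $y \mapsto \lambda^{-1} y$; here $\xi$ is dual to the generator $d\log\alpha = -d\log\beta$ of the relative logarithmic cotangent space (with $\alpha,\beta$ the branch parameters satisfying $\alpha + \beta = \delta$), so it again generates. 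Uniformly: $Q$ is relatively toric over $S$, the relative logarithmic tangent bundle of a toric curve is canonically $N \otimes \mathcal O_Q$ with $N$ the cocharacter lattice, and for $\Gm$ we have $N = \mathbf Z$, so $\xi$ is precisely the generator coming from $N$. Thus $\xi$ is nowhere vanishing and $1 \mapsto \xi$ defines an isomorphism $\mathcal O_Q \xrightarrow{\sim} T_{Q/S}$.

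Finally I would argue canonicity. The identification $\mathcal P \simeq \tropGm$ was unique only up to translation by a section of $\overline M_S^{\rm gp}$, but such a translation alters $P_\gamma$ only by reindexing its $\Gm$-torsor structure and leaves the $\Gm$-action—and hence $\xi$—unchanged. Therefore the local trivializations glue to a single global, choice-free isomorphism $\mathcal O_Q \simeq T_{Q/S}$. The step requiring the most care, which I expect to be the main obstacle, is verifying that $\xi$ does not vanish at the nodes: the naive Euler field degenerates there in the \emph{schematic} tangent bundle, and it is only the logarithmic structure $xy = \delta$ that rescues it. Making the node computation precise, and compatible across the two branches and with the gluing of adjacent components, is the crux; the toric interpretation via $N \otimes \mathcal O_Q$ is the cleanest way to see it uniformly.
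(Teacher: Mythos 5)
Your proof is correct and is essentially the paper's own argument---in fact the paper gives two proofs, and yours fuses them: one observes that $Q$ is a division of a $\logGm$-torsor, so $T_{Q/S}$ is the Lie algebra of $\logGm$ twisted by the adjoint action, which is trivial because the group is commutative (this is exactly your differentiated $\Gm$-action); the other checks triviality on the local models $\mathbf A^1_S$ and $S \times_{\mathbf A^1} \mathbf A^2$ via the generator $dx/x$, which is your node computation. The only point worth adding is that in the gluing step the local identifications of $Q$ with $P_\gamma$ are ambiguous not only up to translation of $\mathcal P$ but also up to $\mathcal O_S^\ast$-scaling (the paper notes this explicitly); this is harmless in your setup, since that scaling is the $\Gm$-action itself and therefore commutes with the action and fixes $\xi$.
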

\begin{proof}
One way to see this is to note that $Q$ is a division of a logarithmic line bundle $P$.  Since $P$ is a torsor under $\logGm$, its tangent bundle is the lie algebra of $\logGm$ (which is isomorphic to $\mathcal O_S$) twisted by $P$, via the adjoint action.  But $\logGm$ is commutative, so the adjoint action is trivial, and $T_{Q/S} = T_{\logGm} = \mathcal O_Q$.

A more direct argument is to note that $Q$ is locally isomorphic to $\mathbf A^1_S$ or to the pullback of the multiplication map $\mathbf A^2 \to \mathbf A^1$ along a logarithmic morphism $S \to \mathbf A^1$.  In either case, these isomorphisms are uniquely determined up to scaling by elements of $\mathcal O_S^\ast$.  By we have canonical trivializations of the duals of $T_{\mathbf A^1}$ and $T_{\mathbf A^2 / \mathbf A^1}$ given by $\frac{dx}{x}$.  These trivializations are independent of scaling by $\mathcal O_S^\ast$ and can be seen easily to glue to a global trivialization of $T_{Q/S}$.
\end{proof}

The construction of the obstruction theory in Proposition~\ref{prop:gv-obs} is standard, but in order to compare this obstruction theory to the one from Sections~\ref{sec:second-obs} and~\ref{sec:cons-obs}, it will be necessary to analyze exactly how an obstruction arises from a deformation problem~\eqref{eqn:11}:
\setcounter{equation}{\value{lemma}}
\begin{equation} \label{eqn:11} \vcenter{\xymatrix{
	S \ar[r] \ar[d] & \Rub_{g,\mathbf a}(\mathcal M_{g,n}^{\log}, \omega^{k}) \ar[d] \\	
	S' \ar[r] \ar@{-->}[ur] & \Mlog_{g,n} \times \mathfrak M_{0,2}^{\rm ss}.
}} \end{equation}
The map $S \to \Rub_{g,\mathbf a}(\mathcal M_{g,n}^{\log}, \omega^{k})$ gives a semistable logarithmic curve $C$ over $S$, a semistable family of $2$-marked, rational logarithmic curves $Q$ over $S$, and a section $f$ of $\omega_{C/S}^{-k} \otimes Q$ over $C$.  We write $\mathcal Q = [Q/\Gm]$ and $\alpha$ for the composition $C \to \omega_{C/S}^{-k} \otimes Q \to \pi^{-1} \mathcal Q$.  

The map $S' \to \Mlog_{g,n} \times \mathfrak M_{0,2}^{\rm ss}$ gives a semistable logarithmic curve $C'$ over $S'$ extending $C$ and a family of $2$-marked, semistable, rational curves $Q'$ over $S'$ extending $Q$.  Since $Q'$ is logarithmically smooth, there is no local obstruction to finding an arrow $C' \to \omega_{C'/S'}^{-k} \otimes Q'$ in diagram~\eqref{eqn:13}:
\begin{equation} \label{eqn:13} \vcenter{\xymatrix@R=10pt@C=8em{
	C \ar[r] \ar[dd] & \omega_{C'/S'}^{-k} \otimes \pi^{-1} Q' \ar[d]  \\ 
	& \pi^{-1} \mathcal Q'  \ar[r] \ar[d] & \mathcal Q' \ar[d] \\
	C' \ar[r] \ar@{-->}[ur] \ar@{-->}[uur] \ar@{-->}[urr] & C' \ar[r]^{\pi} & S'.
}} \end{equation}
Choices of lift form a torsor under $f^\ast T_{(\omega_{C/S}^{-k} \otimes Q) / C}$, and the class of this torsor in $\mathrm R^1 \pi_\ast f^\ast T_{(\omega_{C/S}^{-k} \otimes Q}$ serves as the obstruction.  This proves Proposition~\ref{prop:gv-obs}, but we analyze the obstruction a bit further to show it agrees with the obstruction from Section~\ref{sec:cons-obs}.

We note now that because $\omega_{C/S}^{-k} \otimes Q$ is a family of $2$-marked, rational, semistable curves over $S$, its logarithmic tangent bundle $T_{(\omega_{C/S}^{-k} \otimes Q) / C}$ is canonically trivial by Lemma~\ref{lem:T-triv}.  Therefore the obstruction complex here is canonically isomorphic to $\mathrm R \pi_\ast \mathcal O_C$.  In order to compare this obstruction theory to the one from Sections~\ref{sec:second-obs} and~\ref{sec:cons-obs}, we will need to show that the map
\begin{equation*}
\mathrm R \pi_\ast f^\ast T_{\omega_{C/S}^{-k} \otimes Q / C}[1] \simeq \mathrm R \pi_\ast \mathcal O_C[1] \to \mathrm R^1 \pi_\ast \mathcal O_C
\end{equation*}
carries the obstruction constructed here to the one constructed in Section~\ref{sec:cons-obs}.

Recall that if $\alpha'$ is the piecewise linear function on the dual graph of $C'$ classified by $S' \to \Rub_{g,\mathbf a}$, and $\alpha$ is its restriction to $C$, then the obstruction constructed in Section~\ref{sec:cons-obs} is the torsor of trivializations of $\omega_{C'/S'}^{-k}(\alpha')$ lifting the trivialization of $\omega_{C/S}^{-k}(\alpha)$ specified by $S \to \Rub_{g,\mathbf a}(\mathcal M_{g,n}^{\log},\omega^k)$.  

 %for the unique extension of $\mathcal Q$ to $S'$.  Set $R = Q\mathop\times_{\mathcal Q}C$ and $R' = Q' \mathop\times_{\mathcal Q'}C'$.  There is a \emph{unique} lift $\alpha' : C' \to \mathcal Q'$ as in diagram~\eqref{eqn:13}, because $\mathcal Q'$ is logarithmically \'etale over $S'$.  Therefore, lifting~\eqref{eqn:13} is equivalent to finding a section of $\omega_{C'/S'}^{-k} \otimes R'$ over $C'$ extending a given section of $\omega_{C/S}^{-k} \otimes R$.  Moreover, such an extension exists locally, and the choices of this extension form a torsor under $f^\ast T_{(\omega_{C/S}^{-k} \otimes R) / C}$.

We write $\mathcal Q = [Q/\Gm]$ and $\mathcal Q' = [Q'/\Gm]$.  By definition, the obstruction constructed in this section is the torsor of lifts $C' \to \omega_{C'/S'}^{-k} \otimes Q'$ of~\eqref{eqn:13} extending the given map over $C$.  However, $\mathcal Q'$ is logarithmically \'etale over $S'$, so there is a unique extension $C' \to \mathcal Q'$ of $C \to \mathcal Q$ as in Diagram~\eqref{eqn:13}.  Therefore lifting~\eqref{eqn:13} is equivalent to lifting
\begin{equation} \label{eqn:27} \vcenter{ \xymatrix{
C \ar[r] \ar[d] & \omega_{C'/S'}^{-k} \otimes Q' \ar[d] \\
C' \ar[r] & \pi^{-1} \mathcal Q'
}} \end{equation}
Now, $Q'$ is a $\Gm$-torsor over $\mathcal Q'$.  In fact, by Proposition~\ref{prop:univ-torsor}, we know that $Q' = \mathcal O_{C'}^\ast(\alpha')$.  Therefore the torsor of lifts of~\eqref{eqn:27} is the same as the torsor of trivializations of $\omega_{C'/S'}^{-k}(\alpha')$, which (as we noted above) is the obstruction constructed in Section~\ref{sec:cons-obs}.

This completes the proof of Theorem~\ref{thm:vfc-text}.

\subsubsection{Deforming the logarithmic structure}
\label{sec:filtration}

Graber and Vakil do not work in the logarithmic category, and the map~\eqref{eqn:10} is not strict.  Therefore the first obstruction to lifting a diagram~\eqref{eqn:11} of \emph{schemes} is to promote it to a diagram in the category of \emph{logarithmic schemes}, in which $S \to S'$ is strict and $S$ has the logarithmic structure pulled back from $\Rub_{g,\mathbf a}$.

\begin{remark}
We will see that the space of promotions alluded to above is a birational modification of the base.  Since we know that the fundamental class should pull back to the fundamental class under a birational modification, we don't actually need an obstruction theory for this map.  The theory of logarithmic obstruction theories and virtual fundamental classes formalizes this.
\end{remark}

\begin{remark}
It is possible to combine the obstruction theory presented in this section with the logarithmic obstruction theory in Section~\ref{sec:gv-obs} and recognize them as filtered pieces of a $1$-step obstruction theory.  However, it seems difficult to describe the obstruction in a natural way, owing to the very different sources of the two types of obstructions (one being local to $S$ and the other being local to $C$).  Li dealt with a related issue in his moduli space of stable maps to expanded targets~\cite{Li1,Li2} (see also \cite{CMW}).

One advantage of working with logarithmic obstruction theories is that they frequently allow us to bypass such issues.
\end{remark}

To emphasize that this problem is purely about deforming logarithmic strctures, consider the factorization~\eqref{eqn:16} of~\eqref{eqn:10}:
\setcounter{equation}{\value{theorem}}
\begin{equation} \label{eqn:16}
\Rub_{g,\mathbf a}(\mathcal M_{g,n}^{\log}, \omega^k) \to \Rub_{g,n} \times \BGm \to \Mlog_{g,n} \times \mathscr T \times \BGm \simeq \Mlog_{g,n} \times \mathfrak M_{0,2}^{\rm ss}.
\end{equation}
The first map sends an $\alpha : C \to \mathcal Q$ such that $\mathcal O_C(\alpha)$ differs from $\omega_C^k$ by a line bundle pulled back from the base to $\alpha : C \to \mathcal Q$ and the line bundle that pulls back to $\omega_C^{-k}(\alpha)$.  The second map is logarithmically \'etale, so our logarithmic obstruction theories over $\Mlog_{g,n} \times \mathfrak M_{0,2}^{\rm ss}$ are also obstruction theories over $\Rub_{g,n} \times \BGm$.  We will describe a schematic obstruction theory for the second map.

Consider a \emph{schematic} lifting problem~\eqref{eqn:17}, in which $S'$ is a square-zero extension of $S$:
\begin{equation} \label{eqn:17} \vcenter{\xymatrix{
	S \ar[r] \ar[d] & \undernorm\Rub_{g,\mathbf a} \times \BGm \ar[r] \ar[d] & \undernorm\Rub_{g,\mathbf a} \ar[d] \\
	S' \ar[r] \ar@{-->}[ur] \ar@{-->}[urr] & \overnorm {\mathcal M}_{g,n} \times \undernorm{\mathscr T} \times \BGm \ar[r] & \overnorm {\mathcal M}_{g,n} \times \undernorm{\mathscr T}.
}} \end{equation}
As usual, underlines denote underlying algebraic stacks.  It is equivalent to produce either dashed arrow, so we focus on the outer rectangle.

We write $M_S$ for the logarithmic structure of $S$ pulled back from $\Rub_{g,\mathbf a}$, and $M_{S'}$ for the logarithmic structure of $S'$ pulled back from $\mathcal M^{\log}_{g,n} \times \mathscr T$.  If the dashed arrow exists, it gives a second logarithmic structure $M'_{S'}$ on $S'$ and a factorization
\begin{equation} \label{eqn:18}
(S, M_S) \to (S', M'_{S'}) \to (S', M_{S'})
\end{equation}
in which the first arrow is strict.  Conversely, any such factorization gives a lift of~\eqref{eqn:17} because $\Rub_{g,\mathbf a} \to \Mlog_{g,n} \times \mathscr T$ is logarithmically \'etale.  Thus, lifting~\eqref{eqn:17} is equivalent to finding a factorization~\eqref{eqn:18}.  

Working locally in $S$, we describe an obstruction complex controlling these lifts.  We may assume, without loss of generality, that $S$ has a geometric point such that the maps
\begin{gather*}
\Gamma(S, \overnorm M_S) \to \overnorm M_{S,s} \\
\Gamma(S', \overnorm M_{S'}) \to \overnorm M_{S',s}
\end{gather*}
are both bijections. 

Let $\mathfrak f_s : \mathfrak C_s \to \mathfrak Q_s$ be the tropicalization of the map $f_s : C_s \to Q_s$ (see \cite[Section~7]{CCUW}).  The structure of the monoids $\overnorm M_{S,s}$ and $\overnorm M_{S',s}$ can be read off from $\mathfrak f_s$.  The monoid $\overnorm M_{S,s}$ is freely generated by the lengths $\delta(e)$ as $e$ ranges among the edges of $\mathfrak C_s$ and $\mathfrak Q_s$.  The monoid $\overnorm M_{S',s}$ is the quotient of $\overnorm M_{S,s}$ in which $k \delta(e)$ is identified with $\delta(e')$ if $e$ is an edge of $\mathfrak C_s$ that maps to an edge $e'$ of $\mathfrak Q_s$ with expansion factor $k$.

The monoid of relations on $\overnorm M_{S',s}$ defining $\overnorm M_{S,s}$ is freely generated by the relations $k\delta(e) \sim \delta(e')$, described above.  To build $M'_{S'}$, we must therefore choose an identification $\mathcal O_{S'}(k\delta(e)) \simeq \mathcal O_{S'}(\delta(e'))$ for each of these relations, extending the identity $\mathcal O_S(k\delta(e)) = \mathcal O_S(\delta(e'))$.  The choices for each of these identifications form a torsor under~$\mathcal O_S$.

Once isomorphisms $\varphi_e : \mathcal O_{S'}(k\delta(e)) \to \mathcal O_{S'}(e')$ have been chosen, we have an obstruction, coming from the need for the maps
\begin{gather*}
\varepsilon_{k\delta(e)} : \mathcal O_{S'} \to \mathcal O_{S'}(k\delta(e)) \\
\varepsilon_{\delta(e')} : \mathcal O_{S'} \to \mathcal O_{S'}(\delta(e')) 
\end{gather*}
to agree, namely for the difference $\varepsilon_{\delta(e')} - \varphi_e \varepsilon_{k\delta(e)}$ to vanish.  Since this difference already vanishes over $S$, it can be regarded as a map
\begin{equation*}
\mathcal O_S \to \mathcal O_S(\delta(e')) .
\end{equation*}
We therefore obtain a tangent-obstruction complex. This is summarized by the following proposition.

\setcounter{theorem}{\value{equation}}
\numberwithin{equation}{theorem}
\begin{proposition} \label{prop:gv-obs1}
The map $\Rub_{g,n} \to \mathcal M^{\log}_{g,n} \times \mathscr T$ has a \emph{schematic} relative tangent-obstruction complex
\begin{equation} \label{eqn:28}
\prod_{e \to e'} \mathcal O_S \to \prod_{e \to e'} \mathcal O_S(\delta(e'))
\end{equation}
where the products are taken over edges of $\mathfrak C_s$ mapping to edges of $\mathfrak Q_s$.  The differential sends $\varphi_e$ in the $e \to e'$ factor to $\varepsilon_{\delta(e')} - \varphi_e \varepsilon_{k\delta(e)}$ in the same factor of the target.
\end{proposition}

\begin{remark}
In fact, $\Rub_{g,n} \to \mathcal M^{\log}_{g,n} \times \mathscr T$ is a local complete intersection morphism and~\eqref{eqn:28} is its relative tangent complex.  In the Behrend--Fantechi formalism for obstruction theories, the obstruction theory is encoded by an isomorphism from the dual of~\eqref{eqn:28} to the relative cotangent complex.
\end{remark}

\subsubsection{Comparison to Graber and Vakil's obstruction theory}
\label{sec:gv-comparison}

For simplicity, we will work on $\Rub_{g,n}(\mathcal M_{g,n}^{\log}, \mathcal O)$.  By Proposition~\ref{prop:rub-stab}, we can identify $\Rub_{g,n}(\mathcal M_{g,n}^{\log}, \mathcal O)$ with the space of stable (logarithmic) maps from logarithmic curves $C$ to $2$-marked, semistable, rational curves $Q$. 

Given such a map $f : C \to Q$, Graber and Vakil define a sheaf $f^\dagger T_Q(\log D)$ as the quotient of $f^\ast T_{\undernorm Q}(\log D)$ by its torsion subsheaf (where $D$ is the divisor at $0$ and $\infty$ of $Q$ and $T_{\undernorm Q}$ is the tangent sheaf of the underlying scheme of $Q$).

\begin{proposition} \label{prop:TQ}
There is an exact sequence
\begin{equation*} %\label{eqn:22}
0 \to f^\ast T_Q \to f^\dagger T_Q \to \prod_{e \to e'} \mathcal O_S \to 0
\end{equation*}
where the product is taken over the nodes $e$ of $C$ that are carried by $f$ to nodes of $Q$.
\end{proposition}
\begin{proof}
Away from the nodes of $C$ that map to nodes of $Q$, the map $f^\ast T_Q \to f^\dagger T_Q$ is an isomorphism, so we work in a neighborhood of such a node $e$ with image $e'$.  Let $x$ and $y$ be parameters for the branches of $Q$ at $e'$ and let $v$ and $w$ be coordinates of $C$ at the branches of $e$.  After exchanging $v$ and $w$ if necessary, $x$ pulls back to $v^k$ and $y$ pulls back to $w^k$.

We can identify $T_{\undernorm Q}$ as the set of derivations from $\mathcal O_{\undernorm Q}$ into itself.  It is generated by the tangent vector fields $\delta_x$ and $\delta_y$ where
\begin{align*}
\delta_x(dx) & = x  & \delta_y(dx) & = 0 \\
\delta_x(dy) & = 0  & \delta_y(dy) & = y
\end{align*}
with the relations $y \delta_x = x \delta_y = 0$.  Then $f^\dagger T_{Q}$ is generated by $\delta_x$ and $\delta_y$ over $\mathcal O_C$, with the relations $w \delta_x = v \delta_y = 0$.

Now, $T_Q$ is freely generated by $\delta_{\log x}$, where $\delta_{\log x}(d \log x) = 1$ (and $\delta_{\log x}(d \log y) = -1$).  We note that $\delta_{\log x}(dx) = x$ and $\delta_{\log x}(dy) = -y$ so the map $T_Q \to T_{\undernorm Q}$ sends $\delta_{\log x}$ to $\delta_x - \delta_y$.

As $v \delta_x = v (\delta_x - \delta_y)$ and $w \delta_y = w (\delta_x - \delta_y)$ in $f^\dagger T_Q$, it follows that $f^\dagger T_Q / f^\ast T_Q$ is a skyscraper sheaf of length~$1$ supported at $e$ and spanned by $\delta_x \equiv \delta_y$.  On the other hand, $x$ is well-defined up to multiplication by a unit, and we argue that $\delta_x \bmod{f^\ast T_Q}$ is independent of this ambiguity.

Indeed, suppose that $u x$ is another local parameter for the same component as is $x$ (so $u$ is an invertible function).  We have $\delta_{ux}(dx) = x - u^{-1} x \delta_{ux}(du)$, so
\begin{align*}
(\delta_{u x} - \delta_x)(dx) & = - u^{-1} x \delta_{u x}(du) \\
(\delta_{u x} - \delta_x)(dy) & = 0 .
\end{align*}
There is a unique logarithmic tangent vector field $\gamma$ such that $\gamma(d \log x) = - u^{-1} \delta_{u x}(du)$.  Then
\begin{align*}
\gamma(dx) & = x \gamma(d \log x) = - u^{-1} x \delta_{u x}(du) \\
\gamma(dy) & = -y \gamma(d \log x) = u^{-1}  y \delta_{u x}(du) = 0 ,
\end{align*}
with the last equality because $\delta_{u x}(du)$ is a multiple of $x$.  Therefore $\delta_{u x} - \delta_x$ is in $f^\ast T_Q$ and the image of $\delta_x$ in $f^\dagger T_{\undernorm Q}$ is a canonical generator.
\end{proof}

We can now prove Theorem~\ref{thm:rubbervfc}.  In \cite[p.~10, Equation~(3)]{GV}, Graber and Vakil find a primary obstruction to deforming $C \to Q$ in
\begin{equation*}
H^0\bigl(C, f^{-1} \mathit{Ext}^1(\Omega_{\undernorm Q}(\log D), \mathcal O_Q) \bigr) = \prod_{e \to e'} \mathcal O_S(\delta(e')),
\end{equation*}
which is the product of the deformation spaces of the nodes $e'$ of $Q$, and therefore coincides with $\prod_{e \to e'} \mathcal O_S(\delta(e'))$, the second term of the obstruction complex in Proposition~\ref{prop:gv-obs1}.  There is a secondary obstruction\footnote{There is a typographical error in Equation (2) of loc.\ cit.: the secondary obstruction group is $H^1(C, f^\dagger T_{X_l}(-\log D_\infty))$, as in Equation~(3), and not $H^1(C, f^\ast T_{X_l}(-\log D_\infty))$.} in 
\begin{equation*}
H^1(C, f^\dagger T_Q),
\end{equation*}
which is the image of the obstruction space $H^1(C, f^\ast T_Q) = H^1(C, \mathcal O_C)$ from Proposition~\ref{prop:gv-obs}.  When these obstructions vanish, the deformations are a torsor under
\begin{equation*}
H^0(C, f^\dagger T_Q) .
\end{equation*}
Therefore the class in $K$-theory of Graber and Vakil's obstruction theory is the following virtual bundle:
\begin{equation*} 
\prod_{e \to e'} \mathcal O_S(\delta(e')) - \mathrm R \pi_\ast f^\dagger T_Q
\end{equation*}
On the other hand, the factorization
\begin{equation*}
\underline{\Rub}_{g,n}(\mathcal M_{g,n}^{\log},\mathcal O) \to \underline{\Rub}_{g,n} \to \frak M_{g,n}^{\log} \times \mathscr T
\end{equation*}
gives another $2$-step obstruction theory.  By Propositions~\ref{prop:gv-obs},~\ref{prop:gv-obs1}, and~\ref{prop:TQ} the $K$-theory class of this obstruction theory is the following sum:
\begin{align*} 
& \prod_{e \to e'} \mathcal O_S(\delta(e')) - \prod_{e \to e'} \mathcal O_S - \mathrm R \pi_\ast f^\ast T_Q  \\
= & \prod_{e \to e'} \mathcal O_S(\delta(e')) - \prod_{e \to e'} \mathcal O_S - \mathrm R \pi_\ast f^\dagger T_Q + \prod_{e \to e'} \mathcal O_S \\
= & \prod_{e \to e'} \mathcal O_S(\delta(e')) - \mathrm R \pi_\ast f^\dagger T_Q \notag
\end{align*}
This completes our comparison to Graber and Vakil's construction, and the proof of Theorem~\ref{thm:rubbervfc}.

\begin{remark}
One could consider a scheme $S$ equipped with a family of nodal curves $C$ and an invertible sheaf $L$ on $C$ such that the associated map $S \to \Pic_{g,n}$ has a perfect relative obstruction theory.  This gives a perfect relative obstruction theory for the map $\underline{\Rub}_{g,n}(C,L) \to \underline{\Rub}_{g,n}$.  Combined with Proposition~\ref{prop:gv-obs1}, this gives a $2$-step obstruction theory for the map $\underline{\Rub}_{g,n}(C,L) \to \frak M_{g,n}^{\log} \times \mathscr T$.
\end{remark}

\bibliographystyle{amsalpha}
\bibliography{rubjac}

\end{document}